\newcolumntype{?}{!{\vrule width 2\arrayrulewidth}}
\newcommand{\ba}{{\bf a}}
\newcommand{\bd}{{\bf d}}
\newcommand{\bef}{{\bf f}}
\newcommand{\bg}{{\bf g}}
\newcommand{\bn}{{\bf n}}
\newcommand{\bu}{{\bf u}}
\newcommand{\bv}{{\bf v}}
\newcommand{\bw}{{\bf w}}
\newcommand{\bx}{{\bf x}}
\newcommand{\by}{{\bf y}}
\newcommand{\bA}{{\bf A}}
\newcommand{\bB}{{\bf B}}
\newcommand{\bC}{{\bf C}}
\newcommand{\bF}{{\bf F}}
\newcommand{\bG}{{\bf G}}
\newcommand{\bI}{{\bf I}}
\newcommand{\bR}{{\bf R}}
\newcommand{\bS}{{\bf S}}
\newcommand{\bT}{{\bf T}}
\newcommand{\bU}{{\bf U}}
\newcommand{\bW}{{\bf W}}
\newcommand{\bnu}{{\boldsymbol \nu}}
\newcommand\strain[1]{\ensuremath \varepsilon(#1)}
\renewcommand\div[1]{\ensuremath \mathrm{div}(#1)}
\newcommand\tr[1]{\ensuremath \mathrm{Tr}(#1)}
\newtheorem{thm}{Theorem}[section]
\newtheorem{lem}[thm]{Lemma}
\newtheorem{prop}[thm]{Proposition}
\newtheorem{rem}[thm]{Remark}
\newcommand{\Bk}{\color{black}}
\newcommand{\Rd}{\color{red}}
\definecolor{aggiemaroon}{rgb}{0.8,0,0}
\newcommand\bLambda{ {\boldsymbol \Lambda} }
\newcommand\bTheta{ {\boldsymbol \Theta} }
\newcommand\bel{ {\boldsymbol \ell} }
\newcommand\dd{\,\mathrm{d}}
\def\Xint#1{\mathchoice
{\XXint\displaystyle\textstyle{#1}}%
{\XXint\textstyle\scriptstyle{#1}}%
{\XXint\scriptstyle\scriptscriptstyle{#1}}%
{\XXint\scriptscriptstyle\scriptscriptstyle{#1}}%
\!\int}
\def\XXint#1#2#3{{\setbox0=\hbox{$#1{#2#3}{\int}$ }
\vcenter{\hbox{$#2#3$ }}\kern-.59\wd0}}
\numberwithin{equation}{section}
\title[Finite Element Approximation of a Strain-Limiting Elastic Model]{Finite Element Approximation of a\\Strain-Limiting Elastic Model}
\author{Andrea Bonito}\address{Andrea Bonito, Texas A\&M University, Department of Mathematics, 3368 TAMU, College Station, TX 77843-3368, USA. email: \texttt{bonito@math.tamu.edu};
}
\thanks{Andrea Bonito is supported in part by NSF grant DMS-1817691.}
\author{Vivette Girault}\address{Vivette Girault, Universite Pierre et Marie Curie, Paris VI, Laboratoire Jacques-Louis Lions,
4, place Jussieu, F-75230 Paris Cedex 05, France. email: \texttt{girault@ann.jussieu.fr};}
\author{Endre S\"uli}\address{Endre S\"uli, Mathematical Institute, University of Oxford, Andrew Wiles Building, Woodstock Road, Oxford OX2 6GG, United Kingdom. email: \texttt{suli@maths.ox.ac.uk}.}
\begin{document}

\begin{abstract}
We construct a finite element approximation of a strain-limiting elastic model on a bounded open domain in $\mathbb{R}^d$, $d \in \{2,3\}$. The sequence of finite element approximations is shown to exhibit strong convergence to the unique weak solution of the model. Assuming that the material parameters featuring in the model are Lipschitz-continuous, and assuming that the weak solution has additional regularity, the sequence of finite element approximations is shown to converge with a rate. An iterative algorithm is constructed for the solution of the system of nonlinear algebraic equations that arises from the finite element approximation. An appealing feature
of the iterative algorithm is that it decouples the monotone and linear elastic parts of the nonlinearity in the model. In particular, our choice of piecewise constant approximation for the stress tensor (and continuous piecewise linear approximation for the displacement) allows us to compute the monotone part of the nonlinearity by solving an algebraic system with $d(d+1)/2$ unknowns independently on each element in the subdivision of the computational domain. The theoretical results are illustrated by numerical experiments.
\end{abstract}

\maketitle

\noindent
\textit{Keywords:} Strain-limiting elastic model, finite element method, convergence, decoupled iterative method

\smallskip

\noindent
\textit{2010
Mathematics Subject Classification.}  Primary 65N30; Secondary 74B20

~\vspace{3cm}

\section{Introduction and statement of the problem}

\label{sec:Intro}

Until recently, the term \textit{elasticity} referred to \textit{Cauchy elasticity}, and within such a theory, strain-limiting models are not possible. Motivated by the work of Rajagopal in~\cite{Raj2003}, see also~\cite{Raj2007}, the objective of this
paper is to design, analyze and implement numerical approximations of models that fall outside the realm of classical Cauchy elasticity. These models are implicit and nonlinear, and are referred to as strain-limiting, because they permit the linearized strain to remain bounded even when the stress is very large: a property that cannot be guaranteed within the framework of standard elastic or nonlinear elastic models.

On a bounded  domain $\Omega \subset \mathbb R^d$, $d \in \{2, 3\}$, and for a given external force $\bef : \Omega \rightarrow \mathbb R^d$, we consider the nonlinear elastic model
\begin{equation}\label{e:div}
-\div{\bT} = \bef \qquad \textrm{in } \Omega,
\end{equation}
where the symmetric stress tensor $\bT$ is related to the strain tensor $\strain{\bu}:=\frac{1}{2}(\nabla \bu + (\nabla \bu)^{\rm T})$,
for a given displacement vector $\bu$, via a nonlinear constitutive relation of the form
\begin{equation}\label{e:strain_stress}
\strain{\bu} = \lambda( \tr{\bT} ) \tr{\bT} \bI + \mu(|\bT^{\bd}|)\bT^{\bd} \qquad \textrm{in } \Omega.
\end{equation}
Here $\lambda \in \mathcal{C}^0(\mathbb R)$ and $\mu \in \mathcal{C}^0([0,+\infty))$ are given functions and $\bT^{\bd}$ denotes the deviatoric part of the tensor $\bT$, defined by
$$
\bT^{\bd} := \bT - \frac1d \tr{\bT}\bI.
$$
Additional assumptions on $\lambda$ and $\mu$ are required (see \eqref{e:cond_lambda1}--\eqref{e:cond_mu2} below), which guarantee that, in particular, the right-hand side of \eqref{e:strain_stress} is a monotone operator applied to $\bT$.
This strain-limiting model is used to describe, for example, the behavior of brittle materials in the vicinity of fracture tips, or in the neighborhood of concentrated loads, where there is concentration of stress even though the magnitude of
the strain tensor is limited. The model itself is derived and analyzed in the work of {Bul\'\i\v cek} {\it et al.}~\cite{BMRS14}; some of the ideas introduced in \cite{BMRS14} will also be used in the numerical analysis developed in the sequel.
Of course, there are several strain-limiting models: the reader will find other models in~\cite{BMRS14} and the references quoted therein.
This being the first effort though to construct and rigorously analyze a numerical algorithm for a strain-limiting elastic model,
we shall confine ourselves to the model \eqref{e:div}, \eqref{e:strain_stress}.

The analysis of the model \eqref{e:div}, \eqref{e:strain_stress} is far from trivial because the operator involved, although monotone, lacks coercivity. The authors of~\cite{BMRS14} show the existence of a weak solution to the problem by
first regularizing \eqref{e:strain_stress} with the addition of an appropriate coercive term,
$$
\frac{\tr{\bT} \bI}{n|\tr{\bT}|^{1-\frac{1}{n}}} + \frac{\bT^{\mathbf d}}{n|\bT^{\mathbf d}|^{1-\frac{1}{n}}},
$$
see \eqref{e:weak_reg}, eventually providing a control of $\bT$ in $L_{1+\frac 1 n}(\Omega)^{d \times d}$.
It is then shown in~\cite{BMRS14} that, as $n\to \infty$,
the limit of the sequence of solutions to the regularized system satisfies the original problem.
This nonlinear regularization is necessary in order to be able to cope with possibly rough data $\bef$.
However, for smoother data, the simpler \emph{linear} regularization
$
\frac 1 n \bT
$
has been used in~\cite{BMRS14} to recover additional regularity of the solution; see  \eqref{e:weak_reg,m=1}.

%\smallskip

The same framework is used here in the discrete case. More precisely, the regularized problems \eqref{e:weak_reg} and \eqref{e:weak_reg,m=1} are discretized by means of a simple finite element scheme: for instance, on simplices,
by discontinuous piecewise ${\mathcal P}_0$ elements for the each component of the stress tensor $\bT$, and globally continuous, piecewise ${\mathcal P}_1$ elements
for each component of the displacement vector $\bu$; see \eqref{e:discrete_reg} and \eqref{e:weak_regh,m=1}.
It is worth noting here that for quadrilateral subdivisions of the domain $\Omega$, the corresponding $({\mathcal Q}_0,{\mathcal Q}_1)$ stress/displacement pair of finite element spaces is (inf-sup) unstable,
and discontinuous polynomials of degree 1 in each direction should be selected for the stress approximations instead of ${\mathcal Q}_0$ elements so as
to restore (inf-sup) stability; see Sections~\ref{subsec:otherfit} and~\ref{subsec:notheory}.
Convergence to the exact solution is established by first passing to the limit as the mesh-size tends to zero, for a fixed value of the regularization parameter $n$, and then we let $n$ tend to infinity.
For rough data,  the delicate part in the approximation of \eqref{e:weak_reg}  is the derivation of a suitable rate of convergence for the approximation error.
The difficulty stems from the lack of a meaningful error bound in a standard Lebesgue norm. Our analysis therefore relies on modular forms and associated Orlicz norms (see Theorem~\ref{thm:err.inequ} and the subsequent discussion).
For smoother data, the $\frac{1}{n}\mathbf{T}$ regularization mentioned above can be used, and the numerical analysis of \eqref{e:weak_regh,m=1}
is then somewhat simpler because estimates for the stress, for the regularized problem at least, are naturally obtained in $L_2(\Omega)^{d\times d}$ (see Theorem~\ref{thm:err.inequm=1}) instead of $L_1(\Omega)^{d \times d}$ (or $L_{1+\frac{1}{n}}(\Omega)^{d \times d}$).

The proposed finite element discretizations \eqref{e:discrete_reg} and \eqref{e:weak_regh,m=1} yield nonlinear systems with constraints.
Since the nonlinear operator is the sum of a monotone and a coercive operator, we take advantage of  the algorithm developed by Lions and
Mercier in~\cite{LM79} to decouple these two parts:  the unconstrained monotone system is solved first, followed by solving a constrained coercive system.
As the stress tensor is potentially discontinuous, its simplest possible discretization is, as was suggested above, by means of a piecewise constant
approximation on simplices; thus the associated nonlinearity can be resolved element-by-element.
We establish convergence of this splitting algorithm when applied to \eqref{e:weak_regh,m=1} (see Theorem~\ref{t:convLMm=1}).
When applied to \eqref{e:discrete_reg}, the rigorous proof of convergence of the splitting algorithm is an open problem,
although our numerical experiments at least appear to indicate that the splitting algorithm may well be convergent in this case as well.

%---------------------

\subsection{Setting of the problem}

\label{subsec:pbsetting}

We consider the system \eqref{e:div}, \eqref{e:strain_stress} and describe the assumptions required on $\lambda$ and $\mu$.
In addition to $\lambda \in \mathcal{C}^0(\mathbb R)$ and $\mu \in \mathcal{C}^0([0,+\infty))$, we assume that $s \in \mathbb R \mapsto \lambda(s)s \in \mathcal{C}^1(\mathbb R)$.
Complementing these regularity hypotheses, we assume that $\lambda$ and $\mu$ satisfy, for some positive constants $C_1, C_2, \kappa$ and $\alpha$, the following inequalities:
\begin{alignat}{2}
\frac{C_1 s^2}{\kappa + |s|} &\leq \lambda(s) s^2 \leq C_2 |s| \qquad &&\forall\, s \in \mathbb R;
\tag{A1} \label{e:cond_lambda1} \\
\frac{C_1 s^2}{\kappa + s} &\leq \mu(s) s^2 \leq C_2 s \qquad &&\forall\, s \in \mathbb R_{\geq 0}; \tag{A2} \label{e:cond_mu1} \\
0 &\leq \frac{\dd}{\dd s} (\lambda(s)s) \qquad &&\forall\, s \in \mathbb R;
\tag{A3} \label{e:cond_lambda2} \\
\frac{C_1}{(\kappa+s)^{\alpha+1}} &\leq \frac{\dd}{\dd s} (\mu(s)s) \qquad &&\forall\, s \in \mathbb R_{>0}.
\tag{A4} \label{e:cond_mu2}
\end{alignat}
We note that, using the continuity of $\lambda$, the first inequality in assumption \eqref{e:cond_lambda1} implies that
$\lambda(s) > 0$ when $s \not = 0$ and $\lambda(s)\geq 0$ for $s\in \mathbb R$.
In addition, using now the second inequality in \eqref{e:cond_lambda1} we have that
\begin{equation}
\label{e:lambdasbdd}
| \lambda(s) s | = \lambda(s) |s| \leq C_2 \qquad \forall\, s \in \mathbb R.
\end{equation}
The same argument applied to the function $\mu$ gives  $\mu(s) > 0$ when $s > 0$, and
 \begin{equation}
\label{e:musbdd}
\mu(s) \ge  0,\quad \mu(s)\,s \leq C_2 \qquad \forall\, s \in \mathbb R_{\geq 0}.
\end{equation}

In particular, these assumptions guarantee that the system will only exhibit finite strain (see Theorem~\ref{t:existence} below).
At this point, we also recall a result from \cite{MNRR96} (see also Lemma 4.1 in \cite{BMRS14}), which will play a crucial role in the subsequent analysis.

Under the assumptions \eqref{e:cond_lambda1}--\eqref{e:cond_mu2} stated above, there exists a positive constant $C$ such that the following inequalities hold for all $\bR_1, \bR_2 \in \mathbb R^{d\times d}_{\textrm{sym}}$ (the set of all $d \times d$ symmetric matrices with real-valued entries):
\begin{align}
&\left( \mu( |\bR_1|) \bR_1 - \mu( |\bR_2|) \bR_2 \right): (\bR_1 - \bR_2)
\geq C \frac{|\bR_1-\bR_2|^2}{(\kappa+|\bR_1|+|\bR_2|)^{1+\alpha}}; \label{e:mon_mu1}\\
&\left( \mu( |\bR_1|) \bR_1 - \mu( |\bR_2|) \bR_2 \right): (\bR_1 - \bR_2)
\geq C \left| (\kappa+|\bR_1|)^{\frac{1-\alpha}{2}}-  (\kappa+|\bR_2|)^{\frac{1-\alpha}{2}}\right|^2; \label{e:mon_mu2}\\
&\left( \lambda( \tr{\bR_1}) \tr{\bR_1} - \lambda( \tr{\bR_2}) \tr{\bR_2} \right) (\tr{\bR_1} - \tr{\bR_2}) \geq 0. \label{e:mon_lambda1}
\end{align}
If, in addition,
\begin{equation}
0 < \frac{\dd}{\dd s} (\lambda(s)s) \qquad \forall\, s \in \mathbb R,
\tag{A3'} \label{e:cond_lambda2bis}
\end{equation}
then, for all  $\bR_1, \bR_2 \in \mathbb R^{d\times d}_{\textrm{sym}}$ such that $\tr{\bR_1} \not = \tr{\bR_2}$, we have
\begin{equation}
\left( \lambda( \tr{\bR_1}) \tr{\bR_1} - \lambda( \tr{\bR_2}) \tr{\bR_2} \right) (\tr{\bR_1} - \tr{\bR_2}) > 0.\label{e:mon_lambda2}
\end{equation}

The system \eqref{e:div}, \eqref{e:strain_stress} is supplemented with the boundary conditions
$$
\bu = \bg \quad \textrm{on } \partial_D \Omega \qquad \textrm{and} \qquad \bT \bnu = \bel \quad \textrm{on } \partial_N \Omega,
$$
where the boundary of $\Omega$ is decomposed into two parts, $\partial_D \Omega$ and $\partial_N \Omega$, with $\partial_D \Omega \cap \partial_N \Omega = \emptyset$ and $\overline{\partial_D \Omega \cup \partial_N \Omega} = \partial \Omega$, $\bnu$ is the outward-pointing unit normal to $\partial \Omega$, $\bg: \partial \Omega \rightarrow \mathbb R^d$ is a given displacement on $\partial_D \Omega$, and  $\bel : \partial \Omega \rightarrow \mathbb R^d$ is a given traction force on $\partial_N \Omega$.

%--------------------

\subsection{Notation}

\label{subsec:notation}

We shall suppose for the rest of this section that $\Omega$ is a bounded simply connected John domain; see, for instance,~\cite{AcostaDuranMuschietti2006} or~\cite{Jiang}.
Henceforth, $L_p(\Omega)$ and $W^{k,p}(\Omega)$ will denote the standard Lebesgue and Sobolev
spaces, and the corresponding spaces of $d$-component vector-valued functions and symmetric $d \times d$-component
tensor-valued functions will be denoted, respectively, by $L_p(\Omega)^d$, $L_p(\Omega)^{d\times d}_{\rm sym}$
and $W^{k,p}(\Omega)^d$,  $W^{k,p}(\Omega)^{d\times d}_{\rm sym}$.
In order to characterize displacements that
vanish on the boundary, $\partial\Omega$, of $\Omega$, we consider for $p \in [1,\infty)$ the Sobolev space $W_0^{1,p}(\Omega)$,
defined as the closure of the linear space $\mathcal{C}^\infty_0(\Omega)$, consisting of infinitely many
times continuously differentiable functions with compact support in $\Omega$, in the norm of the space
$W^{1,p}(\Omega)$:
$$ W_0^{1,p}(\Omega) = \overline{\mathcal{C}^\infty_0(\Omega)}^{\|\cdot\|_{1,p}}.$$

We recall the Poincar\'e and Korn inequalities, which, for each $p \in (1,\infty)$, assert the existence of positive constants
$\mathcal S_p$ and $\mathcal K_p$, such that, respectively (cf. Theorem 1.5 in \cite{Jiang}),
\begin{alignat}{2}
\label{e:poincare}
\|v\|_{L_p(\Omega)} &\le \mathcal S_p\,\|\nabla v\|_{L_p(\Omega)}\qquad &&\forall\, v \in W^{1,p}_0(\Omega),\\
\label{e:Korn1}
\|\nabla \bv\|_{L_p(\Omega)} &\le \mathcal K_p\,\|\strain{\bv}\|_{L_p(\Omega)}\qquad &&\forall\, \bv \in W^{1,p}_0(\Omega)^d.
\end{alignat}
By combining inequalities \eqref{e:Korn1} and \eqref{e:poincare} we obtain the inequality
\begin{equation}
\label{e:korn}
\| \bv \|_{L_p(\Omega)} \leq C_{K,p} \| \strain{\bv} \|_{L_p(\Omega)} \qquad \forall\, \bv\in W^{1,p}_0(\Omega)^d,
\end{equation}
with $C_{K,p} = \mathcal S_p\,\mathcal K_p>0$.

For any two symmetric $d\times d$ tensors $\bS=(S_{ij})$ and $\bT=(T_{ij})$, we shall use a colon to denote their contraction product,
$$ \bS:\bT = \sum_{i=1}^d \sum_{j=1}^d S_{ij} T_{ij},
$$
so that the Frobenius norm of $\bS$ reads
$$|\bS|^2 = \bS:\bS = {\rm Tr}(\bS^2).
$$
It is then easy to show that
\begin{equation}
\label{e:normT0}
| \bS |^2 = |\bS^{\bd} |^2 +  \frac 1 d |\tr{\bS}|^2 \leq |\bS^{\bd} |^2 +  |\tr{\bS}|^2 \qquad \forall\, \bS \in \mathbb R^{d\times d}_{\textrm{sym}},
\end{equation}
which implies that
\begin{equation}
\label{e:normT1}
| \bS | \leq |\bS^{\bd} | +  |\tr{\bS}|\qquad \forall\, \bS \in \mathbb R^{d\times d}_{\textrm{sym}}.
\end{equation}
Conversely,
\begin{equation}
\label{e:normT2}
| \tr{\bS} | + | \bS^{\bd} |  \leq \sqrt{2d}\, | \bS | \qquad \forall\, \bS \in \mathbb R^{d\times d}_{\textrm{sym}},
\end{equation}
since, by elementary inequalities and by noting the equality stated in \eqref{e:normT0},
$$
| \tr{\bS} | + | \bS^{\bd} | \leq \sqrt{d} \left( \frac{1}{\sqrt{d}}| \tr{\bS}| +  | \bS^{\bd} | \right) \leq
\sqrt{2d} \left( \frac{1}{d}| \tr{\bS}|^2 +  | \bS^{\bd} |^2 \right)^{\frac{1}{2}} = \sqrt{2d}\, | \bS |.
$$
Moreover, for any nonnegative real numbers $a$ and $b$, and for any $p\geq 1$ and $\theta \in (0,1]$, we have
\[ a^p + b^p \geq 2^{1-p}  (a+b)^p \geq 2^{1-p}  (a^2 + b^2)^{\frac{p}{2}} \geq 2^{1-p}  (\theta a^2 +  b^2)^{\frac{p}{2}}.\]
Thus, by taking $a=|\tr{\bS}|$, $b=|\bS^{\bd}|$ and $\theta =1$, we have by \eqref{e:normT0} that,
for any $p \geq 1$,
\begin{equation}\label{e:pbound}
2^{1-p} | \bS |^{p} \leq | \tr{\bS} |^p + | \bS^\bd |^p.
\end{equation}
%

%\sout{Henceforth, $L_p(\Omega)$ and $W^{k,p}(\Omega)$ will denote the standard Lebesgue and Sobolev
%spaces, and the corresponding spaces of $d$-component vector-valued functions and symmetric $d \times d$-component
%tensor-valued functions will be denoted, respectively, by $L_p(\Omega)^d$, $L_p(\Omega)^{d\times d}_{\rm sym}$
%and $W^{k,p}(\Omega)^d$,  $W^{k,p}(\Omega)^{d\times d}_{\rm sym}$.
%In order to characterize displacements that
%vanish on the boundary, $\partial\Omega$, of $\Omega$, we consider the Sobolev space $W_0^{1,p}(\Omega)$,
%defined as the closure of the linear space $\mathcal{C}^\infty_0(\Omega)$, consisting of infinitely many
%times continuously differentiable functions with compact support in $\Omega$, in the norm of the space
%$W^{1,p}(\Omega)$:}
%$$\msout{ W_0^{1,p}(\Omega) = \overline{\mathcal{C}^\infty_0(\Omega)}^{\|\cdot\|_{1,p}}.}$$
%
%\sout{We are now ready to state the weak formulation of the boundary-value problem under consideration.}

The remainder of this article is organized as follows. The problem is set into variational form in Section \ref{sec:Weakform} and the associated existence and uniqueness results are recalled. Sections \ref{sec:weakform} and \ref{sec:aprioriexct} are devoted to the analysis of the sequence of regularized problems \eqref{e:weak_reg}
 that will be discretized by finite elements in Section \ref{sec:FEM}; this includes a priori estimates, convergence, and identification of the limit. The simpler analysis of \eqref{e:weak_reg,m=1} is sketched in Section \ref{s:smoother}. In Section \ref{s:decoupled}, we present an iterative algorithm that dissociates the computation of the nonlinearity from the elastic constraint, and we prove its convergence when applied to \eqref{e:weak_regh,m=1}. In Section \ref{sec:experiment}, we report numerical experiments aimed at assessing the performance of the iterative algorithm and the discretization scheme.

%=============================

\section{Weak Formulation}

\label{sec:Weakform}

We begin by recalling Theorem 4.3 from \cite{BMRS14}, which  guarantees the existence and uniqueness
of a solution to the problem \eqref{e:div},  \eqref{e:strain_stress} in the case when
$\partial_D \Omega = \partial \Omega$ and $\bg = \mathbf 0$.

When the Neumann part of the boundary $\partial_N \Omega$ is nonempty, the structure of the solution is
potentially much more complicated. It was shown in \cite{ref:BBMS} that, in general, the solution in that
case belongs to the space of Radon measures, but if the problem is equipped with a so-called asymptotic radial
structure, then the solution can in fact be understood as a standard weak solution, with one proviso: the
attainment of the boundary value is penalized by a measure supported on $\partial_N \Omega$. For simplicity,
in this initial effort to construct a provably convergent numerical algorithm for the problem under consideration,
we shall therefore suppose henceforth that $\partial_D \Omega = \partial \Omega$ (i.e., $\partial_N \Omega = \emptyset$)
and that the Dirichlet boundary datum is $\bg = \mathbf 0$ on $\partial\Omega$.

\begin{thm}[Theorem 4.3 in \cite{BMRS14}]\label{t:existence}
Assume that $\partial_N \Omega = \emptyset$ and that $\lambda$, $\mu$ satisfy \eqref{e:cond_lambda1}--\eqref{e:cond_mu2} with $0 \leq \alpha < 1/d$;
then, the following statements hold:
\begin{enumerate}
\item Assume that $\bef = -\div{\bF}$ for  $\bF \in W^{\beta,1}(\Omega)^{d\times d}_{\rm sym}$ with $\beta \in (\alpha d,1)$. Then, there exists a pair
$(\bT, \bu)$, such that
\begin{align}
\bT &\in L_1(\Omega)^{d\times d}_{\rm sym},\nonumber\\
\bu &\in W^{1,p}_0(\Omega)^d \qquad \forall\, p \in [1,\infty),\nonumber\\
\strain{\bu} &\in L_{\infty}(\Omega)^{d\times d}_{\rm sym}\nonumber,
\end{align}
is a weak solution in the sense that it satisfies
\begin{equation}\label{e:div_weak}
\int_\Omega \bT : \strain{\bw}\dd\bx = \int_\Omega \bF : \strain{\bw} \dd\bx \qquad \forall\, \bw \in \mathcal D(\Omega)^d,
\end{equation}
where $\mathcal D(\Omega)^d:= C^\infty_0(\Omega)^d$,
and the nonlinear relationship between the strain $\strain{\bu}$ and the stress $\bT$ stated in \eqref{e:strain_stress} holds almost everywhere in $\Omega$;
\item Moreover, if $\Omega$ has a  continuous boundary, then the equality \eqref{e:div_weak} holds for all $\bw \in W^{1,1}_0(\Omega)^{d}$ such that $\strain{\bw} \in L_\infty(\Omega)^{d\times d}_{\rm sym}$;
\item In addition, $\bu$ is unique and if $\lambda$ satisfies the assumption \eqref{e:cond_lambda2bis},
then $\bT$ is also unique;
\item Furthermore, if $\bF$ belongs to $W^{2,2}(\Omega)^{d\times d}_{\rm sym}$, then $\bT \in W^{1,q}_\textrm{loc}(\Omega)^{d\times d}_{\rm sym}$ with
$$
q \left\lbrace
\begin{array}{ll}
:= 2- \frac{2(d-2)(1+\alpha)}{(d-2)(1+\alpha)+d(1-\alpha)}, &\quad \textrm{for }d \geq 3, \\
\in [1,2), &\quad \textrm{arbitrary for }d=2.
\end{array}\right.
$$
\end{enumerate}
\end{thm}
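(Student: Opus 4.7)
The strategy is the regularization approach already hinted at in the introduction: perturb \eqref{e:strain_stress} by a strictly monotone, coercive power-type term whose exponent tends to $1$, solve the regularized problem by monotone operator theory, and then pass to the limit. A direct attack on \eqref{e:strain_stress} fails because the right-hand inequalities in \eqref{e:cond_lambda1}--\eqref{e:cond_mu1} only yield an $L_\infty$ bound on the constitutive operator applied to $\bT$---which controls $\strain{\bu}$ but gives no integrability on $\bT$ itself.

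First, for each $n$, I would solve the regularized constitutive equation
\[
\strain{\bu_n} = \lambda(\tr{\bT_n})\tr{\bT_n}\bI + \mu(|\bT_n^{\bd}|)\bT_n^{\bd} + \frac{\tr{\bT_n}\bI}{n|\tr{\bT_n}|^{1-1/n}} + \frac{\bT_n^{\bd}}{n|\bT_n^{\bd}|^{1-1/n}},
\]
coupled with $-\div{\bT_n}=\bef$ and $\bu_n=\mathbf{0}$ on $\partial\Omega$, via Browder--Minty theory: the perturbed operator is strictly monotone and coercive on $L_{1+1/n}(\Omega)^{d\times d}_{\rm sym}$. Then I would derive $n$-uniform estimates: the bounds \eqref{e:lambdasbdd}--\eqref{e:musbdd} force $\strain{\bu_n}\in L_\infty$ pointwise, so Korn's inequality \eqref{e:Korn1} delivers a $W^{1,p}_0$ bound on $\bu_n$ for every finite $p$; meanwhile, the refined monotonicity \eqref{e:mon_mu2} applied with $\bR_2=\mathbf{0}$ yields a uniform bound on $(\kappa+|\bT_n^{\bd}|)^{(1-\alpha)/2}$ in $L_2$, and hence via \eqref{e:normT1} a uniform $L_1$ bound on $\bT_n$.

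The hard step is the passage to the limit $n\to\infty$: weak compactness in $L_1$ alone is insufficient to identify the limit of the nonlinear stress term, since the constitutive nonlinearity is not continuous under weak $L_1$ convergence. I would invoke the Lipschitz truncation technique: the uniform $W^{1,p}$ bound on $\bu_n$ permits the construction of a Lipschitz approximation of $\bu_n-\bu$ that is an admissible test function even against a merely $L_1$ stress, and substituting it into the weak formulation yields an approximate Minty-type inequality. Passing to the limit, undoing the truncation, and applying the classical Minty trick together with \eqref{e:mon_mu1} and \eqref{e:mon_lambda1} identifies the constitutive law \eqref{e:strain_stress} pointwise almost everywhere; the momentum equation \eqref{e:div_weak} follows from weak $L_1$ convergence of $\bT_n$ against smooth test functions.

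Uniqueness of $\bu$ follows by subtracting two solutions and using \eqref{e:mon_mu1}, \eqref{e:mon_lambda1}, and Korn; uniqueness of $\bT$ additionally requires the strict inequality \eqref{e:mon_lambda2} available under \eqref{e:cond_lambda2bis}. The interior regularity under $\bF\in W^{2,2}$ I would obtain by a Nirenberg difference-quotient argument: the $\alpha$-degenerate monotonicity \eqref{e:mon_mu2} controls translations of $(\kappa+|\bT_n|)^{(1-\alpha)/2}$ locally in $L_2$, and Sobolev embedding converts this into a local $L_q$ bound on $\nabla\bT_n$ with the exponent $q$ dictated by the interplay between the degeneracy $\alpha$ and the dimension $d$. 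The principal obstacle throughout is this nonlinear weak-limit identification, which is why the Lipschitz truncation apparatus cannot be avoided.
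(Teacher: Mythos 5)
The theorem you are asked to prove is Theorem~4.3 of \cite{BMRS14}; the present paper imports it without proof, so the only basis for comparison is the paper's description of that proof's architecture. Your overall plan --- power-type regularization, Browder--Minty for each regularized problem, $n$-uniform bounds, and a limit passage --- matches that architecture.

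There is, however, a genuine gap in your passage to the limit. Your uniform estimates give an $L_1$ bound on $\bT_n$ together with an $L_\infty$ bound on $\strain{\bu_n}$. You then assert that ``weak compactness in $L_1$ alone is insufficient to identify the limit of the nonlinear stress term'' and propose Lipschitz truncation to cure this. But this implicitly assumes that a weak $L_1$ limit of $\bT_n$ exists, and that is precisely what fails: as the paper remarks explicitly at the start of Section~\ref{subsec:convg}, a uniform $L_1$ bound guarantees only biting weak convergence (via Chacon's biting lemma), and that is not enough to extract a subsequence converging even weakly in $L_1$. The obstruction is equi-integrability of $(\bT_n)$, and Lipschitz truncation of the displacement test function does not supply it. The proof in \cite{BMRS14} instead derives \emph{fractional derivative estimates} on $\bT_n$ --- this is where the hypothesis $\bF \in W^{\beta,1}(\Omega)^{d\times d}_{\rm sym}$ with $\beta \in (\alpha d, 1)$ is consumed --- and these upgrade the $L_1$ bound to strong local $L_q$ compactness for some $q>1$ on compacta $\Omega_0 \subset\subset \Omega$ (see the convergence statement \eqref{e:strongcon1}). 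Under strong local convergence the constitutive nonlinearity passes to the limit directly, without Minty or truncation. Your sketch never uses the constraint $\beta > \alpha d$, which is a telltale sign that the compactness mechanism you have in mind is not the one that closes the argument; the sentence ``the Lipschitz truncation apparatus cannot be avoided'' is in fact the opposite of what happens in \cite{BMRS14}.

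A smaller point: obtaining the $L_1$ bound on $\bT_n$ by applying \eqref{e:mon_mu2} with $\bR_2=\mathbf 0$ controls $(\kappa+|\bT_n^{\bd}|)^{1-\alpha}$ in $L_1$, which for $\alpha>0$ is strictly weaker than an $L_1$ bound on $\bT_n$ itself. The $L_1$ bound follows directly from the left inequality in \eqref{e:cond_mu1} (and likewise \eqref{e:cond_lambda1}) via the elementary identity $s^2/(\kappa+s)=s-\kappa/(1+\kappa/s)$, exactly as in Lemma~\ref{l:apriori}; the inequality \eqref{e:mon_mu2} is reserved for the difference-quotient argument in part~(d).
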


\begin{rem}
\label{rem:rangeq}
{\rm We note in connection with part (d) of the above theorem that when $d=3$, then
$$q = 2- \frac{1+\alpha}{2-\alpha}
$$
is a monotonic decreasing function of $\alpha$. Thus, as $0 \le \alpha < \frac{1}{3}$, we have $\frac{6}{5} < q \le \frac{3}{2}$.}
\end{rem}

%=========================

\section{Analysis of a Regularized Problem}

\label{sec:weakform}

The proof of existence of weak solutions to the problem is based on constructing a sequence of solutions to a
regularized problem, where the original stress-strain relationship \eqref{e:strain_stress} is modified to become
\begin{equation}\label{e:reg_T}
\strain{\bu} = \lambda( \tr{\bT} ) \tr{\bT} \bI + \mu(|\bT^{\bd}|)\bT^{\bd} +
\frac{\tr{\bT} \bI}{n|\tr{\bT}|^{1-\frac{1}{n}}} + \frac{\bT^{\mathbf d}}{n|\bT^{\mathbf d}|^{1-\frac{1}{n}}};
\end{equation}
here $n \in \mathbb{N}$ (where $\mathbb{N}$ denotes the set of all positive integers) is a regularization parameter, which we shall ultimately send to the limit $n \rightarrow \infty$.

Following this idea, we study in this work the finite element approximation of this
regularized problem, stated in the following variational form:
find $(\bT_n,\bu_n) \in \mathbb M_n \times \mathbb X_n$ satisfying
\begin{alignat}{2}\label{e:weak_reg}
\begin{aligned}
\quad a_n(\bT_n,\bS)+c(\bT_n;\bT_n,\bS)-b(\bS,\bu_n) &=0 \qquad &&\forall\, \bS \in\mathbb M_n,\\
\quad b(\bT_n,\bv) &= \int_\Omega \bF : \strain{\bv} \dd\bx \qquad &&\forall\, \bv \in \mathbb X_n,
\end{aligned}
\end{alignat}
where
\begin{align*}
a_n(\bT,\bS)&:= \frac{1}{n}\int_\Omega \left(\frac{\tr{\bT} \bI}{|\tr{\bT}|^{1-\frac{1}{n}}} + \frac{\bT^{\mathbf d}}{|\bT^{\mathbf d}|^{1-\frac{1}{n}}}\right): \bS \dd \bx, \\
c(\bT;\bR,\bS)&:= \int_\Omega  \left( \lambda(\tr{\bT})\tr{\bR}\bI + \mu(|\bT^{\bd}|)\bR^{\bd}\right) : \bS \dd\bx,
\\
b(\bS,\bv)&:= \int_\Omega \bS : \strain{\bv} \dd\bx,
\end{align*}
and
$$
\mathbb M_n:= L_{1+\frac{1}{n}}(\Omega)^{d\times d}_\textrm{sym}, \qquad \mathbb X_n:= W^{1,n+1}_0(\Omega)^d,\qquad n \in \mathbb{N}.
$$

Motivated by the form of the expression appearing on the right-hand side of the relationship
\eqref{e:reg_T}, we define the mapping
$\mathcal A_{n}: L_{1+\frac{1}{n}}(\Omega)^{d\times d}_\textrm{sym} \rightarrow L_{n+1}(\Omega)^{d\times d}_\textrm{sym}$ by
\begin{equation}\label{e:def_A}
\mathcal A_{n}(\bS) := \lambda(\tr{\bS}) \tr{\bS} \bI + \mu( | \bS^{\bd}|) \bS^{\bd}
+ \frac{\tr{\bS} \bI}{n|\tr{\bS}|^{1-\frac{1}{n}}} + \frac{\bS^{\bd}}{n|\bS^\bd|^{1-\frac{1}{n}}}.
\end{equation}
It follows from the inequalities \eqref{e:lambdasbdd} and \eqref{e:musbdd} that $\mathcal A_{n}$ does indeed take its values in $L_{n+1}(\Omega)^{d\times d}_\textrm{sym}$, since the first two terms belong to $L_{\infty}(\Omega)^{d\times d}_\textrm{sym}$ for all $\bS \in  L_{1+\frac{1}{n}}(\Omega)^{d\times d}_\textrm{sym}$, while the third and fourth term
belong to $L_{n+1}(\Omega)^{d \times d}_\textrm{sym}$ for all $\bS \in  L_{1+\frac{1}{n}}(\Omega)^{d\times d}_\textrm{sym}$, $n \in \mathbb{N}$.
Moreover, the mapping $\mathcal{A}_{n} :   L_{1+\frac{1}{n}}(\Omega)^{d\times d}_\textrm{sym} \rightarrow  L_{n+1}(\Omega)^{d\times d}_\textrm{sym}$ is bounded, continuous and coercive for all $n \in \mathbb{N}$,
as is asserted in the following lemma.

\begin{lem}[Boundedness, continuity and coercivity of $\mathcal A_{n}$]\label{l:A_cont}
Let $\lambda \in \mathcal \mathcal{C}^0(\mathbb R)$ and $\mu \in \mathcal \mathcal{C}^0([0,+\infty))$, and suppose that hypotheses (A1) and (A2) are valid.
Then, the following assertions hold:
\begin{itemize}
\item[(i)]
For any $n \in \mathbb{N}$, the mapping $\mathcal{A}_n:L_{1+ \frac{1}{n}}(\Omega)^{d\times d}_{\rm sym} \rightarrow L_{n+1}(\Omega)^{d\times d}_{\rm sym}$ is bounded; i.e.,  every
bounded set in $L_{1+ \frac{1}{n}}(\Omega)^{d\times d}_{\rm sym}$ is mapped by $\mathcal{A}_n$ into a bounded set in
$L_{n+1}(\Omega)^{d\times d}_{\rm sym}$;
\item[(ii)]
For any $n \in \mathbb{N}$, the mapping $\mathcal A_{n}: L_{1+ \frac{1}{n}}(\Omega)^{d\times d}_{\rm sym} \rightarrow L_{n+1}(\Omega)^{d\times d}_{\rm sym}$ is continuous, i.e.,
for any sequence $(\bS_k)_{k>0}\subset L_{1+ \frac{1}{n}}(\Omega)^{d\times d}_{\rm sym}$, which strongly converges in the norm of $L_{1+ \frac{1}{n}}(\Omega)^{d\times d}$ to some $\bS \in L_{1+ \frac{1}{n}}(\Omega)^{d\times d}_{\rm sym}$, we have that
$$
\mathcal A_{n}(\bS_k ) \rightarrow \mathcal A_{n} (\bS) \qquad \textrm{strongly in } L_{n+1}(\Omega)^{d\times d}_{\rm sym};
$$
\item[(iii)] For any $n \in \mathbb{N}$, the mapping $\mathcal{A}_n$ is coercive, i.e.,
\[ \frac{\int_\Omega \mathcal{A}_n(\bS):\bS \dd \bx}{\|\bS\|_{L_{1+\frac{1}{n}}(\Omega)}} \rightarrow \infty \qquad \mbox{as
$\|\bS\|_{L_{1+\frac{1}{n}}(\Omega)} \rightarrow \infty$.}\]
\end{itemize}
\end{lem}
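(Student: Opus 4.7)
The plan is to treat the four summands in the definition \eqref{e:def_A} separately, splitting them into the two ``elastic'' pieces involving $\lambda$ and $\mu$---which are uniformly bounded in $L_\infty$ thanks to \eqref{e:lambdasbdd} and \eqref{e:musbdd}---and the two ``regularization'' terms, which grow only like $|\cdot|^{1/n}$ and therefore carry the $L_{n+1}$ size and the coercivity.

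For (i), the $L_\infty$-boundedness of the first two summands, combined with $|\Omega|<\infty$, yields their boundedness in $L_{n+1}(\Omega)^{d\times d}_{\rm sym}$ independently of $\bS$. For the third summand, denoted $\bR_3(\bS)$, I would compute $|\bR_3(\bS)| = \sqrt{d}\,|\tr{\bS}|^{1/n}/n$, so that
\[
\|\bR_3(\bS)\|_{L_{n+1}(\Omega)}^{n+1} = \frac{d^{(n+1)/2}}{n^{n+1}}\int_\Omega |\tr{\bS}|^{1+\frac{1}{n}}\dd\bx \leq \frac{C(d,n)}{n^{n+1}}\|\bS\|_{L_{1+1/n}(\Omega)}^{1+\frac{1}{n}},
\]
using $|\tr{\bS}|\leq\sqrt{d}\,|\bS|$. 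The fourth summand is handled identically, replacing $\tr{\bS}$ by $\bS^{\bd}$ and using \eqref{e:normT2}.

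For (ii), each of the four summands is a Nemytskii operator associated with a continuous map on $\mathbb R^{d\times d}_{\rm sym}$---namely $\bR\mapsto\lambda(\tr{\bR})\tr{\bR}\,\bI$, $\bR\mapsto\mu(|\bR^{\bd}|)\bR^{\bd}$, and the two regularization maps $\bR\mapsto\bR|\bR|^{1/n-1}/n$, extended by zero at the origin (continuity at $\mathbf 0$ follows since $|\bR|^{1/n}\to 0$). Given $\bS_k\to\bS$ in $L_{1+1/n}(\Omega)^{d\times d}_{\rm sym}$, I would extract a subsequence converging a.e.\ with an $L_{1+1/n}$-dominant (the standard ``Brezis trick''); the pointwise bound $|\mathcal{A}_n(\bS_k)|^{n+1}\leq C(d,n)(1+|\bS_k|^{1+1/n})$, deduced exactly as in (i), then provides a fixed $L^1$-dominant, so that dominated convergence delivers $\mathcal{A}_n(\bS_k)\to\mathcal{A}_n(\bS)$ in $L_{n+1}$ along the subsequence. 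A subsequence-of-subsequence argument yields the claim for the full sequence.

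For (iii), I would test $\mathcal{A}_n(\bS)$ against $\bS$ using $\bI:\bS=\tr{\bS}$ and the orthogonality $\bS^{\bd}:\bI=0$ (so that $\bS^{\bd}:\bS=|\bS^{\bd}|^2$), which yields
\[
\int_\Omega \mathcal{A}_n(\bS):\bS\,\dd\bx = \int_\Omega\!\bigl(\lambda(\tr{\bS})(\tr{\bS})^2 + \mu(|\bS^{\bd}|)|\bS^{\bd}|^2\bigr)\dd\bx + \frac{1}{n}\!\int_\Omega\!\bigl(|\tr{\bS}|^{1+\frac{1}{n}} + |\bS^{\bd}|^{1+\frac{1}{n}}\bigr)\dd\bx.
\]
The first integral is nonnegative by \eqref{e:cond_lambda1}--\eqref{e:cond_mu1}, while \eqref{e:pbound} with $p=1+1/n$ applied to the second gives
\[
\int_\Omega \mathcal{A}_n(\bS):\bS\,\dd\bx \;\geq\; \frac{2^{-1/n}}{n}\,\|\bS\|_{L_{1+1/n}(\Omega)}^{1+\frac{1}{n}},
\]
so that the quotient in (iii) grows like $\frac{2^{-1/n}}{n}\|\bS\|_{L_{1+1/n}}^{1/n}\to\infty$. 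The only delicate point will be the dominated convergence step in (ii), because the map $\bR\mapsto\bR|\bR|^{1/n-1}$ fails to be Lipschitz near the origin; but its continuity together with the polynomial growth bound from (i) is sufficient to keep that argument routine.
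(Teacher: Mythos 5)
Your argument is correct, and parts (i) and (iii) coincide in all essentials with the paper's proof---the $L_\infty$ bounds \eqref{e:lambdasbdd} and \eqref{e:musbdd} control the elastic terms, the $|\cdot|^{1/n}$ growth controls the regularization terms, and for coercivity the elastic contribution is dropped (being nonnegative by \eqref{e:cond_lambda1}--\eqref{e:cond_mu1}) before invoking \eqref{e:pbound}. For part (ii), however, your route is genuinely different. The paper treats the four summands inhomogeneously: for the $\lambda$ and $\mu$ terms it extracts an a.e.\ convergent subsequence, applies dominated convergence with the constant dominant $C_2$ to obtain $L_1$ convergence, and then upgrades this to $L_p$ for every finite $p$ using the same uniform bound; for the two regularization terms it proves and applies the pointwise H\"older-type inequality $\bigl|\,x|x|^{a-1}-y|y|^{a-1}\,\bigr|\leq 2^{1-a}|x-y|^a$ (see \eqref{e:holder}), which converts strong $L_{1+1/n}$ convergence of $\bS_k$ directly into strong $L_{n+1}$ convergence of those two maps, with no subsequence extraction at all. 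You instead treat all four summands uniformly as Nemytskii operators and run the standard continuity argument (a.e.\ convergent subsequence with an $L_{1+1/n}$ dominant, the growth bound from (i) as a fixed $L^1$ dominant, dominated convergence, then subsequence-of-subsequences). Both are sound. Your version is the more economical for the lemma at hand, and it clarifies that only continuity of the integrand is needed. What the paper's choice buys is twofold: the estimate \eqref{e:holder} is reused quantitatively later (in the proofs of Lemma \ref{l:strong} and Theorem \ref{thm:err.inequ}), so it has to be established regardless; and the separate treatment of the $\lambda$ and $\mu$ terms makes explicit that their dominant is the constant $C_2$ rather than anything depending on the sequence, which is why, as Remark \ref{rem:Lipsch} points out, no H\"older continuity of $s\mapsto\lambda(s)s$ or $s\mapsto\mu(s)s$ is needed at this stage.
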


%\comment{VG: I deleted the index sym on the norm; is it OK?}

\begin{proof}
(i) It suffices to prove that any bounded ball in $L_{1+ \frac{1}{n}}(\Omega)^{d\times d}_\textrm{sym}$, centred at the origin, is mapped by $\mathcal{A}_n$ into a bounded set in  $L_{n+1}(\Omega)^{d\times d}_\textrm{sym}$. Consider, to this end, the bounded ball
$$B_R := \{\bS \in L_{1+ \frac{1}{n}}(\Omega)^{d\times d}_\textrm{sym}: \|\bS\|_{L_{1+ \frac{1}{n}}(\Omega)} \leq R\} \quad \mbox{with $R>0$}.$$
For every $\bS \in B_R$, we have that
\begin{align*} \|\mathcal{A}_n(\bS)\|_{L_{n+1}(\Omega)} &\leq
C_2 d^{\frac{1}{2}}|\Omega|^{\frac{1}{n+1}} + C_2 |\Omega|^{\frac{1}{n+1}} + \frac{1}{n}d^{\frac{1}{2}}  \|\tr{\bS}\|_{L_{1+\frac{1}{n}}(\Omega)}^{\frac{1}{n}}
+ \frac{1}{n} \|\bS^{\bd}\|_{L_{1+\frac{1}{n}}(\Omega)}^{\frac{1}{n}}\\
& \leq C_2 d^{\frac{1}{2}}|\Omega|^{\frac{1}{n+1}} + C_2 |\Omega|^{\frac{1}{n+1}} + \frac{1}{n}d^{\frac{1}{2}(1+\frac{1}{n})}  \|\bS\|_{L_{1+\frac{1}{n}}(\Omega)}^{\frac{1}{n}}
+ \frac{1}{n} \|\bS\|_{L_{1+\frac{1}{n}}(\Omega)}^{\frac{1}{n}},
\end{align*}
%\comment{VG:power $1/ n+1$ instead of $1/2$? Please check.}
where in the transition to the second inequality we have made use of the facts that, by the identity \eqref{e:normT0} we have  $|\tr{\bS}| \leq d^{\frac{1}{2}}|\bS|$ and
$|\bS^{\bd}|^2 = |\bS|^2 - \frac{1}{d}(\tr{\bS})^2$, whereby $|\bS^{\bd}|\leq |\bS|$. Hence,
\[ \|\mathcal{A}_n(\bS)\|_{L_{n+1}(\Omega)} \leq
C_2 d^{\frac{1}{2}}|\Omega|^{\frac{1}{n+1}} + C_2 |\Omega|^{\frac{1}{n+1}} + \frac{1}{n}d^{\frac{1}{2}(1+\frac{1}{n})}  R^{\frac{1}{n}}
+ \frac{1}{n} R^{\frac{1}{n}}=:R_*, \]
which implies that $\mathcal{A}_n(B_R)$ is contained in a bounded ball in $L_{n+1}(\Omega)^{d\times d}_\textrm{sym}$, centred at the origin, of radius $R_*$. Thus, $\mathcal{A}_n\,:\, L_{\Rd 1 \Bk+\frac{1}{n}}(\Omega)^{d\times d}_\textrm{sym} \rightarrow L_{n+1}(\Omega)^{d\times d}_\textrm{sym}$ is a bounded mapping.

(ii) Suppose that $\bS_k \rightarrow \bS$ strongly in $L_{1+ \frac{1}{n}}(\Omega)^{d\times d}_\textrm{sym}$.
We begin by showing that
$$
\lambda(\tr{\bS_k }) \tr{\bS_k} \bI \rightarrow \lambda(\tr{\bS}) \tr{\bS} \bI \quad \textrm{strongly in }
L_{n+1}(\Omega)^{d \times d}_\textrm{sym}.$$
By defining
 $\varphi_k := \lambda(\tr{\bS_k})\tr{\bS_k}$ and using \eqref{e:lambdasbdd},  we get
$$
| \varphi_k | \leq C_2 \qquad a.e. ~ \textrm{in }\Omega.
$$
Now, the strong convergence of $\{\bS_k\}_{k>0}$ in $L_{1+ \frac{1}{n}}(\Omega)^{d\times d}_\textrm{sym}$ implies that there exists a subsequence (not indicated) such that
$\bS_k \rightarrow \bS$ a.e. on $\Omega$. Thanks to the assumed continuity of $\lambda$, it then follows that
$$\varphi_k \rightarrow \varphi:= \lambda(\tr{\bS})\tr{\bS}$$ a.e. in $\Omega$ and $|\varphi| \leq C_2$ a.e. in $\Omega$.
By Lebesgue's dominated convergence theorem we therefore have that $\varphi_k \rightarrow \varphi$ strongly in $L_1(\Omega)$. When combined with the boundedness of $\varphi_k$, the strong convergence  $\varphi_k \rightarrow \varphi$ in $L_1(\Omega)$,
%In addition, for any function $\chi \in L_1(\Omega)$, $\varphi_k \chi \rightarrow \varphi  \chi$ a.e. in $\Omega$, and since $|\varphi_k \chi| \leq C_2 |\chi|$ and $\chi \in L_1(\Omega)$, Lebesgue's dominated convergence theorem implies that
%$\varphi_k \chi \rightarrow \varphi \chi$ in $L_1(\Omega)$. Hence, also,
%\[\left|\int_{\Omega} (\varphi_k - \varphi)(x)\, \chi(x) \dd \bx\right| \leq \|\varphi_k \chi - \varphi\chi \|_{L_1(\Omega)}
% \rightarrow 0 \qquad \mbox{as $k \rightarrow \infty$ for all $\chi \in L_1(\Omega)$}.
%\]
%Consequently, $\varphi_k \rightarrow \varphi$ weakly* in $L_\infty(\Omega)$, which, when combined with the strong convergence of
%$\varphi_k \rightarrow \varphi$ in $L_1(\Omega)$,
implies that $\varphi_k \rightarrow \varphi$ strongly in $L_p(\Omega)$ for all $p \in [1,\infty)$.
Therefore, taking $p=n+1$, the first term of $\mathcal{A}_{n}(\bS_k)$ strongly converges in $L_{n+1}(\Omega)^{d\times d}_\textrm{sym}$ to the first term in $\mathcal{A}_{n}(\bS)$. The same is true of the second term.
%\comment{VG:Is it necessary to use the function $\chi$? If the sequence is bounded and converges in $L_1$, doesn't it converge in $L_p$ for any $p>1$?}

To handle the third term, we note that since, for any $a \in (0,1]$,
\[ \bigg|\frac{x}{|x|} |x|^{a} - \frac{y}{|y|} |y|^{a}\bigg| \leq 2^{1-a}|x - y|^{a}\qquad \forall\,x,y \in \mathbb{R}\setminus\{0\},\]
it follows with $a  = \frac{1}{n}$, $n \in \mathbb{N}$, that
\begin{align}\label{e:holder}
 \bigg|\frac{\tr{\bS_k}}{|\tr{\bS_k}|^{1-\frac{1}{n}}}  - \frac{\tr{\bS}}{|\tr{\bS}|^{1-\frac{1}{n}}} \bigg|
\leq 2^{1-\frac{1}{n}}|\tr{\bS_k} - \tr{\bS}|^{\frac{1}{n}} \leq 2^{1-\frac{1}{n}}d^{\frac{1}{2n}}\,|\bS_k - \bS|^{\frac{1}{n}},
\end{align}
whereby the assumed strong convergence $\bS_k \rightarrow \bS$ in $L_{1+\frac{1}{n}}(\Omega)^{d \times d}_{\textrm{sym}}$ implies that
\[ \frac{\tr{\bS_k}}{|\tr{\bS_k}|^{1-\frac{1}{n}}} \bI \rightarrow \frac{\tr{\bS}}{|\tr{\bS}|^{1-\frac{1}{n}}} \bI\]
in $L_{n+1}(\Omega)^{d\times d}_\textrm{sym}$, $n \in \mathbb{N}$.
By an identical argument the fourth term strongly converges in $L_{n+1}(\Omega)^{d\times d}_\textrm{sym}$, $n \in \mathbb{N}$.

(iii) Note that, by assumptions \eqref{e:cond_lambda1} and \eqref{e:cond_mu1},
\begin{align*}
\int_\Omega \mathcal{A}_n(\bS) : \bS \dd \bx &\geq \frac{1}{n}
\int_\Omega \Big( |\tr{\bS}|^{1+\frac{1}{n}} + |\bS^{\bd}|^{1+\frac{1}{n}} \Big) \dd \bx.
\end{align*}
By taking $p=1+\frac{1}{n}$ with $n \in \mathbb{N}$ in \eqref{e:pbound} and using \eqref{e:pbound}, we then have that
$$
\int_\Omega \mathcal{A}_n(\bS) : \bS \dd \bx
%\geq \frac{2^{-\frac{1}{n}}}{n}
%\int_\Omega \bigg(\frac{1}{d}|\tr{\bS}|^2 + |\bS^{\bd}|^2\bigg)^{\frac{n+1}{2n}} \dd \bx\\
\ge \frac{2^{-\frac{1}{n}}}{n}
\int_\Omega |\bS|^{\frac{n+1}{n}} \dd \bx = \frac{2^{-\frac{1}{n}}}{n}\|\bS\|^{1+\frac{1}{n}}_{L_{1+\frac{1}{n}}(\Omega)}.
$$
As the exponent $1+\frac{1}{n}$ appearing on the right-hand side of the last equality is strictly greater than $1$ for all
$n \in \mathbb{N}$, the
coercivity of the mapping $\mathcal{A}_n: L_{1+ \frac{1}{n}}(\Omega)^{d\times d}_\textrm{sym} \rightarrow L_{n+1}(\Omega)^{d\times d}_\textrm{sym}$ directly follows.
\end{proof}

\begin{rem}
\label{rem:Lipsch}
{\rm One can simplify the proof of the continuity of $\mathcal{A}_n$ asserted in Lemma \ref{l:A_cont} (ii) by assuming that $s\mapsto \lambda(s)s$ and $s\mapsto \mu(s)s$ are globally H\"older-continuous functions over their respective domains of definition. The latter assumption will be required in Theorem \ref{thm:err.inequ} to deduce rates of convergence for the finite element approximation of the regularized problem; prior to that, we do not assume the global H\"older-continuity of $s\mapsto \lambda(s)s$ and $s\mapsto \mu(s)s$.
}
\end{rem}

\begin{lem}[Monotonicity of $\mathcal{A}_n$]
\label{l:A_mono}
Assume that $\lambda \in \mathcal \mathcal{C}^0(\mathbb R)$ and $\mu \in \mathcal \mathcal{C}^0([0,+\infty))$, and that
hypotheses \eqref{e:cond_lambda1}--\eqref{e:cond_mu2} are satisfied. Then, for any $n \in \mathbb{N}$, the mapping $\mathcal{A}_n:L_{1+ \frac{1}{n}}(\Omega)^{d\times d}_{\rm sym} \rightarrow L_{n+1}(\Omega)^{d\times d}_{\rm sym}$ is
monotone, i.e.,
\begin{equation}
\int_\Omega (\mathcal{A}_n(\bS_1) - \mathcal{A}_n(\bS_2)):(\bS_1 - \bS_2) \dd \bx \geq 0
\label{e:A_nmonotone}
\end{equation}
for any pair of functions $\bS_1, \bS_2 \in L_{1+ \frac{1}{n}}(\Omega)^{d\times d}_{\rm sym}$. Furthermore, monotonicity is strict, in the sense that equality holds if, and only if, $\bS_1 = \bS_2$ a.e. on $\Omega$.
\end{lem}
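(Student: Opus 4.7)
The plan is to split $\mathcal{A}_n(\bS_1)-\mathcal{A}_n(\bS_2)$ into the four pieces corresponding to the four summands of \eqref{e:def_A} and show that each of them pairs non-negatively with $\bS_1-\bS_2$ under the integral. The orthogonality identity $\bR^{\bd}:\bI=0$ together with $\tr{(\bS_1-\bS_2)}=\tr{\bS_1}-\tr{\bS_2}$ decouples the ``trace'' contributions (first and third summands) from the ``deviatoric'' contributions (second and fourth summands), so that I never have a mixed term of the form ``trace times deviator''. Once this decoupling is performed, the first two contributions are dealt with directly by the monotonicity inequalities already stated in the paper.

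More precisely, after the decoupling, the first summand's pairing reduces to
\[
\int_\Omega \bigl(\lambda(\tr{\bS_1})\tr{\bS_1}-\lambda(\tr{\bS_2})\tr{\bS_2}\bigr)\bigl(\tr{\bS_1}-\tr{\bS_2}\bigr)\dd\bx,
\]
which is non-negative by \eqref{e:mon_lambda1}; similarly the second summand's pairing reduces to
\[
\int_\Omega \bigl(\mu(|\bS_1^{\bd}|)\bS_1^{\bd}-\mu(|\bS_2^{\bd}|)\bS_2^{\bd}\bigr):\bigl(\bS_1^{\bd}-\bS_2^{\bd}\bigr)\dd\bx \ge 0,
\]
by \eqref{e:mon_mu1}.

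For the two regularization contributions I recognize the classical $p$-Laplacian-type maps $t\mapsto t|t|^{1/n-1}$ on $\mathbb R$ (applied to $\tr{\bS}$) and $\bR\mapsto \bR|\bR|^{1/n-1}$ on $\mathbb R^{d\times d}_{\rm sym}$ (applied to $\bS^{\bd}$). Both are gradients of the strictly convex potentials $t\mapsto \frac{n}{n+1}|t|^{1+1/n}$ and $\bR\mapsto \frac{n}{n+1}|\bR|^{1+1/n}$, and consequently satisfy the pointwise strict monotonicity inequality
\[
\bigl(x|x|^{1/n-1}-y|y|^{1/n-1}\bigr)\cdot (x-y)\ge 0,
\]
with equality if and only if $x=y$. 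Dividing by $n$ and integrating, the third and fourth summands contribute non-negative terms to the total, completing the proof of \eqref{e:A_nmonotone}.

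For the strict statement, if the total integral vanishes then each of the four non-negative summands must vanish. The strict monotonicity of the two $p$-Laplacian-type pairings forces $\tr{\bS_1}=\tr{\bS_2}$ and $\bS_1^{\bd}=\bS_2^{\bd}$ a.e.\ on $\Omega$, which combine into $\bS_1=\bS_2$ a.e. I do not anticipate a serious obstacle; the only point requiring mild care is the pointwise strict monotonicity of the tensorial $p$-Laplacian map, which follows from the strict convexity of $\bR\mapsto |\bR|^{1+1/n}$ on $\mathbb R^{d\times d}_{\rm sym}$. Note that the first two summands provide no strict information (since \eqref{e:mon_lambda1} and \eqref{e:mon_mu1} are not assumed strict without \eqref{e:cond_lambda2bis}), which is why the argument must be driven by the regularization terms.
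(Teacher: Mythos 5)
Your proposal is correct and follows the same route as the paper: decouple via the trace--deviator orthogonality, invoke \eqref{e:mon_lambda1} and \eqref{e:mon_mu1} for the $\lambda$ and $\mu$ terms, and drive the strictness from the $p$-Laplacian-type regularization terms. The only cosmetic difference is that the paper justifies strict pointwise monotonicity of the regularization maps via the explicit integral representation \eqref{e:mono1}--\eqref{e:mono2} (giving the quantitative bound \eqref{e:A-monotone}), whereas you appeal to strict convexity of the potential $|\cdot|^{1+1/n}$ --- both are valid and lead to the same conclusion.
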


\begin{proof}
To prove the monotonicity of $\mathcal{A}_n$, note first that for any pair of matrices $\bS, \bR \in \mathbb{R}^{d \times d}_{\textrm{sym}}$ one has
\begin{equation}\label{e:mono1}
%\begin{aligned}
(\bS|\bS|^{\frac{1}{n}-1} - \bR|\bR|^{\frac{1}{n}-1}):(\bS - \bR) \geq \frac{1}{n}|\bS - \bR|^2 \int_0^1 |\bR + \theta (\bS-\bR)|^{\frac{1}{n}-1}
\dd \theta \geq 0,
%& \geq  |\bS - \bR| \big((|\bR| + |\bS - \bR|)^{\frac{1}{n}}-|\bR|^{\frac{1}{n}}\big) \geq 0,
%\end{aligned}
\end{equation}
and since $n \ge 1$, the expression on the right-hand side
%of the second inequality
is equal to 0 if, and only if, $\bS = \bR$. Similarly, for any $s, r \in \mathbb R$,
\begin{equation}\label{e:mono2}
(s|s|^{\frac{1}{n}-1} - r|r|^{\frac{1}{n}-1})\,(s - r)  =  \frac{1}{n} |s - r|^2 \int_0^1 |r + \theta (s-r)|^{\frac{1}{n}-1}\dd \theta  \geq 0,
%|s - r| \big((|r| + |s - r|)^{\frac{1}{n}}-|r|^{\frac{1}{n}}\big) \geq 0,
\end{equation}
and the expression on the right-hand side
%of the first inequality
is equal to 0 if, and only if, $s = r$. Hence, and by noting
the inequalities \eqref{e:mon_mu1} and \eqref{e:mon_lambda1}, we have that
\begin{align}\label{e:A-monotone}
\begin{aligned}
&\int_\Omega (\mathcal{A}_n(\bS_1) - \mathcal{A}_n(\bS_2)):(\bS_1 - \bS_2) \dd \bx \\
&\qquad\geq
\frac{1}{n^2} |\tr{\bS_1-\bS_2}|^2 \int_0^1 \big|\tr{\bS_2 + \theta (\bS_1 - \bS_2)}\big|^{\frac{1}{n}-1}\dd \theta \\
& \qquad \quad +
\frac{1}{n^2} |\bS_1^\bd - \bS_2^\bd|^2\int_0^1 \big|\bS_2^\bd + \theta (\bS_1^\bd - \bS_2^\bd)|^{\frac{1}{n}-1}\dd \theta.
%\int_\Omega |\tr{\bS_1} - \tr{\bS_2}| \big((|\tr{\bS_2}| + |\tr{\bS_1} - \tr{\bS_2}|)^{\frac{1}{n}}-|\tr{\bS_2}|^{\frac{1}{n}}\big)\dd \bx\\
%&\qquad \quad +\int_\Omega |\bS_1^\bd - \bS_2^\bd| \big((|\bS_2^\bd| + |\bS_1^\bd - \bS_2^\bd|)^{\frac{1}{n}}-|\bS_2^\bd|^{\frac{1}{n}}\big)\dd \bx \geq 0,
\end{aligned}
\end{align}
The expression on the right-hand side
%of the first inequality
of this inequality is nonnegative and it is equal to 0 if, and only if, $\tr{\bS_1}=\tr{\bS_2}$
a.e. on $\Omega$ and $\bS_1^\bd = \bS_2^\bd$ a.e. on $\Omega$, that is, when $\bS_1 = \bS_2$ a.e. on $\Omega$.
\end{proof}
%\comment{VG:Since the exponent of the integrand belongs to $(-1,0]$, the integral is always strictly positive and never infinite. Is this right?
%If this is correct the second inequality does not seem to be necessary here. Unless I have missed it, it does not seem to be used further on.}

%=====================

\section{A-Priori Estimates for the Regularized Problem}

\label{sec:aprioriexct}

Our aim in this section is to derive a-priori estimates for the regularized problem \eqref{e:weak_reg}. Clearly, problem \eqref{e:weak_reg} can be interpreted as a constrained system with a (strictly) monotone nonlinearity. The constraint is the second equation in problem \eqref{e:weak_reg}; it is linear and nonhomogeneous, and can be, as is usual in mixed variational
problems, transformed into a homogenous constraint
via an inf-sup property, which we state in the next lemma.
\begin{lem}[Inf-sup property]\label{l:infsup}
The following inequality holds for all $n \in \mathbb{N}$:
\begin{equation}
\label{e:LBB}
\inf_{\bv \in \mathbb X_n} \,\sup_{\bS \in \mathbb M_n} \, \frac{b(\bS,\bv)}{\| \bS \|_{L_{1+\frac{1}{n}}(\Omega)} \| \strain{\bv} \|_{L_{n+1}(\Omega)}} \geq 1.
\end{equation}
\end{lem}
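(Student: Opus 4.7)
The proof is a direct duality argument exploiting the fact that the exponents $1+\frac{1}{n}$ and $n+1$ are H\"older conjugates, since
$$\frac{1}{1+\frac{1}{n}} + \frac{1}{n+1} = \frac{n}{n+1} + \frac{1}{n+1} = 1.$$
Consequently, H\"older's inequality immediately yields the upper bound $b(\bS,\bv) \le \|\bS\|_{L_{1+\frac{1}{n}}(\Omega)}\|\strain{\bv}\|_{L_{n+1}(\Omega)}$ for every $\bS \in \mathbb{M}_n$ and $\bv \in \mathbb{X}_n$, i.e., the ratio is always at most $1$. To establish the inf-sup bound, the plan is to show that for every $\bv \in \mathbb{X}_n$ the supremum is \emph{attained} and equals exactly $1$, via an explicit dual test function.

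Given $\bv \in \mathbb X_n$ with $\strain{\bv} \not\equiv 0$, I would define
$$\bS_\bv := |\strain{\bv}|^{n-1}\strain{\bv},$$
with the convention $\bS_\bv(\bx) = \mathbf{0}$ at points where $\strain{\bv}(\bx) = \mathbf{0}$. This is the canonical $L_{n+1}$--$L_{1+1/n}$ duality map applied to $\strain{\bv}$. One checks that $\bS_\bv$ is symmetric (it is a scalar multiple of the symmetric tensor $\strain{\bv}$ at each point), and that $|\bS_\bv|^{1+\frac{1}{n}} = |\strain{\bv}|^{n+1}$; hence $\bS_\bv \in L_{1+\frac{1}{n}}(\Omega)^{d\times d}_{\mathrm{sym}} = \mathbb{M}_n$, with
$$\|\bS_\bv\|_{L_{1+\frac{1}{n}}(\Omega)} = \left(\int_\Omega |\strain{\bv}|^{n+1}\dd\bx\right)^{\frac{n}{n+1}} = \|\strain{\bv}\|_{L_{n+1}(\Omega)}^n.$$

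Next I would compute
$$b(\bS_\bv,\bv) = \int_\Omega \bS_\bv : \strain{\bv}\dd\bx = \int_\Omega |\strain{\bv}|^{n+1}\dd\bx = \|\strain{\bv}\|_{L_{n+1}(\Omega)}^{n+1},$$
so that
$$\frac{b(\bS_\bv,\bv)}{\|\bS_\bv\|_{L_{1+\frac{1}{n}}(\Omega)}\|\strain{\bv}\|_{L_{n+1}(\Omega)}} = \frac{\|\strain{\bv}\|_{L_{n+1}(\Omega)}^{n+1}}{\|\strain{\bv}\|_{L_{n+1}(\Omega)}^n \cdot \|\strain{\bv}\|_{L_{n+1}(\Omega)}} = 1.$$
Taking the supremum over $\bS \in \mathbb M_n$ yields a value at least $1$ for this $\bv$. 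The infimum over $\bv$ is then also at least $1$, noting that the degenerate case $\strain{\bv} \equiv 0$ is excluded since Korn's inequality \eqref{e:Korn1} applied in $W^{1,n+1}_0(\Omega)^d$ forces $\bv = \mathbf{0}$, in which case the ratio is conventionally excluded from the infimum.

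There is no genuine obstacle here; the only minor care needed is in defining $\bS_\bv$ on the (possibly nontrivial) set where $\strain{\bv}$ vanishes so that measurability and the pointwise identity $|\bS_\bv|^{1+1/n} = |\strain{\bv}|^{n+1}$ hold globally. Since $n \ge 1$, the map $\bR \mapsto |\bR|^{n-1}\bR$ (extended by $\mathbf{0}$ at $\bR = \mathbf{0}$) is continuous on $\mathbb{R}^{d\times d}_{\mathrm{sym}}$, making $\bS_\bv$ measurable as a composition with the measurable tensor $\strain{\bv}$.
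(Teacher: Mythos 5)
Your argument is essentially identical to the paper's: both take the duality test function $\bS = |\strain{\bv}|^{n-1}\strain{\bv} \in \mathbb{M}_n$ and verify directly that $b(\bS,\bv) = \|\strain{\bv}\|_{L_{n+1}(\Omega)}\|\bS\|_{L_{1+\frac{1}{n}}(\Omega)}$, giving the bound $1$. Your extra remarks on H\"older's inequality (showing the constant is sharp) and on measurability at the zero set of $\strain{\bv}$ are correct but not needed for the lower bound.
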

\begin{proof}
Given $\bv \in \mathbb X_n = W^{1,n+1}_0(\Omega)^d$, it suffices to note that
$\bR = \strain{\bv} |\strain{\bv}|^{n-1} \in L_{1+\frac{1}{n}}(\Omega)^{d \times d}_{\rm sym}$ and that we have
$$
b(\bR,\bv) = \| \strain{\bv} \|_{L_{n+1}(\Omega)}^{n+1} = \| \strain{\bv} \|_{L_{n+1}(\Omega)} \| \strain{\bv} \|_{L_{n+1}(\Omega)}^n = \| \strain{\bv} \|_{L_{n+1}(\Omega)} \|\bR\|_{L_{1+\frac{1}{n}}(\Omega)}.
$$
Whence,
$$
\sup_{\bS \in \mathbb M_n} \frac{b(\bS,\bv)}{\| \bS \|_{L_{1+\frac{1}{n}}(\Omega)}} \geq  \| \strain{\bv} \|_{L_{n+1}(\Omega)},
$$
and the stated inf-sup property follows.
\end{proof}

We shall assume henceforth that, as in Theorem \ref{t:existence}, $\mathbf{f}=-\rm{div}(\bF)$, with $\bF \in W^{\beta,1}(\Omega)^{d \times d}_{\rm sym}$
and $\beta \in (\alpha d, 1)$ (recall that, by hypothesis, $0<\alpha<\frac{1}{d}$); hence, by Sobolev embedding $\bF \in L_{1+ \frac{\beta}{d-\beta}}(\Omega)^{d \times d}_{\rm sym}$
whereby also $\bF \in L_{1+\frac{1}{n}}(\Omega)^{d \times d}_{\rm sym} = \mathbb M_n$ for all $n \geq \frac{d}{\beta} -1$
(consequently, $\mathbf{f} \in  W^{-1,1+ \frac{1}{n}}(\Omega)^{d} = (W^{1,n+1}_0(\Omega)^{d})' = (\mathbb X_n)'$ for all $n \geq \frac{d}{\beta} -1$), and we define
$$ \bT^{\mathbf{f}}_n := \bF.$$
Clearly, the subscript $_n$ in the expression on the left-hand side of this equality is redundant, as $\bT^{\mathbf{f}}_n$ is equal to $\bF$
for all $n \geq \frac{d}{\beta} -1$. We shall however continue to carry this redundant subscript in order to emphasize
the fact that the problem, as a whole, is dependent on $n$. Should it be desired that $\bF \in L_{1+\frac{1}{n}}(\Omega)^{d \times d}_{\rm sym}$
for all $n \in \mathbb{N}$, one can, instead, adopt the slightly stronger assumption that
$\bF \in W^{\beta,1}(\Omega)^{d \times d}_{\rm sym}\cap L_2(\Omega)^{d \times d}_{\rm sym}$.

%\comment{AB: why using the subscript $n$?}
%
The use of the function $\bT^{\mathbf{f}}_n$ will allow us to lift the constraint imposed by the second equation in
problem \eqref{e:weak_reg} by converting it into a homogeneous equation;
we can then replace the first equation in \eqref{e:weak_reg} by
one that is considered on a linear subspace $\mathbb V_n$ of $\mathbb M_n$, defined below, which we choose to be
the kernel of the mapping $\mathrm{div} : \mathbb M_n \rightarrow (\mathbb X_n)'$.

Trivially,
\begin{equation}\label{e:constraint}
\int_\Omega \bT^{\mathbf{f}}_n : \strain{\bv} \dd \bx = \int_{\Omega} \bF : \strain{\bv} \dd \bx\qquad \forall\, \bv \in \mathbb X_n.
\end{equation}
We define
\begin{equation}
\label{eq:Vn}
\mathbb V_n := \left\lbrace \bS \in \mathbb M_n ~ : ~ \ b(\bS,\bv) = 0 \quad \forall\, \bv \in \mathbb X_n \right\rbrace
= \left\lbrace \bS \in \mathbb M_n ~:~ \div{\bS} = {\mathbf{0}}  \in  (\mathbb X_n)' \right \rbrace = \mathrm{Ker(div)}.
\end{equation}
As $\mathbb X_n$ is a reflexive Banach space, transposition yields that the
transpose $(-\mathrm{div})' : (\mathbb X_n)''=\mathbb X_n  \rightarrow  (\mathbb M_n)'$ of the linear operator $- \mathrm{div} : \mathbb M_n \rightarrow (\mathbb X_n)'$ is $(-\mathrm{div})' = \strain{\cdot}$.
The annihilator ${\mathbb V}_n^\perp$ of $\mathbb V_n$ is, by definition,
$${\mathbb V}_n^\perp :=\{\ell \in (\mathbb M_n)' : \ell(\bS) = 0 \quad \forall\, \bS \in \mathbb V_n\}.$$

By the Riesz representation theorem the dual space
$(\mathbb M_n)'$ of $L_{1+\frac{1}{n}}(\Omega)^{d \times d}_{\rm sym}$ is isometrically isomorphic to
$L_{n+1}(\Omega)^{d \times d}_{\rm sym}$.  Furthermore, since $-\mathrm{div} : \mathbb M_n \rightarrow
(\mathbb X_n)'$ is a bounded linear operator, it is also a closed linear operator. Hence, by Banach's closed range theorem,
\begin{align*}
{\mathbb V}_n^\perp &:= \left\lbrace \bR \in L_{n+1}(\Omega)^{d \times d}_{\rm sym} ~ : ~ \ \int_\Omega \bR : \bS \dd \bx = 0
\quad \forall\, \bS \in \mathbb V_n \right\rbrace \\
& ~= [\mathrm{Ker}(\mathrm{div})]^\perp = [\mathrm{Ker}(-\mathrm{div})]^\perp = \mathrm{Range}((-\mathrm{div})')
 = \mathrm{Range}(\strain{\cdot}).
\end{align*}
Furthermore, once again by the closed range theorem,
\[ \mathrm{Range}(\mathrm{div}) = \mathrm{Range}(\mathrm{-div}) =  [\mathrm{Ker}((-\mathrm{div})')]^\perp = [\mathrm{Ker}(\strain{\cdot})]^\perp = [\{0\}]^\perp = (\mathbb X_n)',\]
where the penultimate equality follows from the inequality \eqref{e:korn}.

Thanks to the definition of $\bT^{\mathbf{f}}_n$,
\begin{equation}\label{e:bound_Tf}
\| \bT^{\mathbf{f}}_n \|_{L_{1+\frac{1}{n}}(\Omega)} = \| \bF \|_{L_{1+\frac{1}{n}}(\Omega)}.
\end{equation}
Using $\bT^{\mathbf{f}}_n$, we can eliminate the constraint \eqref{e:weak_reg}$_2$ by setting $$\bT^0_n := \bT_n - \bT^{\mathbf{f}}_n  \in \mathbb V_n$$
 and consider the problem: find $\bT^0_n   \in \mathbb V_n$ such that
\begin{equation}\label{e:T0}
a_n(\bT^0_n + \bT^{\mathbf{f}}_n,\bS)+c_n(\bT^0_n+\bT^{\mathbf{f}}_n;\bT^0_n+\bT^{\mathbf{f}}_n,\bS) =0\qquad \forall\, \bS \in \mathbb V_n.
\end{equation}
From here, by using Lemma \ref{l:A_cont} and Lemma \ref{l:A_mono}, we easily deduce that the mapping
$$\bS \in \mathbb V_n \mapsto \mathcal A_{n}(\bS + \bT^{\mathbf{f}}_n) \in L_{n+1}(\Omega)^{d \times d}_{\rm sym} = (\mathbb M_n)' \subset (\mathbb{V}_n)'
$$
is bounded, continuous (and therefore hemi-continuous),  coercive and monotone; in addition, $\mathbb{V}_n$ is a separable reflexive Banach space, as it is a closed
linear subspace of the separable and reflexive Banach space $\mathbb M_n = L_{1+ \frac{1}{n}}(\Omega)^{d \times d}_{\rm sym}$. Therefore, by the Browder--Minty theorem (cf., for instance,  \cite{ref:Sch,ref:Lions}) problem \eqref{e:T0}, and hence also problem \eqref{e:weak_reg}, has a solution $\bT_n  = \bT^0_n + \bT^{\mathbf{f}}_n \in \mathbb M_n$, and since by Lemma \ref{l:A_mono} the operator $\mathcal{A}_n$ is strictly monotone, the solution is unique.

With $\bT_n \in \mathbb M_n$ thus uniquely fixed, we seek $\bu_n \in \mathbb{X}_n$ such that
\[ b(\bS,\bu_n) = a_n(\bT_n,\bS)+c(\bT_n;\bT_n,\bS) \qquad \forall\, \bS \in\mathbb M_n.\]
Consider the linear functional $\ell_n \in (\mathbb M_n)'$ defined by
\[ \ell_n(\bS):= a_n(\bT_n,\bS)+c(\bT_n;\bT_n,\bS),\qquad \bS \in \mathbb M_n.\]
Hence, thanks to equation \eqref{e:T0}, we have that $\ell_n(\bS) = 0$ for all $\bS \in \mathbb{V}_n$; consequently, $\ell_n \in {\mathbb V}_n^\perp$.
Thus, we are seeking $\bu_n \in \mathbb{X}_n$ such that
\begin{align} \label{e:b-ell-eq}
 b(\bS,\bu_n) = \ell_n(\bS)\qquad \forall\, \bS \in\mathbb M_n.
\end{align}
As $\ell_n \in {\mathbb V}_n^\perp=[\mathrm{Ker}(\mathrm{div})]^\perp 
%\msout{= [\mathrm{Ker}(-\mathrm{div})]^\perp = \mathrm{Range}((-\mathrm{div})')}
= \mathrm{Range}(\strain{\cdot})$, there exists a $\bu_n \in \mathbb{X}_n$ such that
 $\strain{\bu_n} = \ell_n$; that is $\bu_n \in \mathbb X_n$ solves problem \eqref{e:b-ell-eq}.
The inf-sup property \eqref{e:LBB}, together with the inequality \eqref{e:korn},
then implies that such a $\bu_n \in \mathbb{X}_n$ is unique.
Thus we have shown the existence of a unique solution pair $(\bT_n, \bu_n)
\in \mathbb{M}_n \times \mathbb{X}_n$ to the regularized problem \eqref{e:weak_reg}.

Next we shall prove the following a-priori bounds on $\strain{\bu_n}$ and $\bT_n$.

\begin{lem}[A-priori estimates]\label{l:apriori}
Suppose that $\bF \in L_{1+\frac{1}{n}}(\Omega)^{d \times d}_{\rm sym}$, and that $\lambda$ and $\mu$ satisfy the properties \eqref{e:cond_lambda1} and \eqref{e:cond_mu1}.
We then have that
\begin{align*}
\| \strain{\bu_n} \|_{L_{n+1}(\Omega)}
&\leq
\frac{1}{n}\,d\sqrt{2} \left[16d^2\|\bF\|_{L_{1+\frac{1}{n}}(\Omega)}^{1+\frac{1}{n}}
+ 2 (n+1) C_2 \sqrt{2d}\, | \Omega|^{\frac{1}{n+1}} \|\bF\|_{L_{1+\frac{1}{n}}(\Omega)} + 4 (n+1) C_1 \kappa |\Omega|      \right]^{\frac{1}{n+1}}\\
& \qquad + C_2 \sqrt{2d}\, | \Omega|^{\frac{1}{n+1}}.
\end{align*}
Moreover,
\begin{align*}
\frac{1}{n+1}\|\bT_n\|_{L_{1+\frac{1}{n}}(\Omega)}^{1+\frac{1}{n}} + C_1 \|\bT_n\|_{L_1(\Omega)}
\leq \frac{16d^2}{n+1}\|\bF\|^{1+\frac{1}{n}}_{L_{1+\frac{1}{n}}(\Omega)}
+ 2C_2 \sqrt{2d}\, | \Omega|^{\frac{1}{n+1}}\|\bF\|_{L_{1+\frac{1}{n}}(\Omega)} + 4C_1 \kappa |\Omega|.
\end{align*}
\end{lem}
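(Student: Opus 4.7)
The plan is to derive the bound on $\bT_n$ first, via an energy-type identity obtained by a judicious choice of test function, and then derive the bound on $\strain{\bu_n}$ from the inf-sup property already established in Lemma~\ref{l:infsup}.

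The key observation is that $\bT_n - \bT_n^{\bef}\in\mathbb{V}_n$, so testing the first equation of \eqref{e:weak_reg} with $\bS=\bT_n-\bT_n^{\bef}$ makes the $b(\cdot,\bu_n)$ contribution vanish and yields the identity
\[
a_n(\bT_n,\bT_n)+c(\bT_n;\bT_n,\bT_n)=a_n(\bT_n,\bT_n^{\bef})+c(\bT_n;\bT_n,\bT_n^{\bef}).
\]
I would bound the left-hand side from below by combining three ingredients: (a) the coercivity computation in the proof of Lemma~\ref{l:A_cont}(iii), which via \eqref{e:pbound} gives $a_n(\bT_n,\bT_n)\ge \tfrac{2^{-1/n}}{n}\|\bT_n\|_{L_{1+1/n}(\Omega)}^{1+1/n}$, and the elementary inequality $(1+1/n)^n\ge 2$ to conclude $\tfrac{2^{-1/n}}{n}\ge\tfrac{1}{n+1}$; (b) the first inequalities in \eqref{e:cond_lambda1} and \eqref{e:cond_mu1} together with the elementary bound $\tfrac{s^2}{\kappa+s}\ge s-\kappa$ for $s\ge 0$, which yields $c(\bT_n;\bT_n,\bT_n)\ge C_1(\|\tr{\bT_n}\|_{L_1(\Omega)}+\|\bT_n^{\bd}\|_{L_1(\Omega)})-2C_1\kappa|\Omega|$; and (c) the triangle-type inequality \eqref{e:normT1}, giving $c(\bT_n;\bT_n,\bT_n)\ge C_1\|\bT_n\|_{L_1(\Omega)}-2C_1\kappa|\Omega|$.

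For the right-hand side I would exploit the orthogonality of the trace and deviatoric parts. Writing $\mathcal B(\bT_n):=\tfrac{\tr{\bT_n}}{|\tr{\bT_n}|^{1-1/n}}\bI+\tfrac{\bT_n^{\bd}}{|\bT_n^{\bd}|^{1-1/n}}$, the orthogonality gives $|\mathcal B(\bT_n)|^2=d|\tr{\bT_n}|^{2/n}+|\bT_n^{\bd}|^{2/n}\le(d^{1+1/n}+1)|\bT_n|^{2/n}\le 2d^2|\bT_n|^{2/n}$ for $d\ge 1$, whence $\|\mathcal B(\bT_n)\|_{L_{n+1}(\Omega)}\le d\sqrt{2}\,\|\bT_n\|_{L_{1+1/n}(\Omega)}^{1/n}$ and, by H\"older,
\[
|a_n(\bT_n,\bT_n^{\bef})|\le \frac{d\sqrt{2}}{n}\|\bT_n\|_{L_{1+1/n}(\Omega)}^{1/n}\|\bF\|_{L_{1+1/n}(\Omega)}.
\]
An analogous orthogonality argument, together with \eqref{e:lambdasbdd}, \eqref{e:musbdd} and H\"older's inequality in $L_1\hookrightarrow L_{1+1/n}$ with exponent $|\Omega|^{1/(n+1)}$, gives $|c(\bT_n;\bT_n,\bT_n^{\bef})|\le C_2\sqrt{2d}\,|\Omega|^{1/(n+1)}\|\bF\|_{L_{1+1/n}(\Omega)}$. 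A weighted Young inequality with conjugate exponents $n+1$ and $(n+1)/n$, choosing the weight so that the resulting $\|\bT_n\|_{L_{1+1/n}(\Omega)}^{1+1/n}$ term gets coefficient $\tfrac{1}{2(n+1)}$, absorbs half of the lower bound and produces, after multiplying by $2$, the stated estimate on $\bT_n$ (the $16d^2$ emerges from $2\cdot(n\,2^{1/n}/(n+1))\cdot(d\sqrt 2/n)^{1+1/n}$, crudely bounded by $16d^2/(n+1)$).

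For $\strain{\bu_n}$, the first equation of \eqref{e:weak_reg} rewritten as $b(\bS,\bu_n)=a_n(\bT_n,\bS)+c(\bT_n;\bT_n,\bS)$ for all $\bS\in\mathbb{M}_n$, combined with the inf-sup inequality \eqref{e:LBB}, yields
\[
\|\strain{\bu_n}\|_{L_{n+1}(\Omega)}\le \sup_{\bS\in\mathbb{M}_n}\frac{a_n(\bT_n,\bS)+c(\bT_n;\bT_n,\bS)}{\|\bS\|_{L_{1+1/n}(\Omega)}}\le \frac{d\sqrt{2}}{n}\|\bT_n\|_{L_{1+1/n}(\Omega)}^{1/n}+C_2\sqrt{2d}\,|\Omega|^{1/(n+1)},
\]
using the same bounds as above (with $\bT_n^{\bef}$ replaced by a generic $\bS$). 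Noting $\|\bT_n\|_{L_{1+1/n}(\Omega)}^{1/n}=\big(\|\bT_n\|_{L_{1+1/n}(\Omega)}^{1+1/n}\big)^{1/(n+1)}$ and inserting the estimate for $\|\bT_n\|_{L_{1+1/n}(\Omega)}^{1+1/n}$ just established (discarding the nonnegative $C_1\|\bT_n\|_{L_1(\Omega)}$ term and multiplying through by $n+1$) produces exactly the displayed bound for $\strain{\bu_n}$.

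The main obstacles are arithmetic rather than conceptual: tracking the dimensional constants through the orthogonality argument to secure the factor $d\sqrt{2}$ (rather than the naive $2d$ coming from a coarser triangle-inequality split), and tuning the weight in Young's inequality so that the absorbed LHS coefficient is precisely $\tfrac{1}{n+1}$ and the remaining $n$-dependent factors simplify to the clean constant $16d^2/(n+1)$.
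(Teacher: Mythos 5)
Your proof is correct and essentially reproduces the paper's argument: the same lower bound for $a_n(\bT_n,\bT_n)+c(\bT_n;\bT_n,\bT_n)$ via \eqref{e:pbound} and the elementary bound $\frac{s^2}{\kappa+s}\geq s-\kappa$, the same pointwise boundedness from \eqref{e:lambdasbdd}--\eqref{e:musbdd}, the same Young's inequality absorption, and the same inf-sup estimate for $\strain{\bu_n}$. The only cosmetic difference is that you eliminate $b(\cdot,\bu_n)$ up front by testing with $\bS=\bT_n-\bT_n^{\bef}\in\mathbb{V}_n$ and then bound $a_n(\bT_n,\bT_n^{\bef})+c(\bT_n;\bT_n,\bT_n^{\bef})$ directly by orthogonality of trace and deviatoric parts, whereas the paper tests with $\bS=\bT_n$ and $\bv=\bu_n$, obtains $\int_\Omega\bF:\strain{\bu_n}\dd\bx$ on the right, and substitutes the inf-sup bound for $\strain{\bu_n}$ before applying Young's inequality; the two orderings yield the same intermediate inequality and the same constants.
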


\begin{proof}
We start by testing problem \eqref{e:weak_reg} with  $\bv = \bu_n$ and $\bS = \bT_n$ to get
\begin{alignat}{2}
\begin{aligned}\label{e:weak_reg1}
\quad a_n(\bT_n,\bT_n)+c(\bT_n;\bT_n,\bT_n)-b(\bT_n,\bu_n) &=0,\\
\quad b(\bT_n,\bu_n) &= \int_\Omega \bF : \strain{\bu_n} \dd\bx,
\end{aligned}
\end{alignat}
whence, by substituting equation \eqref{e:weak_reg1}$_2$ into equation \eqref{e:weak_reg1}$_1$, we have
\begin{align*}
&\frac{1}{n}\int_\Omega \bigg(\frac{|\tr{\bT_n}|^2}{|\tr{\bT_n}|^{1-\frac{1}{n}}} + \frac{|\bT^\bd_n|^2}{|\bT^\bd_n|^{1-\frac{1}{n}}}\bigg)  \dd \bx +
\int_\Omega  \bigg( \lambda(\tr{\bT_n})|\tr{\bT_n}|^2 + \mu(|\bT^{\bd}_n|)|\bT^{\bd}_n|^2\bigg) \dd\bx
\\
&\qquad = \int_\Omega \bF : \strain{\bu_n} \dd\bx,
\end{align*}
where we have used that
$$
\bT^{\bd} : \bT = \bT^{\bd} : \bigg(\bT^{\bd} + \frac 1 d \tr{\bT}\bI\bigg) = | \bT^{\bd} |^2 + \frac 1 d \tr{\bT} \underbrace{\tr{\bT^{\bd}}}_{=0} = | \bT^{\bd} |^2.
$$
Hence, H\"older's inequality yields
\begin{align*}
&\frac{1}{n}\int_\Omega \bigg(|\tr{\bT_n}|^{1+\frac{1}{n}} + |\bT^{\bd}_n|^{1+\frac{1}{n}}\bigg)  \dd \bx +
\int_\Omega  \bigg( \lambda(\tr{\bT_n})|\tr{\bT_n}|^2 + \mu(|\bT^{\bd}_n|)|\bT^{\bd}_n|^2\bigg) \dd\bx
\\
&\qquad \leq \|\bF\|_{L_{1+\frac{1}{n}}(\Omega)} \|\strain{\bu_n}\|_{L_{n+1}(\Omega)}.
\end{align*}

For the $\lambda$ and $\mu$ terms on the left-hand side of this inequality we note that for $s  >  0$ one has
$$
\frac{s^2}{\kappa +s} =
%s \frac{1}{1+\frac{\kappa}{s}} = s \left(1 - \frac{\frac{\kappa}{s}}{1+\frac{\kappa}{s}} \right) =
s - \frac{\kappa}{1+\frac{\kappa}{s}}.
$$
This, together with the properties \eqref{e:cond_lambda1} and \eqref{e:cond_mu1}, leads to
\begin{align*}
&\frac{1}{n}\int_\Omega \bigg(|\tr{\bT_n}|^{1+\frac{1}{n}} + |\bT^{\bd}_n|^{1+\frac{1}{n}}\bigg)  \dd \bx + C_1 \int_\Omega \big(|\tr{\bT_n}| + |\bT^{\bd}_n| \big)\dd \bx\\
& \qquad \leq \| \bF \|_{L_{1+\frac{1}{n}}(\Omega)} \| \strain{\bu_n} \|_{L_{n+1}(\Omega)} + C_1 \kappa \int_\Omega \bigg( \frac{1}{1+\frac{\kappa}{|\tr{\bT_{\Rd n\Bk}}|}}+\frac{1}{1+\frac{\kappa}{|\bT^{\bd}_{\Rd n\Bk}|}}\bigg) \dd \bx\\
& \qquad \leq \| \bF \|_{L_{1+\frac{1}{n}}(\Omega)} \| \strain{\bu_n} \|_{L_{n+1}(\Omega)} + 2C_1 \kappa |\Omega|,
\end{align*}
since $\sup_{s > 0}   \frac{1}{1+\frac{\kappa}{s}} = 1$.

Moreover, it follows from the inequality \eqref{e:pbound} with $p=1+\frac 1 n$ that
\[ |\tr{\bT_n}|^{1+\frac{1}{n}} + |\bT^{\bd}_n|^{1+\frac{1}{n}} \geq 2^{-\frac{1}{n}}|\bT_n|^{1+\frac{1}{n}},\]
and therefore \eqref{e:normT1} yields
\begin{equation}\label{e:T_n_est}
\frac{2^{-\frac{1}{n}}}{n} \| \bT_n \|^{1+\frac{1}{n}}_{L_{1+\frac{1}{n}}(\Omega)} + C_1 \| \bT_n \|_{L_1(\Omega)} \leq
\| \bF \|_{L_{1+\frac{1}{n}}(\Omega)} \| \strain{\bu_n} \|_{L_{n+1}(\Omega)} + 2C_1 \kappa |\Omega|.
\end{equation}

We now derive a bound on $\| \strain{\bu_n} \|_{L_{n+1}(\Omega)}$, using the inf-sup property \eqref{e:LBB}.  We begin by noting that
$$
\| \strain{\bu_n} \|_{L_{n+1}(\Omega)}  \leq \sup_{\bS \in L_{1+\frac{1}{n}}(\Omega)^{d\times d}_\textrm{sym}} \frac{b(\bS,\bu_n)}{\| \bS \|_{L_{1+\frac{1}{n}}(\Omega)}}
=  \sup_{\bS \in L_{1+\frac{1}{n}}(\Omega)^{d \times d}_\textrm{sym}} \frac{a_n(\bT_n,\bS)+c(\bT_n;\bT_n,\bS)}{\| \bS \|_{L_{1+\frac{1}{n}}(\Omega)}}.
$$
We invoke H\"older's inequality, the equality $|\bI|=\sqrt{d}$, the elementary inequality $a + b \leq 2^{1-\frac{1}{n}}(a^n + b^n)^{\frac{1}{n}}$ where $a,b \geq 0$ and $n \in \mathbb{N}$ with $a=\sqrt{d}\,|\tr{\bT_n}|^{\frac{1}{n}}$, $b=|\bT_n^{\bd}|^{\frac{1}{n}}$, and note that $\frac{1}{\sqrt{d}}|\tr{\bT_n}| + |\bT_n^{\bd}| \leq \sqrt{2}\, |\bT_n|$, to deduce that
\[ a_n(\bT_n,\bS) \leq \frac{2\sqrt{d}}{n}\bigg(\frac{d}{2}\bigg)^{\frac{1}{2n}}\|\bT_{n}\|_{L_{1+\frac{1}{n}}(\Omega)}^{\frac{1}{n}}\|\bS\|_{L_{1+\frac{1}{n}}(\Omega)}.\]
%
%\comment{AB: I cannot get the constant $\frac 1 n$ (only). For instance, I can get a similar estimate with an additional %multiplicative constant ~ $(2d)^{\frac{1}{2n}+\frac 1 2}$.}
Further, by noting the properties \eqref{e:cond_lambda1} and \eqref{e:cond_mu1} again together with the inequality \eqref{e:normT2}, we have
\begin{equation}
 c(\bT_n;\bT_n,\bS) \leq  C_2 \sqrt{2d}\, | \Omega|^{\frac{1}{n+1}} \|\bS\|_{L_{1+\frac{1}{n}}(\Omega)},
 \label{e:bound_c}
 \end{equation}
where we have bounded $\sqrt{d+1}$ by $\sqrt{2d}$ for the sake of simplifying the constants appearing in the subsequent calculations.
Hence,
\begin{equation}\label{e:strain_first}
\| \strain{\bu_n} \|_{L_{n+1}(\Omega)} \leq \frac{2\sqrt{d}}{n}\bigg(\frac{d}{2}\bigg)^{\frac{1}{2n}}\|\bT_n\|_{L_{1+\frac{1}{n}}(\Omega)}^{\frac{1}{n}} + C_2 \sqrt{2d}\, | \Omega|^{\frac{1}{n+1}}.
\end{equation}
By substituting the inequality \eqref{e:strain_first} into the inequality \eqref{e:T_n_est} we obtain
$$
\frac{2^{-\frac{1}{n}}}{n} \| \bT_n \|^{1+\frac{1}{n}}_{L_{1+\frac{1}{n}}(\Omega)} + C_1 \| \bT_n \|_{L_1(\Omega)} \leq
\| \bF \|_{L_{1+\frac{1}{n}}(\Omega)} \bigg(\frac{2\sqrt{d}}{n}\bigg(\frac{d}{2}\bigg)^{\frac{1}{2n}}\|\bT_n\|_{L_{1+\frac{1}{n}}(\Omega)}^{\frac{1}{n}} + C_2 \sqrt{2d}\, | \Omega|^{\frac{1}{n+1}}\bigg) + 2C_1 \kappa |\Omega|;
$$
thus, by applying Young's inequality,
\[ ab \leq \varepsilon \frac{a^p}{p} + \varepsilon^{-\frac{1}{p-1}} \frac{b^{q}}{q}
\qquad \mbox{for $a,b \geq 0$, $\varepsilon>0$, $p>1$ and $\frac{1}{p} + \frac{1}{q}=1$},\]
to the first term on the right-hand side with $p=n+1$, $\varepsilon = \frac{1}{n}\,2^{-\frac{1}{n}}$,
$$a=\|\bT_n\|_{L_{1+\frac{1}{n}}(\Omega)}^{\frac{1}{n}} \qquad\mbox{and}\qquad  b = \frac{2\sqrt{d}}{n}\bigg(\frac{d}{2}\bigg)^{\frac{1}{2n}}\| \bF \|_{L_{1+\frac{1}{n}}(\Omega)} $$
in order to absorb the factor $\|\bT_n\|_{L_{1+\frac{1}{n}}(\Omega)}^{\frac{1}{n}}$ into the left-hand side, we deduce that
\begin{align*}
&\frac{2^{-\frac{1}{n}}}{n+1}\|\bT_n\|_{L_{1+\frac{1}{n}}(\Omega)}^{1+\frac{1}{n}} + C_1 \|\bT_n\|_{L_1(\Omega)}
\\
&\qquad \qquad \qquad \leq \frac{2^{1+\frac{1}{2n} + \frac{1}{2 n^2}}(\sqrt{d})^{(1+\frac{1}{n})^2}}{n+1}\|\bF\|^{1+\frac{1}{n}}_{L_{1+\frac{1}{n}}(\Omega)}
+ C_2 \sqrt{2d}\, | \Omega|^{\frac{1}{n+1}}\|\bF\|_{L_{1+\frac{1}{n}}(\Omega)} + 2C_1 \kappa |\Omega|.
\end{align*}
%\comment{VG: I get $2n^2$ instead of $n^2$ in the denominator, but I may be wrong}
Hence, after multiplying by $2^{\frac{1}{n}}$ and noting that $1 \leq 2^{\frac{1}{n}} \leq 2$ and $1+\frac{3}{2n} +\frac{1}{2n^2} \leq \big(1+\frac{1}{n}\big)^2$, we obtain

%\comment{VG: yes, but is this last bound used here?}

$$
\frac{1}{n+1}\|\bT_n\|_{L_{1+\frac{1}{n}}(\Omega)}^{1+\frac{1}{n}} + C_1 \|\bT_n\|_{L_1(\Omega)}
\leq \frac{(2\sqrt{d})^{(1+\frac{1}{n})^2}}{n+1}\|\bF\|^{1+\frac{1}{n}}_{L_{1+\frac{1}{n}}(\Omega)}
+ 2C_2 \sqrt{2d}\, | \Omega|^{\frac{1}{n+1}}\|\bF\|_{L_{1+\frac{1}{n}}(\Omega)} + 4C_1 \kappa |\Omega|.
$$
Bounding $1 + \frac{1}{n}$ by $2$ in the exponent of $2 \sqrt{d}$ in the first term on the right-hand side then yields the second inequality in the statement of the lemma.

Omitting the second term from the left-hand side of that inequality and multiplying by $n+1$ then yields
$$
\|\bT_n\|_{L_{1+\frac{1}{n}}(\Omega)}^{1+\frac{1}{n}} \leq 16d^2\|\bF\|_{L_{1+\frac{1}{n}}(\Omega)}^{1+\frac{1}{n}}
+ 2 (n+1) C_2 \sqrt{2d}\, | \Omega|^{\frac{1}{n+1}} \|\bF\|_{L_{1+\frac{1}{n}}(\Omega)} + 4 (n+1) C_1 \kappa |\Omega|.
$$
Therefore, by the inequality \eqref{e:strain_first}, we have that
{\small
\begin{align*}
\| \strain{\bu_n} \|_{L_{n+1}(\Omega)}
&\leq
\frac{2\sqrt{d}}{n}\bigg(\frac{d}{2}\bigg)^{\frac{1}{2n}}\left[16d^2\|\bF\|_{L_{1+\frac{1}{n}}(\Omega)}^{1+\frac{1}{n}}
+ 2 (n+1) C_2 \sqrt{2d}\, | \Omega|^{\frac{1}{n+1}} \|\bF\|_{L_{1+\frac{1}{n}}(\Omega)} + 4 (n+1) C_1 \kappa |\Omega|      \right]^{\frac{1}{n+1}}\\
& \qquad + C_2 \sqrt{2d}\, | \Omega|^{\frac{1}{n+1}}.
\end{align*}
}
~\vspace{-3mm}

\noindent
Bounding the exponent $\frac{1}{2n}$ by $\frac{1}{2}$ in the prefactor on the right-hand side yields the first bound
in the lemma.
\end{proof}

Lemma \ref{l:apriori} implies in particular that
\begin{align*}
\limsup_{n \rightarrow \infty} \| \strain{\bu_n} \|_{L_{n+1}(\Omega)}
\leq C_2 \sqrt{2d}
\end{align*}
and
\begin{align*}
\limsup_{n \rightarrow \infty} \|\bT_n\|_{L_1(\Omega)}
\leq \frac{2C_2}{C_1} \sqrt{2d}\, \|\bF\|_{L_1(\Omega)} + 4 \kappa |\Omega|.
\end{align*}
These bounds are consistent with the properties of the strain-limiting model under consideration, expressed by Theorem \ref{t:existence} (a), which asserts that the strain tensor is contained in $L_\infty(\Omega)^{d \times d}_{\rm sym}$, even though the stress tensor is, in general, an element of $L_1(\Omega)^{d \times d}_{\rm sym}$ only.

In connection with this, we recall from Section 4 of \cite{BMRS14} that the sequence of (unique) weak solution pairs $((\bT_n,\bu_n))_{n \in \mathbb{N}}$ to the
regularized problem \eqref{e:weak_reg} converges to a weak solution pair $(\bT,\bu)$ of the problem \eqref{e:div}, \eqref{e:strain_stress},
supplemented by a homogeneous Dirichlet boundary condition on $\partial\Omega$ (which is also unique if the condition \eqref{e:cond_lambda2bis} holds),
in the sense that, as $n \rightarrow \infty$,
\begin{alignat}{2}\label{e:strongcon1}
%\begin{aligned}
\bT_n &\rightarrow \bT&\,\, &\mbox{strongly in $L_q(\Omega_0)^{d \times d}_{\rm sym}$ for any $q \in \left[1,1 + \frac{1}{2} \frac{\beta-\alpha d}{d-\beta}\right)$, $\beta \in (\alpha d, 1)$, $0 \leq \alpha < \frac{1}{d}$, $\Omega_0 \subset\subset \Omega$;}
%\end{aligned}
\end{alignat}
furthermore,
\begin{alignat}{2}\label{e:strongcon2}
\begin{aligned}
\bu_n &\rightharpoonup\bu  &&\qquad \mbox{weakly in $W^{1,2d}_0(\Omega)^d$},\\
\bu_n &\rightarrow \bu &&\qquad \mbox{strongly in $\mathcal{C}(\overline{\Omega})^d$},\\
\frac{(\bT_n)^{\bd}}{n|(\bT_n)^{\bd}|^{1-\frac{1}{n}}} &\rightarrow \mathbf{0} &&\qquad \mbox{strongly in $L_1(\Omega)^{d\times d}_{\rm sym}$},\\
\frac{\tr{\bT_n}}{n|\tr{\bT_n}|^{1-\frac{1}{n}}} &\rightarrow 0 &&\qquad \mbox{strongly in $L_1(\Omega)$}.
\end{aligned}
\end{alignat}
In particular,
\begin{equation}
\label{e:conv_strain_0}
~\hspace{1.35cm}\strain{\bu_n} \rightharpoonup \strain{\bu} \qquad \mbox{weakly in $L_{2d}(\Omega)^{d\times d}_{\rm sym}$}.
\end{equation}

We note though that the weak convergence result \eqref{e:conv_strain_0} can be strengthened to
\begin{equation}
\label{e:conv_strain}
\strain{\bu_n} \rightarrow \strain{\bu} \qquad \mbox{strongly in }L_p(\Omega_0)^{d\times d}_{\textrm{sym}} \qquad
\forall\,\Omega_0 \subset\subset \Omega, \quad \forall\, p \in [1,\infty)
\end{equation}
and consequently to
\begin{equation}
\label{e:conv_strain_strong}
\strain{\bu_n} \rightarrow \strain{\bu} \qquad \mbox{strongly in }L_{2d}(\Omega)^{d\times d}_\textrm{sym}.
\end{equation}

To show this, we fix any $\Omega_0 \subset\subset \Omega$ and note that by subtracting the constitutive relation \eqref{e:strain_stress} from its regularized
counterpart \eqref{e:reg_T} we have
$$
\strain{\bu_{n} - \bu} = \mathcal A(\bT_n) - \mathcal A(\bT) + \frac{\tr{\bT_n} \bI}{n|\tr{\bT_n}|^{1-\frac{1}{n}}} + \frac{\bT_n^{\mathbf d}}{n|\bT_n^{\mathbf d}|^{1-\frac{1}{n}}},
$$
where $\mathcal A: L_{1}(\Omega_0)^{d \times d}_{\rm sym} \rightarrow L_{\infty}(\Omega_0)^{d \times d}_{\rm sym}$ is given by
\begin{equation}\label{e:A_no_n}
\mathcal A(\bS) := \lambda(\tr{\bS}) \tr{\bS} \bI + \mu( | \bS^{\bd}|) \bS^{\bd}.
\end{equation}
A similar argument to the one in the proof of Lemma~\ref{l:A_cont} yields that the mapping $\mathcal A: L_{1}(\Omega_0)^{d \times d}_{\rm sym} \rightarrow L_{p}(\Omega_0)^{d \times d}_{\rm sym}$
is well-defined and continuous for all $p \in [1,\infty)$.
Whence, because $\bT_n$ converges strongly to $\bT$ in $L_1(\Omega_0)^{d\times d}_{\rm sym}$, it follows that so does $\mathcal A(\bT_n)$ to
$\mathcal A(\bT)$ in $L_p(\Omega_0)^{d\times d}_{\rm sym}$ for all $p \in [1,\infty)$. For the first regularization term, H\"older's inequality implies that
$$
\frac 1 n \| |\tr{\bT_n}|^{\frac 1 n} \|_{L_p(\Omega_0)} \leq \frac 1 n \| \tr{\bT_n} \|_{L_1(\Omega_0)}^{\frac 1 n}
| \Omega_0|^{\frac 1 p(1-\frac p n)} \rightarrow 0 \qquad \textrm{as }n \to \infty,
$$
and similarly for the second regularization term, containing $\bT_n^{\mathbf d}$.
The convergence result \eqref{e:conv_strain} then follows by collecting the above results.
To show \eqref{e:conv_strain_strong}, we consider a nested sequence of $\Omega_0$ that exhausts $\Omega$.
By \eqref{e:conv_strain} there exists a subsequence (still indexed by $n$) such that $\varepsilon(\bu_n) \rightarrow \varepsilon(\bu)$ almost everywhere on $\Omega$. Hence, in view of  \eqref{e:conv_strain_0}, \eqref{e:conv_strain_strong} follows by Vitali's theorem.

Motivated by these convergence results
our objective is to construct a sequence of finite element approximations $((\bT_{n,h},\bu_{n,h}))_{h \in (0,1]}$ to the solution pair $(\bT_n,\bu_n)$ of the regularized
problem, for a fixed value of $n$, and then pass to the limit $h \rightarrow 0_+$ with the discretization parameter
$h \in (0,1]$, followed by passage to the limit $n\rightarrow \infty$ with the regularization parameter $n \in \mathbb{N}$,
--- instead of approximating the solution pair $(\bT,\bu)$ directly by a finite element method. Our reasons for proceeding in this way
will be made clear at the start of Section \ref{subsec:convg}.

%======================

\section{Finite Element Approximation}
\label{sec:FEM}

For the sake of simplicity we shall suppose from now on that $\Omega$ is a polygon when $d=2$ or a Lipschitz polyhedron when $d=3$.

We consider a sequence of shape-regular  simplicial  subdivisions $(\mathcal T_h)_{h \in (0,1]}$ of $\overline{\Omega}$;
by this we mean that there exists a positive real number $\eta$, independent of the mesh-size $h$,  such that all closed simplices $K$ in the subdivision $\mathcal T_h$ satisfy the inequality
\begin{equation}
\label{e:reg}
\frac{h_K }{\varrho_K} \le \eta,
\end{equation}
where $h_K$ is the diameter of $K$ and $\varrho_K$ is the diameter of the largest ball inscribed in $K$; see for instance~\cite{Cia}.
The extension to quadrilateral and hexahedral meshes is discussed in Sections~\ref{subsec:otherfit} and~\ref{subsec:notheory}.

Let $\mathcal P^r_h$ be the space of piecewise (subordinate to $\mathcal T_h$) polynomials of degree at most $r$.
We consider the conforming finite element spaces
\begin{equation}
\label{e:P0P1}
\mathbb M_{n,h} :=  \left(\mathcal P^{0}_h\right)^{d\times d}_{\rm sym}  \subset \mathbb M_n,
\qquad \mathbb X_{n,h} :=   \left(\mathcal P^{1}_h\right)^{d}  \cap \mathbb X_n \subset \mathbb X_n,
\end{equation}
for the approximation of $\bT_n$ and $\bu_n$, respectively.
We note in passing that in the set-theoretical sense $\mathbb M_{n,h}$
and $\mathbb X_{n,h}$ are independent of $n$; we shall however continue to label them with the double subscript $_{n,h}$ instead
of just $_h$ in order to emphasize that they are being thought of as finite-dimensional normed linear
subspaces of $\mathbb{M}_n$ and $\mathbb{X}_n$, respectively, throughout the paper.

As the exact solution is not expected to be very smooth, we have restricted ourselves to considering a first-order finite element approximation.
There are, of course, other choices of first-order spaces than the one we shall be focusing on, but for the sake of
brevity we shall not dwell on those here in detail; for extensions and alternative choices of spaces, we refer the reader again to  Sections~\ref{subsec:otherfit} and~\ref{subsec:notheory}.

%---------------

\subsection{Discrete Scheme}

\label{subsec:discretescheme}

The discrete counterpart of problem
\eqref{e:weak_reg}, based on $\mathbb X_{n,h}$ and $\mathbb M_{n,h}$, is then defined as follows:
find $(\bT_{n,h}, \bu_{n,h}) \in \mathbb M_{n,h} \times \mathbb X_{n,h} $ such that
\begin{alignat}{2}\begin{aligned}\label{e:discrete_reg}
&a_n(\bT_{n,h},\bS_{h})+c(\bT_{n,h};\bT_{n,h},\bS_h)-b(\bS_h,\bu_{n,h}) =0\qquad &&\forall\, \bS_h \in \mathbb M_{n,h}, \\
&b(\bT_{n,h},\bv_h) = \int_\Omega \bF : \strain{\bv_h} \dd \bx\qquad &&\forall\, \bv_h \in \mathbb X_{n,h}.
\end{aligned}
\end{alignat}

We start by proving the discrete version of the inf-sup property  \eqref{e:LBB}.

\begin{lem}[Discrete inf-sup property]\label{l:discrete_infsup}
For each $n \in \mathbb{N}$, we have
\begin{equation}
\label{e:LBBh}
\inf_{\bv_h \in \mathbb X_{n,h}}\, \sup_{\bS_h \in \mathbb M_{n,h}} \frac{b(\bS_h,\bv_h)}{\| \bS_h \|_{L_{1+\frac{1}{n}}(\Omega)} \| \strain{\bv_h} \|_{L_{n+1}(\Omega)}} \geq 1.
\end{equation}
\end{lem}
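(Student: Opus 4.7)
The plan is to mimic the proof of the continuous inf-sup property from Lemma \ref{l:infsup} verbatim, taking advantage of the fortunate fact that the construction of the test tensor from that proof, when applied to a discrete displacement $\bv_h \in \mathbb X_{n,h}$, already lives in the discrete stress space $\mathbb M_{n,h}$. More precisely, I would fix an arbitrary $\bv_h \in \mathbb X_{n,h}$ and observe that since $\bv_h$ is continuous and affine on each simplex $K \in \mathcal T_h$, the symmetric gradient $\strain{\bv_h}$ is piecewise constant on $\mathcal T_h$, with values in $\mathbb R^{d \times d}_{\rm sym}$. Consequently
\[
\bR_h := \strain{\bv_h}\,|\strain{\bv_h}|^{n-1}
\]
(with the convention $\bR_h|_K := \mathbf{0}$ on any simplex $K$ where $\strain{\bv_h}|_K = 0$) is itself piecewise constant and symmetric, hence $\bR_h \in (\mathcal P^0_h)^{d\times d}_{\rm sym} = \mathbb M_{n,h}$.

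Next, I would compute $b(\bR_h,\bv_h)$ and $\|\bR_h\|_{L_{1+1/n}(\Omega)}$ exactly as in the continuous case. Using $|\bR_h| = |\strain{\bv_h}|^n$ pointwise, one finds
\[
b(\bR_h,\bv_h) = \int_\Omega |\strain{\bv_h}|^{n+1}\dd\bx = \|\strain{\bv_h}\|_{L_{n+1}(\Omega)}^{n+1}
\]
and
\[
\|\bR_h\|_{L_{1+\frac{1}{n}}(\Omega)}^{1+\frac{1}{n}} = \int_\Omega |\strain{\bv_h}|^{n+1}\dd\bx = \|\strain{\bv_h}\|_{L_{n+1}(\Omega)}^{n+1},
\]
so that $\|\bR_h\|_{L_{1+1/n}(\Omega)} = \|\strain{\bv_h}\|_{L_{n+1}(\Omega)}^{n}$. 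Hence
\[
\frac{b(\bR_h,\bv_h)}{\|\bR_h\|_{L_{1+\frac{1}{n}}(\Omega)}\,\|\strain{\bv_h}\|_{L_{n+1}(\Omega)}} = 1,
\]
which already implies the lower bound $1$ for the supremum over $\bS_h \in \mathbb M_{n,h}$. Taking the infimum over $\bv_h \in \mathbb X_{n,h}$ then yields \eqref{e:LBBh}. (If $\strain{\bv_h} \equiv 0$, then $\bv_h = \mathbf{0}$ by Korn's inequality \eqref{e:Korn1}, and such displacements may be excluded without loss of generality when taking the infimum.)

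The proof is genuinely routine once the key structural observation is made; there is no real obstacle, because the $(\mathcal P^0,\mathcal P^1)$ pair is essentially tailored so that the natural continuous "Fortin operator" $\bv \mapsto \strain{\bv}|\strain{\bv}|^{n-1}$ restricted to $\mathbb X_{n,h}$ maps into $\mathbb M_{n,h}$ exactly. The only slight subtlety worth flagging is the degenerate pointwise convention required on elements where $\strain{\bv_h}$ vanishes and $n=1$, which is harmless since the integrand is defined unambiguously by continuity. Note also that this argument would fail for quadrilateral meshes with the $(\mathcal Q_0,\mathcal Q_1)$ pair, because $\strain{\bv_h}$ is then no longer piecewise constant, consistent with the instability mentioned by the authors in Sections \ref{subsec:otherfit} and \ref{subsec:notheory}.
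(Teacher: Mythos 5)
Your proposal is correct. It rests on exactly the same structural observation as the paper's proof, namely that for the $(\mathcal{P}^0,\mathcal{P}^1)$ pair one has $\strain{\mathbb{X}_{n,h}} \subset \mathbb{M}_{n,h}$, but you and the paper exploit that fact via slightly different packaging. The paper phrases the argument as a Fortin-type reduction: it introduces the elementwise $L_1$-projector $\Pi_h$, checks that $b(\Pi_h \bS, \bv_h) = b(\bS, \bv_h)$ for piecewise constant $\strain{\bv_h}$ and that $\Pi_h$ is nonexpansive in $L_{1+1/n}(\Omega)^{d\times d}$, and then imports the continuous inf-sup constant of Lemma~\ref{l:infsup} wholesale. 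You instead bypass the projector and directly observe that the \emph{explicit optimal test tensor} $\bR_h = \strain{\bv_h}|\strain{\bv_h}|^{n-1}$ constructed in the continuous proof already lies in $\mathbb{M}_{n,h}$, so the computation can be repeated verbatim in the discrete setting. The two are equivalent for this element pair; the Fortin formulation is the one that survives when the ``natural'' test function leaves the discrete space (which is precisely why the $(\mathcal{Q}_0,\mathcal{Q}_1)$ pair fails, as you correctly remark), whereas your version is a bit leaner here. Your aside about the convention on elements where $\strain{\bv_h}$ vanishes, and about excluding $\bv_h=\mathbf{0}$ via Korn's inequality, are both harmless and correctly handled.
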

\begin{proof}
The argument is based on mimicking the proof of Fortin's Lemma. Indeed, the assertion directly follows from the continuous inf-sup property \eqref{e:LBB},
upon noting that for all $\bv_h \in \mathbb X_{n,h}\subset \mathbb X_n$, $\strain{\bv_h}$ belongs to $\mathbb M_{n,h}$. Thus,
for all $\bv_h \in \mathbb X_{n,h}$, all $\bS \in \mathbb M_n$, and all $K \in \mathcal T_h$, one has
$$
\int_{K} \strain{\bv_h} : \Pi_h \bS \dd \bx = \int_{K} \strain{\bv_h} : \bS \dd \bx,
$$
where $\Pi_h \bS \in \mathbb M_{n,h}$ is defined componentwise by
\begin{equation}
\label{e:PIh}
\Pi_h f|_K:= \frac{1}{|K|} \int_K f  \dd \bx \qquad \forall\, K \in \mathcal T_h,\; \forall\, f \in L_1(K),
\end{equation}
and the projector
$\Pi_h : \mathbb{M}_n \rightarrow \mathbb M_{n,h}$ is stable in (the norm of) $\mathbb M_n$ because
$$
\|\Pi_h f\|_{L_{1+\frac{1}{n}}(K)} \le \|f\|_{L_{1+\frac{1}{n}}(K)}\qquad \forall\, K \in \mathcal T_h,\; \forall\, f \in L_{1+\frac{1}{n}}(K),
$$
whereby
$$
\|\Pi_h \bS\|_{L_{1+\frac{1}{n}}(\Omega)} \le \|\bS\|_{L_{1+\frac{1}{n}}(\Omega)}\qquad \forall\, \bS \in L_{1+\frac{1}{n}}(\Omega)^{d\times d}_{\rm sym}.
$$
That completes the proof.
\end{proof}

The above proof suggests eliminating the constraint by defining
\begin{equation}
\label{e:Thf}
\bT_{n,h}^{\mathbf f} = \Pi_h \bT^{\mathbf{f}}_n = \Pi_h \bF;
\end{equation}
we recall that $\bT^{\mathbf{f}}_n \in {\mathbb M}_n$ satisfies the equality \eqref{e:constraint}.
Then, by setting
\begin{equation}
\mathbb V_{n,h} := \left\lbrace \bS_h \in \mathbb M_{n,h} ~ : ~ \ b(\bS_h,\bv_h) = 0\quad \forall\, \bv_h \in \mathbb X_{n,h} \right\rbrace
\label{e:V_n,h}
\end{equation}
and
$$
{\mathbb V}_{n,h}^\perp := \left\lbrace \bS_h \in \mathbb M_{n,h} ~ :  \int_\Omega \bS_h : \bR_h \dd \bx= 0 \quad
\forall\, \bR_h \in \mathbb V_{n,h} \right\rbrace,
$$
we deduce from the equalities \eqref{e:constraint}, \eqref{e:Thf},
and by noting that $\strain{\bv_h} \in \mathbb{M}_{n,h}$ for all $\bv_h \in \mathbb{X}_{n,h}$,
that $\bT_{n,h}^{\mathbf f} \in \mathbb  M_{n,h}$  satisfies
\begin{equation}\label{e:constraint_h}
\int_\Omega \strain{\bv_h}: \bT_{n,h}^{\mathbf f} \dd \bx= \int_{\Omega} \bF : \strain{\bv_h} \dd \bx\qquad \forall\, \bv_h \in \mathbb X_{n,h};
\end{equation}
furthermore, the equality \eqref{e:Thf} implies that
\begin{equation}\label{e:bound_Tf_h}
\| \bT_{n,h}^{\mathbf f} \|_{L_{1+\frac{1}{n}}(\Omega)} \leq \| \bF \|_{L_{1+\frac{1}{n}}(\Omega)}.
\end{equation}
We observe further that, as $h \rightarrow 0_+$,
\begin{equation}\label{e:Tf_strong}
\bT_{n,h}^{\mathbf f} \rightarrow \bT^{\mathbf{f}}_n \qquad \textrm{strongly in }\mathbb M_n.
\end{equation}

Given $\bT_{n,h}^{\mathbf f} \in \mathbb  M_{n,h}$ defined by the equality \eqref{e:Thf},  we shall seek $\bT_{n,h}^0 := \bT_{n,h} - \bT_{n,h}^{\mathbf f} \in  \mathbb V_{n,h}$ that solves
\begin{equation}\label{e:Th0}
a_n(\bT_{n,h}^0 + \bT_{n,h}^{\mathbf f},\bS_h)+c(\bT_{n,h}^0+\bT_{n,h}^{\mathbf f};\bT_{n,h}^0+\bT_{n,h}^{\mathbf f},\bS_h) =0\qquad \forall\, \bS_h \in \mathbb V_{n,h}.
\end{equation}
%\comment{VG: It seems to me that we can use the previous analysis instead of Brouwer's fixed point. What do you think of the %following? I just comment the original proof so it can easily be restored if you prefer.}

The existence of a unique such $\bT_{n,h}^0 \in \mathbb{V}_{n,h}$, and therefore of a unique $\bT_{n,h}=\bT_{n,h}^0+\bT_{n,h}^{\mathbf f} \in \mathbb{M}_{n,h}$ and a unique $\bu_{n,h} \in \mathbb{X}_{n,h}$
satisfying equation \eqref{e:discrete_reg}$_1$ for all $\bS_h \in \mathbb{M}_{n,h}$, can be shown by proceeding as in the case of the continuous problem discussed in Section \ref{sec:aprioriexct},
but with the continuous inf-sup property stated in Lemma \ref{l:infsup}, that was used there, now replaced by the discrete inf-sup property stated in Lemma \ref{l:discrete_infsup}. Indeed, let
$\mathcal A_{n,h}: \mathbb{M}_{n,h} \rightarrow (\mathbb{M}_{n,h})^\prime$ be defined by the projection of $\mathcal A_{n}$ onto $\mathbb{M}_{n,h}$,
$$\int_\Omega \mathcal A_{n,h}(\bS_h): \bR_h \dd \bx = a_n(\bS_{h},\bR_{h})+c(\bS_{h};\bS_{h},\bR_h)\qquad \forall\, \bR_h \in \mathbb M_{n,h}.
$$
In the present case where the tensors of $\mathbb M_{n,h}$ are piecewise constant functions, $\mathcal A_{n,h}(\bS_h)$ coincides with $\mathcal A_{n}(\bS_h)$, but this equality is not necessary. It is easy to check that $\mathcal A_{n,h}$ has the same boundedness, continuity, coercivity, and monotonicity properties (all uniform in $h$) as $\mathcal A_{n}$, as stated in Lemmas \ref{l:A_cont} and \ref{l:A_mono}. The same is true of the mapping
$$\bS_h \in \mathbb V_{n,h} \mapsto \mathcal A_{n,h}(\bS_h + \bT^{\mathbf{f}}_{n,h}).$$
Therefore, another application of the Browder--Minty theorem gives existence and uniqueness of $\bT_{n,h}^0$ solving \eqref{e:Th0} and we set $\bT_{n,h} = \bT_{n,h}^0 + \bT_{n,h}^{\mathbf f}$. The discrete inf-sup property  \eqref{e:LBBh} then guarantees the existence of a corresponding $\bu_{n,h} \in \mathbb X_{n,h}$ such that $(\bT_{n,h},\bu_{n,h})$ solves the system \eqref{e:discrete_reg}. This is summarized in the following lemma.

\begin{lem}[Existence and uniqueness of the discrete solution]
\label{lem:uniq.discrete}
Assume that $\lambda$ and $\mu$ satisfy the hypotheses \eqref{e:cond_lambda1}--\eqref{e:cond_mu2}.
Then, the system \eqref{e:discrete_reg} has exactly one solution $(\bT_{n,h},\bu_{n,h}) \in \mathbb M_{n,h} \times \mathbb X_{n,h}$.
\end{lem}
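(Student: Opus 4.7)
The plan is to mirror the strategy used for the continuous problem in Section~\ref{sec:aprioriexct}, with the discrete inf-sup property of Lemma~\ref{l:discrete_infsup} playing the role of its continuous counterpart. First I would lift the linear constraint \eqref{e:discrete_reg}$_2$ by exhibiting a particular element of $\mathbb M_{n,h}$ that realizes the right-hand side: the natural choice is $\bT_{n,h}^{\mathbf f} := \Pi_h \bF$, where $\Pi_h$ is the elementwise $L_1$-projection defined in \eqref{e:PIh}. Because $\strain{\bv_h} \in \mathbb M_{n,h}$ for every $\bv_h \in \mathbb X_{n,h}$, the defining property of $\Pi_h$ gives \eqref{e:constraint_h} at once, so $\bT_{n,h}^{\mathbf f}$ reproduces the discrete linear form exactly.

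Setting $\bT_{n,h}^0 := \bT_{n,h} - \bT_{n,h}^{\mathbf f}$, the discrete problem reduces to finding $\bT_{n,h}^0 \in \mathbb V_{n,h}$ satisfying \eqref{e:Th0}. I would then introduce the discrete operator $\mathcal A_{n,h} : \mathbb M_{n,h} \to (\mathbb M_{n,h})'$ associated with $a_n+c(\cdot;\cdot,\cdot)$ and study the shifted map $\bS_h \in \mathbb V_{n,h} \mapsto \mathcal A_{n,h}(\bS_h + \bT_{n,h}^{\mathbf f}) \in (\mathbb V_{n,h})'$. The space $\mathbb V_{n,h}$ is a closed (finite-dimensional, hence reflexive and separable) subspace of $\mathbb M_n$. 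Since the bounds in Lemma~\ref{l:A_cont} and the monotonicity in Lemma~\ref{l:A_mono} are pointwise (a.e.)\ in nature, they transfer verbatim from $\mathcal A_n$ to its restriction $\mathcal A_{n,h}$; the shift by the fixed element $\bT_{n,h}^{\mathbf f}$ preserves monotonicity, and perturbs coercivity only by a linear term in $\bS_h$ which is dominated by the superlinear growth $\|\bS_h\|_{L_{1+1/n}}^{1+1/n}$ from part~(iii) of Lemma~\ref{l:A_cont}, using the uniform bound \eqref{e:bound_Tf_h} on $\bT_{n,h}^{\mathbf f}$. The Browder--Minty theorem then delivers a unique $\bT_{n,h}^0 \in \mathbb V_{n,h}$, and consequently a unique $\bT_{n,h} := \bT_{n,h}^0 + \bT_{n,h}^{\mathbf f} \in \mathbb M_{n,h}$.

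To recover $\bu_{n,h}$ I would mimic the argument around \eqref{e:b-ell-eq}: the linear functional $\ell_{n,h}(\bS_h) := a_n(\bT_{n,h},\bS_h) + c(\bT_{n,h};\bT_{n,h},\bS_h)$ on $\mathbb M_{n,h}$ vanishes on $\mathbb V_{n,h}$ by \eqref{e:Th0}, so $\ell_{n,h} \in \mathbb V_{n,h}^\perp$. The discrete inf-sup inequality \eqref{e:LBBh} is equivalent to the surjectivity of $\strain{\cdot} : \mathbb X_{n,h} \to \mathbb V_{n,h}^\perp$ (with a quantitative bound), which produces $\bu_{n,h} \in \mathbb X_{n,h}$ satisfying \eqref{e:discrete_reg}$_1$. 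The same inequality, together with the inherited Korn inequality on $\mathbb X_{n,h}\subset\mathbb X_n$, yields uniqueness of $\bu_{n,h}$.

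The main obstacle is essentially bookkeeping rather than substantive analysis: one must verify that the boundedness, continuity, coercivity, and strict monotonicity of $\mathcal A_{n,h}$ are genuinely inherited from $\mathcal A_n$ (here the piecewise-constant nature of $\mathbb M_{n,h}$ in fact gives $\mathcal A_{n,h}(\bS_h) = \mathcal A_n(\bS_h)$, trivializing the transfer), and that the shift by $\bT_{n,h}^{\mathbf f}$ does not spoil coercivity. Once these checks are carried out the conclusion follows by a textbook application of the Browder--Minty theorem and the discrete inf-sup property.
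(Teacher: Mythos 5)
Your proposal is correct and follows essentially the same route as the paper: lifting the constraint with $\bT_{n,h}^{\mathbf f}=\Pi_h\bF$, reducing to a problem on $\mathbb V_{n,h}$, transferring the boundedness/continuity/coercivity/monotonicity properties of $\mathcal A_n$ to $\mathcal A_{n,h}$ (noting the coincidence for piecewise constants), invoking Browder--Minty, and then recovering $\bu_{n,h}$ via the discrete inf-sup property. The only small addition is your explicit justification that the shift by $\bT_{n,h}^{\mathbf f}$ leaves coercivity intact because the linear perturbation is dominated by the superlinear growth of the modular; the paper leaves this as ``easy to check.''
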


\subsection{Convergence of the sequence of discrete solutions}
\label{subsec:convg}

Without regularization (i.e., with $\frac{1}{n}$ formally set equal to zero in problem \eqref{e:discrete_reg}, resulting in
the absence of the form $a_n(\cdot,\cdot)$ from the left-hand side of \eqref{e:discrete_reg}$_1$),
the proof of convergence of the sequence of solutions generated by the resulting numerical method to $(\bT,\bu)$ is an open problem. The source of the technical difficulties is that,
as $n \rightarrow \infty$, the only uniform (w.r.t. $n \in \mathbb{N})$ bound on $\bT_{n,h}$, with $h \in (0,1]$ fixed,
that is directly available to us is in the $L_1(\Omega)^{d\times d}$ norm; a uniform bound in the $L_1(\Omega)^{d \times d}$
norm only guarantees biting weak convergence, via Chacon's biting lemma, for example, and this is insufficient to deduce even convergence of a subsequence
in the weak topology of $L_1(\Omega)^{d \times d}$.
The proof of existence of a solution to the continuous problem in
reference~\cite{BMRS14} succeeds because the $L_1(\Omega)^{d\times d}$ norm bound on $\bT_n$ in the sequence of solution pairs $(\bT_n,\bu_n)$
to the regularized problem is supplemented by fractional derivative estimates.
Unfortunately, the extension of those fractional derivative estimates to the finite element discretization considered here is problematic. For this reason, we freeze the parameter $n \in \mathbb{N}$ and
we now discuss convergence, without rates, of the sequence of solution pairs $(\bT_{n,h},\bu_{n,h})$ of the discrete scheme to the
solution $(\bT_n,\bu_n)$ of the regularized problem as $h \rightarrow 0_+$.
Having done so, we will invoke the converge results stated at the end of
Section \ref{sec:aprioriexct} to pass to the limit $n\rightarrow \infty$ to deduce that
$\lim_{n \rightarrow \infty} \lim_{h \rightarrow 0_+} (\bT_{n,h},\bu_{n,h}) = (\bT,\bu)$ in the strong topology of
$L_1(\Omega_0)^{d \times d}_{\rm sym} \times \mathcal{C}(\overline\Omega)^d$, for any $\Omega_0 \subset\subset \Omega$.

We begin by establishing the weak convergence
of the sequence $(\bT_{n,h})_{h \in (0,1]} \subset \mathbb{M}_n$, with $n \in \mathbb{N}$ fixed.

\begin{lem}[Weak convergence of $\bT_{n,h}$]\label{l:weak}
Assume that $\bF\in L_{1+\frac{1}{n}}(\Omega)^{d \times d}_{\rm sym}$ and that the functions $\lambda$ and $\mu$ satisfy the hypotheses \eqref{e:cond_lambda1}--\eqref{e:cond_mu2}. Let
$(\bT_n,\bu_n) \in \mathbb M_n \times \mathbb X_n$ be the unique solution of the regularized problem \eqref{e:weak_reg}.
Then, as $h \rightarrow 0_+$,
$$
\bT_{n,h} \rightharpoonup \bT_n \qquad \textrm{weakly in $\mathbb{M}_n =L_{1+\frac{1}{n}}(\Omega)^{d\times d}_{\rm sym}$}.
$$
\end{lem}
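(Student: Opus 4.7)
The strategy is the standard one for monotone operators: derive uniform a priori bounds, extract a weakly convergent subsequence, pass to the limit in the linear equations, use Minty's trick to identify the nonlinear limit, and finally invoke uniqueness of the continuous solution.

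First, I would establish uniform (in $h$) bounds on $\bT_{n,h}$ in $\mathbb{M}_n=L_{1+\frac{1}{n}}(\Omega)^{d\times d}_{\rm sym}$ and on $\strain{\bu_{n,h}}$ in $L_{n+1}(\Omega)^{d\times d}_{\rm sym}$. The argument mirrors Lemma~\ref{l:apriori} almost verbatim: testing \eqref{e:discrete_reg} with $(\bS_h,\bv_h)=(\bT_{n,h},\bu_{n,h})$ yields the analogue of \eqref{e:T_n_est}, and the bound on $\|\strain{\bu_{n,h}}\|_{L_{n+1}(\Omega)}$ is recovered by using the \emph{discrete} inf-sup property \eqref{e:LBBh} in place of \eqref{e:LBB}, followed by Young's inequality. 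Since $n$ is fixed, these bounds are uniform in $h$.

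Second, since $\mathbb{M}_n$ and $\mathbb{X}_n$ are reflexive for every $n\in\mathbb{N}$, the above bounds together with Korn's inequality \eqref{e:korn} allow extraction of a (not relabeled) subsequence and limits $\widetilde\bT_n \in \mathbb{M}_n$, $\widetilde\bu_n \in \mathbb{X}_n$ such that $\bT_{n,h}\rightharpoonup \widetilde\bT_n$ weakly in $\mathbb{M}_n$ and $\bu_{n,h}\rightharpoonup \widetilde\bu_n$ weakly in $\mathbb{X}_n$. By Lemma~\ref{l:A_cont}(i) the sequence $\mathcal A_n(\bT_{n,h})$ is bounded in $L_{n+1}(\Omega)^{d\times d}_{\rm sym}$, so after a further extraction $\mathcal A_n(\bT_{n,h})\rightharpoonup \chi$ weakly in $L_{n+1}(\Omega)^{d\times d}_{\rm sym}$ for some $\chi$. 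To pass to the limit in the constraint \eqref{e:discrete_reg}$_2$, given $\bv\in\mathbb{X}_n$ I would use the density of smooth functions and nodal interpolation to pick $\bv_h\in\mathbb{X}_{n,h}$ with $\bv_h\to\bv$ strongly in $\mathbb{X}_n$, obtaining $b(\widetilde\bT_n,\bv)=\int_\Omega \bF:\strain{\bv}\,\mathrm{d}\bx$ for all $\bv\in\mathbb{X}_n$. Similarly, testing the first equation with $\bS_h = \Pi_h\bS$ for $\bS\in\mathbb{M}_n$ and using the $L_{1+1/n}$-stability (and convergence) of $\Pi_h$ gives $\int_\Omega \chi:\bS\,\mathrm{d}\bx = \int_\Omega \strain{\widetilde\bu_n}:\bS\,\mathrm{d}\bx$ for all $\bS\in\mathbb{M}_n$, hence $\chi = \strain{\widetilde\bu_n}$ a.e.\ in $\Omega$.

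The main obstacle, as usual, is identifying $\chi = \mathcal A_n(\widetilde\bT_n)$: weak convergence of $\bT_{n,h}$ alone is not enough because $\mathcal A_n$ is nonlinear. I would apply Minty's trick. Testing the discrete first equation with $\bS_h=\bT_{n,h}$ and using \eqref{e:discrete_reg}$_2$ gives
\[
\int_\Omega \mathcal A_n(\bT_{n,h}):\bT_{n,h}\,\mathrm{d}\bx
= \int_\Omega \bF:\strain{\bu_{n,h}}\,\mathrm{d}\bx
\xrightarrow{h\to 0_+} \int_\Omega \bF:\strain{\widetilde\bu_n}\,\mathrm{d}\bx
= \int_\Omega \chi:\widetilde\bT_n\,\mathrm{d}\bx,
\]
where the last equality uses $\chi = \strain{\widetilde\bu_n}$ together with the constraint just established. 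Combining this with the monotonicity inequality (Lemma~\ref{l:A_mono})
\[
0 \le \int_\Omega \bigl(\mathcal A_n(\bT_{n,h}) - \mathcal A_n(\bR)\bigr):(\bT_{n,h}-\bR)\,\mathrm{d}\bx
\]
for arbitrary $\bR\in\mathbb{M}_n$ and passing to the limit yields
\[
0 \le \int_\Omega \bigl(\chi - \mathcal A_n(\bR)\bigr):(\widetilde\bT_n-\bR)\,\mathrm{d}\bx \qquad \forall\,\bR\in\mathbb{M}_n.
\]
A standard hemi-continuity argument (setting $\bR=\widetilde\bT_n-t\bS$ with $t>0$, dividing by $t$, and letting $t\to 0_+$ using the continuity in Lemma~\ref{l:A_cont}(ii)) then forces $\chi = \mathcal A_n(\widetilde\bT_n)$ a.e. in $\Omega$.

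Finally, the relations $\strain{\widetilde\bu_n}=\mathcal A_n(\widetilde\bT_n)$ and $b(\widetilde\bT_n,\bv)=\int_\Omega \bF:\strain{\bv}\,\mathrm{d}\bx$ for all $\bv\in\mathbb{X}_n$ show that $(\widetilde\bT_n,\widetilde\bu_n)$ solves the continuous regularized problem \eqref{e:weak_reg}. By the uniqueness established in Section~\ref{sec:aprioriexct} we obtain $\widetilde\bT_n=\bT_n$, and since the limit is unique, a standard subsequence argument shows that the entire sequence converges, i.e.\ $\bT_{n,h}\rightharpoonup \bT_n$ weakly in $\mathbb{M}_n$.
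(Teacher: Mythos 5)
Your proof is correct, and it is a valid route to the result, but it is organized differently from the paper's. The paper first lifts the constraint via $\bT_{n,h}^{\mathbf f}=\Pi_h\bF$ and works entirely on the constrained subspaces $\mathbb V_{n,h}\subset\mathbb V_n$: it extracts a weak limit only for $\bT_{n,h}^0\in\mathbb V_{n,h}$, establishes the single inequality
$\int_\Omega \mathcal A_n(\bS_{n,h}+\bT_{n,h}^{\mathbf f}):(\bT_{n,h}^0-\bS_{n,h})\,\mathrm{d}\bx\le 0$
for all $\bS_{n,h}\in\mathbb V_{n,h}$ (monotonicity plus the fact that $\mathcal A_n(\bT_{n,h}^0+\bT_{n,h}^{\mathbf f})$ is $L^2$-orthogonal to $\mathbb V_{n,h}$), and then passes to the limit with $\bS_{n,h}=\Pi_h\bS_n$, using that $\mathcal A_n(\Pi_h\bS_n+\bT_{n,h}^{\mathbf f})\to\mathcal A_n(\bS_n+\bT_n^{\mathbf f})$ strongly. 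This never requires identifying weak limits of $\bu_{n,h}$, $\strain{\bu_{n,h}}$, or $\mathcal A_n(\bT_{n,h})$, nor the energy identity. You instead run the classical, unconstrained Minty--Browder scheme: extract weak limits of $\bT_{n,h}$, $\bu_{n,h}$, and $\mathcal A_n(\bT_{n,h})$; pass to the limit in both equations of \eqref{e:discrete_reg} (using $\Pi_h\bS$ and a strongly convergent $\bv_h$) to identify $\chi=\strain{\widetilde\bu_n}$ and the constraint on $\widetilde\bT_n$; and then use the energy identity $\int_\Omega\mathcal A_n(\bT_{n,h}):\bT_{n,h}\,\mathrm{d}\bx=\int_\Omega\bF:\strain{\bu_{n,h}}\,\mathrm{d}\bx$ to control the bilinear term in the monotonicity inequality when passing to the limit, followed by hemicontinuity. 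Both are correct; the paper's version is marginally leaner because lifting the constraint lets one avoid the intermediate extractions and the energy identity, whereas yours is the more standard textbook argument that also directly identifies $\widetilde\bu_n$ in the process (as the paper then does separately when establishing $\overline{\bu_{n,h}}=\bu_n$ later in Lemma~\ref{l:strong}). The only places you should be slightly more explicit are (i) the approximation $\bv_h\to\bv$ in $\mathbb X_n=W_0^{1,n+1}(\Omega)^d$, since nodal interpolation requires continuity (for small $n$ one should first mollify and then interpolate, or use Scott--Zhang); and (ii) a one-line justification that $\lim_h\int_\Omega\mathcal A_n(\bT_{n,h}):\Pi_h\bS\,\mathrm{d}\bx=\int_\Omega\chi:\bS\,\mathrm{d}\bx$ as a weak--strong pairing, which you implicitly use. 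Neither affects the validity of the argument.
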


\begin{proof}
In this proof $C$ denotes a generic positive constant that is independent of $n$ and $h$.
We use again the lift $\bT_{n,h}^{\mathbf f} = \Pi_h \bT^{\mathbf{f}}_n$  of the data satisfying  the equality \eqref{e:constraint_h} and set $\bT_{n,h}^0 = \bT_{n,h} - \bT_{n,h}^{\mathbf f} \in \mathbb V_{n,h}$, which satisfies the equation \eqref{e:Th0}.
The a-priori estimates provided by Lemma~\ref{l:apriori} guarantee that
\begin{align}\label{e:strainbound}
\| \strain{\bu_{n,h}} \|_{L_{n+1}(\Omega)}
&\leq
\frac{d\sqrt{2} }{n} \left[16d^2 \|\bF\|_{L_{1+\frac{1}{n}}(\Omega)}^{1+\frac{1}{n}}
+ 2 (n+1) C_2 \sqrt{2d}\, | \Omega|^{\frac{1}{n+1}} \|\bF\|_{L_{1+\frac{1}{n}}(\Omega)} + 4 (n+1) C_1 \kappa |\Omega|      \right]^{\frac{1}{n+1}} \nonumber\\
& \qquad + C_2 \sqrt{2d}\, | \Omega|^{\frac{1}{n+1}}
\end{align}
and
\begin{align}\label{e:stressbound}
\begin{aligned}
\frac{1}{n+1}\|\bT_{n,h}\|_{L_{1+\frac{1}{n}}(\Omega)}^{1+\frac{1}{n}}+ C_1 \|\bT_{n,h}\|_{L_1(\Omega)}
&\leq \frac{16d^2}{n+1}\|\bF\|^{1+\frac{1}{n}}_{L_{1+\frac{1}{n}}(\Omega)}\\
&\quad + 2C_2 \sqrt{2d}\, | \Omega|^{\frac{1}{n+1}}\|\bF\|_{L_{1+\frac{1}{n}}(\Omega)} + 4C_1 \kappa |\Omega|.
\end{aligned}
\end{align}
Hence, in particular,
$$
\frac{1}{n+1} \| \bT_{n,h} \|^{1+\frac{1}{n}}_{L_{1+\frac{1}{n}}(\Omega)} + \| \bT_{n,h}\|_{L_1(\Omega)}  + \| \strain{\bu_{n,h}} \|_{L_{n+1}(\Omega)}
\leq c_n\qquad\forall\, n \in \mathbb{N},\quad \forall\, h \in (0,1],
$$
where $c_n$ is a positive constant, depending on $\|\bF\|_{L_{1+\frac{1}{n}}(\Omega)}$, $C_1$, $C_2$, $d$, $\kappa$ and $|\Omega|$ only.
Thanks to the stability inequality \eqref{e:bound_Tf_h} satisfied by the lift $\bT_{n,h}^{\mathbf f}$ we then deduce that
$$
\frac{1}{n+1} \| \bT_{n,h}^0 \|_{L_{1+\frac{1}{n}}(\Omega)}^{1+\frac{1}{n}} + \| \bT_{n,h}^0\|_{L_1(\Omega)}  \leq c_n\qquad \forall\,n \in \mathbb{N}, \quad \forall\, h \in (0,1].
$$
Therefore, for each fixed $n \in \mathbb{N}$ there exists a subsequence with respect to $h$ (and still indexed by $h$) and $\overline{\bT}^0_n \in L_{1+\frac{1}{n}}(\Omega)^{d \times d}_{\rm sym}$,
such that, as $h \rightarrow 0_+$,
\begin{equation}\label{e:weak_conv_T0}
\bT_{n,h}^0  \rightharpoonup \overline{\bT}^0_n \qquad \textrm{weakly in $L_{1+\frac{1}{n}}(\Omega)^{d \times d}_{\rm sym}$}.
\end{equation}

We note that $\overline{\bT}^0_n \in \mathbb{V}_n$, in fact. Indeed, for any $\bv_n \in \mathbb{X}_n$ there exists a
sequence $(\bv_{n,h})_{h \in (0,1]}$, with $\bv_{n,h} \in \mathbb{X}_{n,h}$, such that $\varepsilon(\bv_{n,h}) \rightarrow \varepsilon(\bv_n)$ strongly in $L_{n+1}(\Omega)^{d \times d}_{\rm sym}$.
As
$$b(\bT^0_{n,h},\bv_{n,h}) = b(\bT_{n,h},\bv_{n,h})-b(\bT^{\mathbf{f}}_{n,h},\bv_{n,h}) = (\mathbf{f},\bv_{n,h})-(\mathbf{f},\bv_{n,h})=0,$$
passage to the limit $h \rightarrow 0_+$, using the weak convergence \eqref{e:weak_conv_T0} and the strong convergence $\varepsilon(\bv_{n,h}) \rightarrow \varepsilon(\bv_n)$ in $L_{n+1}(\Omega)^{d \times d}_{\rm sym}$
implies that $b(\overline{\bT}^0_n,\bv_n)=0$ for all $\bv_n \in \mathbb{X}_n$. Hence, $\overline{\bT}^0_n \in \mathbb{V}_n$ thanks to the definition of the linear space $\mathbb{V}_n$.

We now show, using Minty's method, that $\overline{\bT}^0_n$ satisfies the equation \eqref{e:T0}. To this end,
we recall the notation \eqref{e:def_A} for $\mathcal A_n$ and first prove that, for $\bS_{n,h} \in \mathbb V_{n,h}$,
\begin{equation}\label{e:sign_A}
\int_\Omega \mathcal A_n(\bS_{n,h} + \bT_{n,h}^{\mathbf f})  : (\bT_{n,h}^0-\bS_{n,h}) \dd \bx \leq 0.
\end{equation}

We begin the proof of the inequality \eqref{e:sign_A} by invoking
the monotonicity result \eqref{e:A_nmonotone}
%properties \eqref{e:mon_mu1} and \eqref{e:mon_lambda1} together with the monotonicity results \eqref{e:mono1} and \eqref{e:mono2},
to deduce that, for $\bS_{n,h} \in \mathbb V_{n,h}$,
\begin{align*}
&\int_\Omega \left( \mathcal A_n(\bT_{n,h}^0 + \bT_{n,h}^{\mathbf f}) - \mathcal A_n(\bS_{n,h} + \bT_{n,h}^{\mathbf f}) \right) : (\bT_{n,h}^0-\bS_{n,h}) \dd \bx\\
& = \int_\Omega \left( \mathcal A_n(\bT_{n,h}^0 + \bT_{n,h}^{\mathbf f}) - \mathcal A_n(\bS_{n,h} + \bT_{n,h}^{\mathbf f}) \right) : (\bT_{n,h}^0+\bT_{n,h}^{\mathbf f} - \bT_{n,h}^{\mathbf f} -\bS_{n,h}) \dd \bx
\geq 0.
\end{align*}
Moreover, as $\bT_{n,h}^0$ and $\bS_{n,h}$ both belong to $\mathbb V_{n,h}$,  we use the relation \eqref{e:Th0} satisfied by $\bT_{n,h}^0$ to deduce that
$$
\int_\Omega \mathcal A_n(\bT_{n,h}^0 + \bT_{n,h}^{\mathbf f}) : (\bT_{n,h}^0  -  \bS_{n,h}) \dd \bx = 0,
$$
and thus we obtain the inequality \eqref{e:sign_A}.

We can now use the inequality \eqref{e:sign_A} to show that $\overline{\bT}^0_n$ solves the problem \eqref{e:T0}.
To see this, we consider $\Pi_h \bS_n$ for a given  $\bS_n \in \mathbb V_n$.
As $h \rightarrow 0_+$, the weak convergence \eqref{e:weak_conv_T0}, the strong convergence
$\Pi_h \bS_n \rightarrow \bS_{n}$ (by density) and $\bT_{n,h}^{\mathbf f}  = \Pi_h \bT^{\mathbf{f}}_n  \rightarrow \bT^{\mathbf{f}}_n$
(see \eqref{e:Tf_strong}) in $\mathbb M_n$ guarantee that
$$
\bT_{n,h}^0 - \Pi_h \bS_n \rightharpoonup \overline{\bT}^0_n-\bS_n \qquad \textrm{weakly in $\mathbb M_n$},
$$
and
$$
\Pi_h \bS_n + \bT_{n,h}^{\mathbf f} \rightarrow \bS_n + \bT^{\mathbf{f}}_n \qquad \textrm{strongly in $\mathbb M_n$}.
$$
Hence, the  inequality \eqref{e:sign_A} and the continuity of $\mathcal A_n$ (cf. Lemma~\ref{l:A_cont} (ii)) lead to
$$
\int_\Omega \mathcal A_n(\bS_n+\bT^{\mathbf{f}}_n):(\overline{\bT}^0_n-\bS_n) \dd \bx \leq 0 \qquad \forall\, \bS_n \in \mathbb V_n.
$$
Choosing $\bS_n = \overline{\bT}^0_n - t \bW_n$ for $t>0$ and some $\bW_n \in \mathbb V_n$, we get
$$
\int_\Omega \mathcal A_n(\overline{\bT}^0_n+\bT^{\mathbf{f}}_n-t\bW_n):\bW_n \dd \bx \leq 0 \qquad \forall\, \bW_n \in \mathbb V_n.
$$
Thanks to the continuity (and therefore hemicontinuity) of $\mathcal A_n$ (cf., again, Lemma~\ref{l:A_cont} (ii)),
we can pass to the limit $t \to 0_+$ to deduce that
$$
\int_\Omega \mathcal A_n(\overline{\bT}^0_n+\bT^{\mathbf{f}}_n):\bW_n \dd \bx \leq 0 \qquad \forall\, \bW_n \in \mathbb V_n,
$$
and consequently, since  $\mathbb V_n$ is a linear space, after replacing $\bW_n$ by $-\bW_n$ in the inequality above and then
combining the two inequalities,
$$
\int_\Omega \mathcal A_n(\overline{\bT}^0_n  +\bT^{\mathbf{f}}_n):\bW_n \dd \bx = 0 \qquad \forall\, \bW_n \in \mathbb V_n,
$$
which shows that $\overline{\bT}^0_n= \bT^0_n=\bT_n - \bT^{\mathbf{f}}_n$ satisfies equation \eqref{e:T0}, and thus $\bT_{n,h} \rightharpoonup \bT_n$ in $\mathbb M_n$
as $h \to 0_+$.
\end{proof}

\begin{lem}[Strong convergence]\label{l:strong}
Assume that $\bF \in L_{1+\frac{1}{n}}(\Omega)^{d \times d}_{\rm sym}$, that the functions $\lambda$ and $\mu$ satisfy the assumptions \eqref{e:cond_lambda1}--\eqref{e:cond_mu2}, and let
$(\bT_n,\bu_n)$ denote the unique solution to the regularized problem \eqref{e:weak_reg},
with $n \in \mathbb{N}$. Then, for each fixed $n \in \mathbb{N}$,
as $h \rightarrow 0_+$,
\begin{alignat}{2}
\bT_{n,h} &\rightarrow \bT_n&& \qquad \textrm{strongly in $L_{p}(\Omega)^{d\times d}_{\rm sym}$ for all $p \in \big[1,1+\textstyle{\frac{1}{n}}\big)$},\label{e:T-Lp-strong}\\
\strain{\bu_{n,h}} &\rightharpoonup \strain{\bu_n}&& \qquad \textrm{weakly in $L_{p}(\Omega)^{d\times d}_{\rm sym}$ for all $p \in [1,n+1]$}. \label{e:u-Lp-weak}
\end{alignat}
When $n=1$, the strong convergence result \eqref{e:T-Lp-strong} holds for all $p \in [1,2]$. In addition, for each $n \in \mathbb{N}$, %
\begin{align}
\bu_{n,h} &\rightarrow \bu_n&& \qquad \left\{\begin{array}{lr} \textrm{strongly in $L_{p}(\Omega)^d$ for all $p \in \big[1,\frac{d(n+1)}{d-(n+1)}\big)$ when $1\leq n \leq d-1$},\\
\textrm{strongly in $C^{0,\alpha}(\overline{\Omega})^d$ for all $\alpha \in \big(0,1-\frac{d}{n+1}\big)$ when $d<n+1$},
\end{array}\right. \label{e:u-Lp-strong}
\end{align}
and for each $n \in \mathbb{N}$, $n\ge 2$,
\begin{equation}
\strain{\bu_{n,h}}  \rightarrow \strain{\bu_n} \qquad \textrm{strongly in $L_{n}(\Omega)^{d\times d}_{\rm sym}$.}
\label{e:u-H1-strong}
\end{equation}

Furthermore, if $\lambda$ satisfies \eqref{e:cond_lambda2bis}, we have that, for any $\Omega_0 \subset \subset \Omega$,
\[ \lim_{n \rightarrow \infty} \lim_{h \rightarrow 0_+} \| \bT_{n,h} - \bT \|_{L_1(\Omega_0)} = 0\qquad \mbox{and}\qquad
\lim_{n \rightarrow \infty} \lim_{h \rightarrow 0_+} \| \bu_{n,h} - \bu \|_{C(\overline\Omega)} = 0,\]
and
$$
\lim_{n \rightarrow \infty} \lim_{h \rightarrow 0_+} \|\strain{\bu_{n,h}} - \strain{\bu} \|_{L_p(\Omega_0)} = 0
\qquad \forall\, \Omega_0\ \subset \subset \Omega, \quad \forall\, p \in [1,\infty).$$
where $(\bT,\bu)$ denotes the unique solution of the original (nonregularized) continuous problem \eqref{e:div}, \eqref{e:strain_stress}
subject to a homogeneous Dirichlet boundary condition on $\partial \Omega$.
\end{lem}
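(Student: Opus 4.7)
My plan is to start from the weak convergence $\bT_{n,h}\rightharpoonup\bT_n$ in $\mathbb M_n$ established in Lemma~\ref{l:weak} and upgrade it to strong convergence by a Minty-type argument based on the quantitative lower bound~\eqref{e:A-monotone}. The central quantity is
\[
X_h := \int_\Omega \bigl(\mathcal A_n(\bT_{n,h}) - \mathcal A_n(\bT_n)\bigr):(\bT_{n,h} - \bT_n) \dd \bx \geq 0.
\]
I would show $X_h\to 0$ by expanding it and using that $\bT_{n,h}^0=\bT_{n,h}-\bT_{n,h}^{\mathbf f}\in\mathbb V_{n,h}$ solves \eqref{e:Th0} and $\bT_n^0=\bT_n-\bT^{\mathbf f}_n\in\mathbb V_n$ solves \eqref{e:T0}, arriving at the decomposition
\[
X_h = \int_\Omega \mathcal A_n(\bT_{n,h}):(\bT_{n,h}^{\mathbf f}-\bT^{\mathbf f}_n)\dd\bx + \int_\Omega \mathcal A_n(\bT_{n,h}):(\bT^{\mathbf f}_n-\bT_n)\dd\bx + \int_\Omega \mathcal A_n(\bT_n):(\bT^{\mathbf f}_n-\bT_{n,h})\dd\bx.
\]
The first term tends to $0$ since $\mathcal A_n(\bT_{n,h})$ is uniformly bounded in $L_{n+1}$ by Lemma~\ref{l:A_cont}(i) while $\bT_{n,h}^{\mathbf f}\to\bT^{\mathbf f}_n$ strongly in $\mathbb M_n$ by~\eqref{e:Tf_strong}. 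For the second term I would insert $\Pi_h(\bT^{\mathbf f}_n-\bT_n)\in\mathbb V_{n,h}$ (the $L_2$-projector preserves $\mathbb V$-membership on the $(\mathcal P^0,\mathcal P^1)$ pair) and eliminate that contribution via the discrete orthogonality $\int_\Omega\mathcal A_n(\bT_{n,h}):\bS_h\dd\bx=0$ for $\bS_h\in\mathbb V_{n,h}$; the remaining defect is controlled by the strong $L_{1+1/n}$-convergence of $\Pi_h$. The third term passes to $\int_\Omega\mathcal A_n(\bT_n):(\bT^{\mathbf f}_n-\bT_n)\dd\bx=0$ by the weak convergence of $\bT_{n,h}$ and the continuous equation \eqref{e:T0}.

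Once $X_h\to 0$, the monotonicity bound~\eqref{e:A-monotone} yields
\[
\int_\Omega \frac{|\bT_{n,h}-\bT_n|^2}{(|\bT_{n,h}|+|\bT_n|)^{1-1/n}}\,\dd\bx \longrightarrow 0,
\]
and a H\"older estimate with conjugate exponents $2/(1+1/n)$ and $2/(1-1/n)$, together with the uniform $L_{1+1/n}$ a priori bound on $\bT_{n,h}$ from Lemma~\ref{l:apriori}, yields strong convergence $\bT_{n,h}\to\bT_n$ in $L_p$ for every $p\in[1,1+\tfrac{1}{n})$; when $n=1$ the denominator on the left degenerates to a constant and the range extends to $[1,2]$. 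The strain convergence then comes for free from the specific structure of the discretization: since $\bT_{n,h}$ is piecewise constant on $\mathcal T_h$, so is $\mathcal A_n(\bT_{n,h})$, and since $\bu_{n,h}\in(\mathcal P^1_h)^d$, $\strain{\bu_{n,h}}$ is also piecewise constant. Hence the first discrete equation in~\eqref{e:discrete_reg} collapses into the pointwise identity $\strain{\bu_{n,h}}=\mathcal A_n(\bT_{n,h})$ in $\Omega$, and the continuity of $\mathcal A_n:L_{1+1/n}\to L_{n+1}$ (Lemma~\ref{l:A_cont}(ii)), combined with the analogous continuous identity $\strain{\bu_n}=\mathcal A_n(\bT_n)$, delivers $\strain{\bu_{n,h}}\to\strain{\bu_n}$ strongly in $L_{n+1}(\Omega)^{d\times d}_{\mathrm{sym}}$. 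This immediately implies both the weak $L_p$-convergence for $p\in[1,n+1]$ and the strong $L_n$-convergence for $n\geq 2$. Korn's inequality~\eqref{e:Korn1} transfers the strain convergence into $\bu_{n,h}\to\bu_n$ strongly in $W^{1,n+1}_0(\Omega)^d$, and Sobolev embedding then produces the strong convergences in $L^p$ (subcritical case $n+1<d$) or in $\mathcal C^{0,\alpha}$ (supercritical case $n+1>d$) in the ranges asserted.

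The iterated limit $n\to\infty$ after $h\to 0_+$ follows by combining the above with the convergence results \eqref{e:strongcon1}--\eqref{e:conv_strain_strong} recalled at the end of Section~\ref{sec:aprioriexct}: fixing $\Omega_0\subset\subset\Omega$ and using the triangle inequality $\|\bT_{n,h}-\bT\|_{L_1(\Omega_0)}\leq\|\bT_{n,h}-\bT_n\|_{L_1(\Omega_0)}+\|\bT_n-\bT\|_{L_1(\Omega_0)}$, the first term tends to zero as $h\to 0_+$ for each fixed $n$, and the second tends to zero as $n\to\infty$. Similar reasoning applies to $\bu_{n,h}$ in $\mathcal C(\overline\Omega)^d$ (via the $\mathcal C^{0,\alpha}$-convergence for $n\geq d$ paired with \eqref{e:strongcon2}) and to $\strain{\bu_{n,h}}$ in $L_p(\Omega_0)^{d\times d}_{\mathrm{sym}}$ for $p\in[1,\infty)$ (via \eqref{e:conv_strain}).

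The main obstacle I anticipate is controlling the middle term in the decomposition of $X_h$: it pairs a sequence that is known only to be bounded in $L_{n+1}$ against an element of $\mathbb V_n\subset\mathbb M_n$ that does not a priori admit a good $L_{n+1}$-approximant in $\mathbb V_{n,h}$. The argument leans essentially on two features specific to the $(\mathcal P^0,\mathcal P^1)$ pair: the preservation of $\mathbb V$-membership by $\Pi_h$, and the fact that $\bT^0_n-\Pi_h\bT^0_n\to 0$ in $L_{1+1/n}$, which is precisely the exponent dual to $L_{n+1}$.
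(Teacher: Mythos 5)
Your proposal is correct, and it takes a genuinely different route from the paper's. You run the Minty argument directly with $\bT_{n,h}-\bT_n$, and your three-term decomposition of $X_h$ is valid: the $\bT_{n,h}^0$ term drops by the discrete orthogonality \eqref{e:Th0}, the $\bT_n^0$ term by the continuous orthogonality \eqref{e:T0}, and the middle term is killed by inserting $\Pi_h\bT_n^0\in\mathbb V_{n,h}$ and using $\|\Pi_h\bT_n^0-\bT_n^0\|_{L_{1+\frac1n}(\Omega)}\to 0$ against the uniform $L_{n+1}$ bound on $\mathcal A_n(\bT_{n,h})$. The paper instead works throughout with the discrepancy $\bT_{n,h}^0-\Pi_h\bT_n^0$. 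The more interesting divergence is in upgrading $X_h\to 0$ to strong convergence: your single H\"older interpolation, pairing the monotonicity defect $\int_\Omega|\bT_{n,h}-\bT_n|^2/(|\bT_{n,h}|+|\bT_n|)^{1-\frac1n}\dd\bx$ against the uniform $L_{1+\frac1n}$ a priori bound, replaces the considerably heavier machinery the paper deploys (Hardy--Littlewood maximal function of $\bT_n$ to control $|\Pi_h\bT_n|$ on the truncation sets $\Omega_k$, Cantor's diagonal argument, and Vitali's theorem). Your estimate is in fact slightly sharper: at $p=1+\frac1n$ the dual exponent $(1-\frac1n)\frac{p}{2-p}$ equals exactly $1+\frac1n$, so you obtain strong convergence in $L_{1+\frac1n}(\Omega)^{d\times d}_{\rm sym}$ itself, not merely on $[1,1+\frac1n)$. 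The third difference is the strain: for $\mathbb M_{n,h}=(\mathcal P^0_h)^{d\times d}_{\rm sym}$ and $\mathbb X_{n,h}=(\mathcal P^1_h)^d\cap\mathbb X_n$, the first equation of \eqref{e:discrete_reg} does collapse to the pointwise identity $\strain{\bu_{n,h}}=\mathcal A_n(\bT_{n,h})$ a.e.\ in $\Omega$, and combined with Lemma~\ref{l:A_cont}(ii) and your $L_{1+\frac1n}$-convergence of $\bT_{n,h}$ this yields $\strain{\bu_{n,h}}\to\strain{\bu_n}$ strongly in $L_{n+1}$, again strictly stronger than the weak-$L_{n+1}$/strong-$L_n$ statement in the lemma; the paper instead passes to the limit in the discrete equation, identifies the strain limit via the inf-sup condition and Korn, and handles \eqref{e:u-H1-strong} by a separate duality argument. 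One caveat worth recording: the pointwise-identity step relies essentially on $\mathbb M_{n,h}$ being piecewise constant with $\strain{\mathbb X_{n,h}}\subset\mathbb M_{n,h}$; for the quadrilateral $(\mathcal Q^r,\mathcal Q^r)$ pairs of Section~\ref{subsec:otherfit}, $\mathcal A_n(\bT_{n,h})\notin\mathbb M_{n,h}$ and it is the paper's less structural argument (together with the $L_\infty$ assumption on $\bT_n$) that survives.
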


\begin{proof}
In this proof, again, $C$ denotes a generic positive constant, independent of $h$ and $n$. Also, we use again the notation
$\bT_n = \bT^0_n + \bT^{\mathbf{f}}_n$ and $\bT_{n,h} = \bT_{n,h}^0 + \bT_{n,h}^{\mathbf f}$, where
$\bT^{\mathbf{f}}_n=\bF$ satisfies the equality \eqref{e:constraint} and $\bT^{\mathbf{f}}_{n,h}= \Pi_h \bT^{\mathbf{f}}_n$ satisfies the equality \eqref{e:constraint_h}.

To establish control on $\bT_{n,h}^0 - \bT^0_n$, we write
\begin{equation}\label{e:decomp}
\bT_{n,h}^0 - \bT^0_n = (\bT_{n,h}^0 - \Pi_h \bT^0_n) + (\Pi_h \bT^0_n - \bT^0_n).
\end{equation}

Since $\Pi_h \bT^0_n  \rightarrow \bT^0_n$ strongly in $L_{1+\frac{1}{n}}(\Omega)^{d\times d}_{\rm sym}$ for all $n \in \mathbb{N}$, it suffices to
focus on the discrepancy $ \bT_{n,h}^0 - \Pi_h \bT_{n}^0$.

Thanks to the inequality \eqref{e:mono1}, for any pair of
matrices $\bS, \bR \in \mathbb{R}^{d \times d}_{\rm sym}$, one has
\begin{align*}
(\bS|\bS|^{\frac{1}{n}-1} - \bR|\bR|^{\frac{1}{n}-1}):(\bS - \bR) &\geq \frac{1}{n}|\bS - \bR|^2 \int_0^1 |\bR + \theta (\bS-\bR)|^{\frac{1}{n}-1}
\dd \theta \\
%& \geq  \frac{1}{n}|\bS - \bR|^2 (|\bR| + |\bS - \bR|)^{\frac{1}{n}-1}\\
& \geq  \frac{1}{n}\frac{|\bS - \bR|^2}{ (|\bR| + |\bS - \bR|)^{1-\frac{1}{n}}}.
\end{align*}
Analogously, for any pair of real numbers $s, r \in \mathbb{R}$,
\begin{align*}
(s|s|^{\frac{1}{n}-1} - r|r|^{\frac{1}{n}-1})\,(s - r)
& \geq  \frac{1}{n}\frac{|s - r|^2}{ (|r| + |s - r|)^{1-\frac{1}{n}}}.
\end{align*}
Hence, and by invoking the inequalities \eqref{e:mon_mu1} and \eqref{e:mon_lambda1} (guaranteed by the assumptions \eqref{e:cond_lambda1}--\eqref{e:cond_mu2}), we have that
\begin{equation}\label{e:strong_conv1bis}
\begin{split}
&\int_\Omega \left(\mathcal A_n (\bT_{n,h}^0 + \bT_{n,h}^{\mathbf f}) - \mathcal A_n(\Pi_h \bT^0_n + \bT_{n,h}^{\mathbf f}) \right):(\bT_{n,h}^0-\Pi_h \bT^0_n) \dd \bx\\
&\qquad \geq
\frac{1}{n^2}\int_\Omega \frac{|\tr{\bT_{n,h}^0 + \bT_{n,h}^{\mathbf f}} - \tr{\Pi_h \bT^0_n + \bT_{n,h}^{\mathbf f}}|^2}
{(|\tr{\Pi_h \bT^0_n + \bT_{n,h}^{\mathbf f}}| + |\tr{\bT_{n,h}^0 + \bT_{n,h}^{\mathbf f}} - \tr{\Pi_h \bT^0_n + \bT_{n,h}^{\mathbf f}}|)^{1-\frac{1}{n}}} \dd \bx \\
&\qquad \quad \,\,+ \frac{1}{n^2}\int_\Omega \frac{|(\bT_{n,h}^0 + \bT_{n,h}^{\mathbf f})^\bd - (\Pi_h \bT^0_n + \bT_{n,h}^{\mathbf f})^\bd|^2}
{(|(\Pi_h \bT^0_n + \bT_{n,h}^{\mathbf f})^\bd| + |(\bT_{n,h}^0 + \bT_{n,h}^{\mathbf f})^\bd - (\Pi_h \bT^0_n + \bT_{n,h}^{\mathbf f})^\bd|)^{1-\frac{1}{n}}}
\dd \bx.
\end{split}
\end{equation}

Moreover, because $\bT^0_n  = \overline{\bT}^0_n \in \mathbb V_n$  (cf. the last sentence in the proof of Lemma \ref{l:weak}), we have
$$
\int_\Omega \strain{\bv_h} : \Pi_h \bT^0_n \dd \bx = \int_\Omega \strain{\bv_h} :  \bT^0_n  \dd \bx = 0 \qquad \forall\, \bv_h \in \mathbb X_{n,h},
$$
and so $\Pi_h \bT^0_n \in \mathbb V_{n,h}$.
As a consequence, $\bT_{n,h}^0 - \Pi_h \bT^0_n \in \mathbb V_{n,h}$ and there holds
$$
\int_\Omega \mathcal A_n (\bT_{n,h}^0 + \bT_{n,h}^{\mathbf f}):(\bT_{n,h}^0-\Pi_h \bT^0_n) \dd \bx = 0.
$$
Using this in the inequality \eqref{e:strong_conv1bis} and noting that $\Pi_h \bT^0_n + \bT_{n,h}^{\mathbf f} = \Pi_h  (\bT^0_n + \bT_{n}^{\mathbf f}) = \Pi_h \bT_n$, we obtain
\begin{equation}\label{e:strong_conv2bis}
\begin{split}
&- \int_\Omega \mathcal A_n(\Pi_h \bT^0_n + \bT_{n,h}^{\mathbf f}) :(\bT_{n,h}^0-\Pi_h \bT^0_n) \dd \bx\\
&\qquad\geq
\frac{1}{n^2}\int_\Omega \frac{|\tr{\bT_{n,h}^0 -\Pi_h \bT^0_n}|^2}
{(|\tr{\Pi_h \bT _n}| + |\tr{\bT_{n,h}^0 -\Pi_h \bT^0_n}|)^{1-\frac{1}{n}}} \dd \bx \\
&\qquad\qquad + \frac{1}{n^2}\int_\Omega \frac{|(\bT_{n,h}^0 -\Pi_h \bT^0_n)^\bd|^2}
{(|(\Pi_h \bT_n)^\bd| + |(\bT_{n,h}^0 - \Pi_h \bT^0_n)^\bd|)^{1-\frac{1}{n}}}
\dd \bx \geq 0.
\end{split}
\end{equation}
On the one hand, $\bT_{n,h}^0 - \Pi_h \bT^0_n$ weakly converges to $\bT^0_n - \bT^0_n = \mathbf 0$ in $\mathbb{M}_n$ as $h \rightarrow 0_+$ (cf. Lemma~\ref{l:weak}). On the other hand, $\Pi_h \bT^0_n + \bT_{n,h}^{\mathbf f}$ strongly converges to $\bT^0_n + \bT^{\mathbf{f}}_n$ in $\mathbb{M}_n
= L_{1+\frac{1}{n}}(\Omega)^{d \times d}_{\rm sym}$ as $h \rightarrow 0_+$. Therefore the continuity of the mapping $\mathcal A_n : L_{1+\frac{1}{n}}(\Omega)^{d \times d}_{\rm sym} \rightarrow L_{n+1}(\Omega)^{d \times d}_{\rm sym}$ (cf. Lemma~\ref{l:A_cont} (ii)), which implies that $\mathcal A_n(\Pi_h \bT^0_n + \bT_{n,h}^{\mathbf f})$ strongly converges to
$\mathcal{A}_n(\bT^0_n + \bT^{\mathbf{f}}_n)$ in $L_{n+1}(\Omega)^{d \times d}_{\rm sym}$ as $h \rightarrow 0_+$,
yields
$$
- \int_\Omega \ \mathcal A_n(\Pi_h \bT^0_{n} + \bT_{n,h}^{\mathbf f}):(\bT_{n,h}^0-\Pi_h \bT^0) \dd \bx\rightarrow 0\qquad \mbox{as $h \rightarrow 0_+$}.
$$
Whence, returning to the inequality \eqref{e:strong_conv2bis},
\begin{align*}
&0 \leq
\frac{1}{n^2}\int_\Omega \frac{|\tr{\bT_{n,h}^0 -\Pi_h \bT^0_n}|^2}
{(|\tr{\Pi_h \bT _n}| + |\tr{\bT_{n,h}^0 -\Pi_h \bT^0_n}|)^{1-\frac{1}{n}}} \dd \bx \\
&\qquad + \frac{1}{n^2}\int_\Omega \frac{|(\bT_{n,h}^0 -\Pi_h \bT^0_n)^\bd|^2}
{(|(\Pi_h \bT_n)^\bd| + |(\bT_{n,h}^0 - \Pi_h \bT^0_n)^\bd|)^{1-\frac{1}{n}}}
\dd \bx \rightarrow 0\qquad \mbox{as $h \rightarrow 0_+$}.
\end{align*}

\noindent
Consequently, for each $n \in \mathbb{N}$,
\begin{align*}
\lim_{h \rightarrow 0_+} \int_\Omega \frac{|\tr{\bT_{n,h}^0 -\Pi_h \bT^0_n}|^2}
{(|\tr{\Pi_h \bT _n}| + |\tr{\bT_{n,h}^0 -\Pi_h \bT^0_n}|)^{1-\frac{1}{n}}} \dd \bx &= 0,\\
\lim_{h \rightarrow 0_+} \int_\Omega \frac{|(\bT_{n,h}^0 -\Pi_h \bT^0_n)^\bd|^2}
{(|(\Pi_h \bT_n)^\bd| + |(\bT_{n,h}^0 - \Pi_h \bT^0_n)^\bd|)^{1-\frac{1}{n}}}
\dd \bx &= 0.
\end{align*}

In the special case when $n=1$, we directly deduce from these, the equality \eqref{e:normT0} and the strong convergence
of $\Pi_h \bT^0_n$ to $\bT^0_n$ in $L_2(\Omega)^{d \times d}_{\rm sym}$,  that $\bT_{n,h}^0 \rightarrow \bT_{n}^0$
strongly in $L_2(\Omega)^{d \times d}_{\rm sym}$, as $h \rightarrow 0_+$. Since  $\bT^{\mathbf{f}}_{n,h}= \Pi_h\bT^{\mathbf{f}}_n\rightarrow \bT^{\mathbf{f}}_n$ strongly in $L_2(\Omega)^{d \times d}_{\rm sym}$ as
$h \rightarrow 0_+$, it follows that, for $n=1$, $\bT_{n,h} \rightarrow \bT_n$ strongly in $L_2(\Omega)^{d \times d}_{\rm sym}$, and therefore also strongly in $L_p(\Omega)^{d \times d}_{\rm sym}$ for all $p\in [1,2]$,
as $h \rightarrow 0_+$. That completes the proof of the assertion of the lemma concerning $(\bT_{n,h})_{h \in (0,1]}$ for $n=1$.

Let us now consider the case when $n>1$.
Let $\mathcal{M}(f)$ denote the Hardy--Littlewood maximal function of $f \in L_1(\Omega)$, with $f$
extended by zero outside $\Omega$ to the whole of $\mathbb{R}^d$, and let $B_r(\bx)$ denote the
$d$-dimensional ball of radius $r$ centred at $\bx \in \mathbb{R}^d$. Clearly,
\[ |\Pi_h \bT_n(\bx)| \leq \frac{1}{|K|} \int_K |\bT_n(\by)| \dd \by
\leq \frac{|B_{h_K}(\bx)|}{|K|}\left(\frac{1}{|B_{h_K}(\bx)|} \int_{B_{h_K}(\bx)}|\bT_n(\by)| \dd \by\right)
\leq c(\eta)\,\mathcal{M}(|\bT_n|)(\bx)\]
for all $\bx \in K$ and all $K \in \mathcal{T}_h$, where $h_K = \mathrm{diam}(K)$ and $c(\eta)$ is a positive constant
that only depends on the shape-regularity parameter $\eta$ of the family $(\mathcal{T}_h)_{h \in (0,1]}$ of simplicial
subdivisions of the domain $\overline\Omega$ (see \eqref{e:reg}). Thus,
\begin{equation}\label{e:maxfbound}
|\Pi_h \bT_n(\bx)|
\leq c(\eta)\,\mathcal{M}(|\bT_n|)(\bx) \qquad \forall\,\bx \in \Omega.
\end{equation}
Since the Hardy--Littlewood maximal function is of weak-type $(L_1,L_{1,\infty})$ (with $L_{1,\infty}$ signifying a Lorentz space) with norm at most $3^d$ (cf. Theorem 2.1.6 and inequality (2.1.3) in \cite{ref:Grafakos}), we have that
\[ |\{ \bx \in \Omega\,:\, \mathcal{M}(|\bT_n|)(\bx) > t\}| \leq \frac{3^d}{t}\, \|\bT_n\|_{L_1(\Omega)} \qquad \forall\, t>0.\]
For $k \in \mathbb{N}$ we define
\[ \Omega_k := \{\bx \in \Omega\, : \, \mathcal{M}(|\bT_n|)(\bx) \leq k\}. \]
Hence,
\begin{align}\label{e:nested}
\Omega_1 \subset \Omega_2 \subset \cdots \subset \Omega\quad \mbox{and}\quad |\Omega \setminus \Omega_k| \leq \frac{3^d}{k} \,
\|\bT_n\|_{L_1(\Omega)}\qquad \forall\, k \in \mathbb{N};
\end{align}
in particular,
\begin{align}
\label{e:nested1} \lim_{k \rightarrow \infty} |\Omega \setminus \Omega_k| = 0.
\end{align}

By recalling \eqref{e:normT0}, \eqref{e:maxfbound} and the definition of the set $\Omega_k$, we have that
\begin{align*}
|\tr{\Pi_h \bT _n}(\bx)| &\leq d^{\frac{1}{2}}|\Pi_h \bT _n(\bx)| \leq d^{\frac{1}{2}}\,c(\eta)\, \mathcal{M}(|\bT _n|)(\bx) \leq d^{\frac{1}{2}}\,c(\eta)\,k
\qquad \forall\, \bx \in \Omega_k,\quad \forall\, k \in \mathbb{N},\\
|(\Pi_h \bT_n)^\bd(\bx)| &\leq |\Pi_h \bT _n(\bx)| \leq c(\eta)\, \mathcal{M}(|\bT _n|)(\bx) \leq c(\eta)\,k\qquad \forall\, \bx \in \Omega_k, \quad \forall\, k \in \mathbb{N}.
\end{align*}
Thus we deduce that, for each $k \in \mathbb{N}$,
%\comment{VG: Although it does not matter, since we are taking the limit, I think it's clearer to specify the factors %multiplying k.}
\begin{align*}
&\lim_{h \rightarrow 0_+} \int_{\Omega_k} \frac{|\tr{\bT_{n,h}^0 -\Pi_h \bT^0_n}|^2}
{(\sqrt{d}c(\eta)\, k + |\tr{\bT_{n,h}^0 -\Pi_h \bT^0_n}|)^{1-\frac{1}{n}}} \dd \bx = 0,\\
&\lim_{h \rightarrow 0_+} \int_{\Omega_k} \frac{|(\bT_{n,h}^0 -\Pi_h \bT^0_n)^\bd|^2}
{(c(\eta)\, k + |(\bT_{n,h}^0 - \Pi_h \bT^0_n)^\bd|)^{1-\frac{1}{n}}}
\dd \bx = 0,
\end{align*}
as $h \rightarrow 0_+$; hence, for each $k$ there exists a null-sequence $(h^{(k)}) \subset (0,1]$, with $(h^{(k+1)}) \subset (h^{(k)})$ for all $k \in \mathbb{N}$,
such that
\[ \frac{|\tr{\bT_{n,{h^{(k)}}}^0 -\Pi_{h^{(k)}} \bT^0_n}|^2}
{(\sqrt{d}c(\eta)\, k + |\tr{\bT_{n,{h^{(k)}}}^0 -\Pi_{h^{(k)}} \bT^0_n}|)^{1-\frac{1}{n}}} \rightarrow 0\quad\mbox{and}\quad
\frac{|(\bT_{n,{h^{(k)}}}^0 -\Pi_{h^{(k)}} \bT^0_n)^\bd|^2}
{(c(\eta)\,k + |(\bT_{n,{h^{(k)}}}^0 - \Pi_{h^{(k)}} \bT^0_n)^\bd|)^{1-\frac{1}{n}}} \rightarrow 0\]
a.e. on $\Omega_k$ as ${h^{(k)}} \rightarrow 0_+$. Since
\begin{align}\label{e:a-ineq}
\frac{a^2}{(k+a)^{1-\frac{1}{n}}} \geq 2^{\frac{1}{n}-1} \min\left(\frac{a^2}{k^{1-\frac{1}{n}}}, a^{1+\frac{1}{n}}\right)\quad
\forall\, a \geq 0,\quad \forall\, k \in \mathbb{N},
\end{align}
it follows that
\[ |\tr{\bT_{n,{h^{(k)}}}^0 -\Pi_{h^{(k)}} \bT^0_n}| \rightarrow 0\quad\mbox{and}\quad
|(\bT_{n,h^{(k)}}^0 -\Pi_{h^{(k)}} \bT^0_n)^{\bd}| \rightarrow 0\qquad \mbox{a.e. on $\Omega_k, \;\forall\, k \in \mathbb{N}$}\]
as $h^{(k)} \rightarrow 0_+$. We then deduce from inequality \eqref{e:normT1} that
\[ \bT_{n,h^{(k)}}^0 -\Pi_{h^{(k)}} \bT^0_n \rightarrow 0\qquad \mbox{a.e. on $\Omega_k$ for all $k \in \mathbb{N}$ as $h^{(k)} \rightarrow 0_+$}.\]
Hence,
\[ \bT_{n,h^{(k)}}^0 \rightarrow \bT^0_n \qquad \mbox{a.e. on $\Omega_k$ for all $k \in \mathbb{N}$ as $h^{(k)} \rightarrow 0_+$}.\]
By Cantor's diagonal argument we can then extract a `diagonal' null-sequence $(h^{(\infty)})$ such that
\[ \bT_{n,h^{(\infty)}}^0 \rightarrow \bT^0_n \qquad \mbox{a.e. on $\Omega_k$ for all $k \in \mathbb{N}$ as $h^{(\infty)} \rightarrow 0_+$}.\]
Since the sets $\Omega_k$ are nested (cf. \eqref{e:nested}) and they exhaust the whole of $\Omega$ (cf. \eqref{e:nested1}),
it follows that
\[ \bT_{n,h^{(\infty)}}^0 \rightarrow \bT^0_n \qquad \mbox{a.e. on $\Omega$ as $h^{(\infty)} \rightarrow 0_+$}.\]
For the sake of simplicity of our notation we shall henceforth suppress the superscript $^{(\infty)}$ and will simply write
\[ \bT_{n,h}^0  \rightarrow  \bT^0_n\qquad \mbox{a.e. on $\Omega$ as $h \rightarrow 0_+$}.\]
As $\bT^{\mathbf{f}}_{n.h}=\Pi_h \bT^{\mathbf{f}}_n  \rightarrow \bT^{\mathbf{f}}_n$ strongly in $L_{1+\frac{1}{n}}(\Omega)^{n \times n}_{\rm sym}$, and therefore (for a subsequence, not indicated) a.e. in $\Omega$, it follows that
\[ \bT_{n,h}=\bT_{n,h}^0 + \bT^{\mathbf{f}}_{n.h} \rightarrow \bT^0_n+  \bT^{\mathbf{f}}_n = \bT_n\qquad \mbox{a.e. on $\Omega$ as $h \rightarrow 0_+$.}\]
As, by Lemma \ref{l:weak}, $\bT_{n,h} \rightharpoonup \bT_n$ weakly in $L_{1+\frac{1}{n}}(\Omega)^{d \times d}_{\rm sym}$, it
follows that the sequence $(\bT_{n,h})_{h \in (0,1]}$ is equiintegrable in $L_{1}(\Omega)^{d \times d}_{\rm sym}$,
and therefore by Vitali's theorem
\begin{equation}\label{e:T-L1-conv}
\bT_{n,h} \rightarrow \bT_n\qquad \mbox{strongly in $L_{1}(\Omega)^{d \times d}_{\rm sym}$ as $h \rightarrow 0_+$},
\end{equation}
whereby, because of the weak convergence $\bT_{n,h} \rightharpoonup \bT_n$ in $L_{1+\frac{1}{n}}(\Omega)^{d \times d}_{\rm sym}$, it follows that
\[\bT_{n,h} \rightarrow \bT_n\qquad \mbox{strongly in $L_{p}(\Omega)^{d \times d}_{\rm sym}$
for all $p \in [1,1+\frac{1}{n})$, as $h \rightarrow 0_+$,}\]
where the limiting function $\bT_n$ is the first component of the \textit{unique} solution $(\bT_n,\bu_n)$ of the regularized problem. That completes the proof of the strong convergence result \eqref{e:T-Lp-strong} for $n>1$. For $n=1$, \eqref{e:T-Lp-strong} was already shown in the first part of this proof for all $p \in [1,2]$; hence \eqref{e:T-Lp-strong} holds for all $n \in \mathbb{N}$.

To prove the strong convergence of the sequence $(\bu_{n,h})_{h \in (0,1]} \subset \mathbb X_{n,h}$ to $\bu_n \in \mathbb{X}_n$,
we note that the inequality \eqref{e:strainbound} implies that the sequence
$(\strain{\bu_{n,h}})_{h \in (0,1]}$ is bounded in $L_{n+1}(\Omega)^{d \times d}_{\rm sym}$. Hence there exists a
subsequence, not indicated, and $\overline{\strain{\bu_{n,h}}} \in L_{n+1}(\Omega)^{d \times d}_{\rm sym}$ such that, as $h \rightarrow 0_+$,
%\comment{VG: why index the limit function by h? In (5.14), the limit function has no such index.}
%
\begin{align}\label{e:weak-conv-1}
\strain{\bu_{n,h}} \rightharpoonup \overline{\strain{\bu_{n,h}}}\qquad \mbox{weakly in $L_{n+1}(\Omega)^{d \times d}_{\rm sym}$}.
\end{align}
Here, and henceforth, for any weakly (respectively, strongly) convergent sequence of the form $(a_{n,h})_{h \in (0,1]}$
in a function space, with $n \geq 1$ held fixed, $\overline{a_{n,h}}$ will denote the weak (respectively, strong) limit
of the sequence as $h \rightarrow 0_+$, in instances where the limit of the sequence is yet to be identified.

This will imply the assertion \eqref{e:u-Lp-weak} once we have shown that $\overline{\strain{\bu_{n,h}}}= \strain{\bu_{n}}$, which we shall do now.
For $1 \leq n \leq d-1$, Korn's inequality \eqref{e:Korn1} and Poincar\'e's inequality \eqref{e:poincare} together
imply that $(\bu_{n,h})_{h \in (0,1]}$ is bounded in $W_0^{1,n+1}(\Omega)^{d }$, and by Kondrashov's
compact embedding theorem the sequence therefore possesses
a strongly convergent subsequence (not indicated), with limit $\overline{\bu_{n,h}} \in L_p(\Omega)^d$, such that
\[ \bu_{n,h} \rightarrow \overline{\bu_{n,h}}\qquad \mbox{strongly in $L_p(\Omega)^d$ as $h \rightarrow 0_+$ for all $p \in \big[1,\frac{(n+1)d}{d-(n+1)}\big)$}.
\]
This will imply the first line of the assertion \eqref{e:u-Lp-strong} once we have shown that $\overline{\bu_{n,h}}= \bu_{n}$, which we shall do below. In any case, by the uniqueness of the weak limit it then follows that $\overline{\strain{\bu_{n,h}}} = \strain{\overline{\bu_{n,h}}}$,
and therefore
\[  \strain{\bu_{n,h}} \rightharpoonup \strain{\overline{\bu_{n,h}}}\qquad \mbox{weakly in $L_{n+1}(\Omega)^{d \times d}_{\rm sym}$ as $h \rightarrow 0_+$.}\]

For $n>d-1$, by an analogous argument,
\[ \bu_{n,h} \rightarrow \overline{\bu_{n,h}}\qquad \mbox{strongly in $C^{0,\alpha}(\overline\Omega)^d$
as $h \rightarrow 0_+$ for all
$\alpha \in (0,1 - \frac{d}{n+1})$.}
\]
This will imply the second line of the assertion \eqref{e:u-Lp-strong} provided we show  that $\overline{\bu_{n,h}}= \bu_{n}$, the second component of the unique solution $(\bT_n,\bu_n)$
of the regularized problem. We shall do so by passing to the limit in equation \eqref{e:discrete_reg}$_1$.

To this end, take any $\bS \in L_{1+\frac{1}{n}}(\Omega)^{d \times d}_{\rm sym}$ and let $\bS_h:=\Pi_h \bS$ in equation
\eqref{e:discrete_reg}$_1$, resulting in
\begin{align}\label{e:discrete_reg1}
%\begin{split}
&a_n(\bT_{n,h},\Pi_h \bS)+c(\bT_{n,h};\bT_{n,h},\Pi_h \bS)-b(\Pi_h \bS,\bu_{n,h}) =0.
\end{align}
As
\begin{align}\label{e:proj-strong}
\Pi_h \bS \rightarrow \bS\qquad \mbox{strongly in $L_{1+\frac{1}{n}}(\Omega)^{d \times d}_{\rm sym}$}
\end{align}
it follows from the weak convergence \eqref{e:weak-conv-1} that, for each $n \in \mathbb{N}$,
\begin{align}\label{e:b-conv}
\lim_{h \rightarrow 0_+} b(\Pi_h \bS,\bu_{n,h}) = b(\bS,\overline{\bu_{n,h}}) \qquad \forall\, \bS \in L_{1+\frac{1}{n}}(\Omega)^{d \times d}_{\rm sym}.
\end{align}
We shall now pass to the limit $h \rightarrow 0_+$ in the first two terms on the left-hand side of the equation
\eqref{e:discrete_reg1}.

Thanks to the strong convergence result $\bT_{n,h} \rightarrow \bT_n$ in $L_1(\Omega)^{d \times d}_{\rm sym}$,
which follows from the assertion \eqref{e:T-Lp-strong} for all $n \in \mathbb{N}$, as $h \rightarrow 0_+$, an identical argument to the one in the proof of Lemma \ref{l:A_cont} (ii) implies that, as $h \rightarrow 0_+$,
$$
\lambda(\tr{\bT_{n,h} }) \tr{\bT_{n,h}} \bI \rightarrow \lambda(\tr{\bT_{n}}) \tr{\bT_{n}} \bI \quad \textrm{strongly in }
L_{n+1}(\Omega)^{d \times d}_\textrm{sym}$$
and
$$
\mu(| \bT^{\bd}_{n,h}|) \bT^{\bd}_{n,h}  \rightarrow \mu(| \Bk \bT^{\bd}_{n} |) \bT^{\bd}_{n} \quad \textrm{strongly in }
L_{n+1}(\Omega)^{d \times d}_\textrm{sym}.$$
Together with the strong convergence \eqref{e:proj-strong} these then imply that, for each $n \in \mathbb{N}$,
\begin{align}\label{e:c-conv} \lim_{h \rightarrow 0_+} c(\bT_{n,h};\bT_{n,h},\Pi_h \bS) = c(\bT_{n};\bT_{n}, \bS)\qquad
\forall\, \bS \in L_{1+\frac{1}{n}}(\Omega)^{d \times d}_{\rm sym}.
\end{align}

Finally, we consider the first term on the left-hand side of the equation \eqref{e:discrete_reg1}. By the inequality \eqref{e:holder},
\begin{align*}
 \bigg|\frac{\tr{\bT_{n,h} }}{|\tr{\bT_{n,h} }|^{1-\frac{1}{n}}}  - \frac{\tr{\bT_{n} }}{|\tr{\bT_{n}}|^{1-\frac{1}{n}}} \bigg|
\leq 2^{1-\frac{1}{n}}d^{\frac{1}{2n}}\,|\bT_{n,h}  - \bT_{n} |^{\frac{1}{n}}.
\end{align*}
Thus, because of the strong convergence \eqref{e:T-Lp-strong}, we have that, as $h \rightarrow 0_+$,
\[ \frac{\tr{\bT_{n,h} }}{|\tr{\bT_{n,h} }|^{1-\frac{1}{n}}} \bI \rightarrow \frac{\tr{\bT_{n} }}{|\tr{\bT_{n}}|^{1-\frac{1}{n}}}\bI
\qquad \mbox{strongly in $L_p(\Omega)^{d \times d}_{\rm sym}$ for all $p \in [1,n+1)$}.\]
Furthermore, by the uniform bound \eqref{e:stressbound}, for each fixed $n \in \mathbb{N}$,
\[ \left(\frac{\tr{\bT_{n,h} }}{|\tr{\bT_{n,h} }|^{1-\frac{1}{n}}} \bI\right)_{h \in (0,1]}\]
is a bounded sequence in $L_{n+1}(\Omega)^{d \times d}_{\rm sym}$, which therefore has a weakly convergent subsequence (not indicated),
whose (weak) limit in $L_{n+1}(\Omega)^{d \times d}_{\rm sym}$, by the uniqueness of the weak limit, coincides with
\[ \frac{\tr{\bT_{n} }}{|\tr{\bT_{n} }|^{1-\frac{1}{n}}} \bI.\]
Hence, as $ h \rightarrow 0_+$,
\[ \frac{\tr{\bT_{n,h} }}{|\tr{\bT_{n,h} }|^{1-\frac{1}{n}}} \bI \rightharpoonup \frac{\tr{\bT_{n} }}{|\tr{\bT_{n}}|^{1-\frac{1}{n}}}\bI
\qquad \mbox{weakly in $L_{n+1}(\Omega)^{d \times d}_{\rm sym}$}.\]
By an identical argument,
\[ \frac{{\bT^\bd_{n,h} }}{|{\bT^\bd_{n,h} }|^{1-\frac{1}{n}}} \rightharpoonup \frac{{\bT^\bd_{n} }}{|{\bT^\bd_{n}}|^{1-\frac{1}{n}}}
\qquad \mbox{weakly in $L_{n+1}(\Omega)^{d \times d}_{\rm sym}$}.\]
By combining these two weak convergence results with the strong convergence result \eqref{e:proj-strong} we deduce that, for each $n \in \mathbb{N}$,
\begin{align}\label{e:a-conv} \lim_{h \rightarrow 0_+} a_n(\bT_{n,h},\Pi_h \bS) = a_n(\bT_{n}, \bS)\qquad
\forall\, \bS \in L_{1+\frac{1}{n}}(\Omega)^{d \times d}_{\rm sym}.
\end{align}

Using the convergence results \eqref{e:b-conv}, \eqref{e:c-conv} and \eqref{e:a-conv} we can now pass to the limit $h \rightarrow 0_+$ in equation \eqref{e:discrete_reg1} to deduce that
\begin{align}\label{e:discrete_reg2}
%\begin{split}
&a_n(\bT_{n},\bS)+c(\bT_{n};\bT_{n}, \bS)-b(\bS,\overline{\bu_{n,h}}) =0 \qquad \forall\, \bS \in L_{1+\frac{1}{n}}(\Omega)^{d \times d}_{\rm sym}.
\end{align}
By subtracting equation \eqref{e:discrete_reg2} from equation \eqref{e:weak_reg} we deduce that
\[ b(\bS,\overline{\bu_{n,h}}-\bu_n) = 0 \qquad \forall\, \bS \in L_{1+\frac{1}{n}}(\Omega)^{d \times d}_{\rm sym}.\]
Hence,
\[ \strain{\overline{\bu_{n,h}}-\bu_n} = \mathbf{0}\qquad \mbox{in $L_{n+1}(\Omega)^{d \times d}_{\rm sym}$}.\]
Thus, by noting the inequality \eqref{e:korn} we deduce that
\[ \overline{\bu_{n,h}}-\bu_n = {\bf 0}  \qquad \mbox{in $W_0^{1,n+1}(\Omega)^{d}$}.\]
In other words, $\overline{\bu_{n,h}}=\bu_n \in W_0^{1,n+1}(\Omega)^{d}$, as has been asserted above.

The strong convergence \eqref{e:u-H1-strong} in $L_n(\Omega)^d$ for $n\ge 2$ follows by an
argument which we have already used, so we only sketch the proof.
For any $\bS_h \in \mathbb M_{n,h}$, the constitutive relations in \eqref{e:weak_reg} and \eqref{e:discrete_reg} imply
$$
\int_\Omega (\strain{\bu_{n,h}} - \strain{\bu_n}):  \bS_h  \dd \bx  = \int_\Omega \mathcal (\mathcal A_n(\bT_{n,h}) - \mathcal A_n(\bT_n)):\bS_h  \dd \bx  \leq \| \mathcal A_n(\bT_{n,h}) - \mathcal A_n(\bT_n))\|_{L_{n}(\Omega)} \| \bS_h \|_{L_{\frac n {n-1}}(\Omega)}.
$$
Now, using an argument similar to the one leading to \eqref{e:holder}, we find that
\begin{equation}\label{e:reg_conv_0}
\begin{split}
\frac 1 n \int_{\Omega}\bigg|\frac{\tr{\bT_{n,h}}}{|\tr{\bT_{n,h}}|^{1-\frac{1}{n}}}  - \frac{\tr{\bT_n}}{|\tr{\bT_n}|^{1-\frac{1}{n}}} \bigg|^n  \dd \bx
&+ \frac 1 n \int_{\Omega}\bigg|\frac{\bT_{n,h}^{\mathbf d}}{|\bT_{n,h}^{\mathbf d}|^{1-\frac{1}{n}}}  - \frac{\bT_n^{\mathbf d}}{|\bT_n^{\mathbf d}|^{1-\frac{1}{n}}} \bigg|^n  \dd \bx  \\
& \leq C \| \bT_{n,h} - \bT_n \|_{L_1(\Omega)} \rightarrow 0 \qquad \textrm{as }h\to 0_+,
\end{split}
\end{equation}
for a constant $C$ only depending on $d$ and $n$.
For the monotone part, $\mathcal A$, in $\mathcal A_n$ (cf. \eqref{e:A_no_n}), we invoke a
similar argument to the one used in Lemma~\ref{l:A_cont} to deduce that
$$
 \| \mathcal A(\bT_{n,h}) - \mathcal A(\bT_n)\|_{L_{n}(\Omega)} \rightarrow 0 \qquad \textrm{as }h \to 0_+.
$$
Hence, in conjunction with \eqref{e:reg_conv_0}, we arrive at
$$
\frac{1}{\| \bS_h \|_{L_{\frac n {n-1}}(\Omega)}}\int_\Omega (\strain{\bu_{n,h}} - \strain{\bu_n}):
\bS_h  \dd \bx  \rightarrow 0 \qquad \textrm{as }h\to 0_+.
$$
Using the decomposition $\strain{\bu_{n,h}} - \strain{\bu_n} = \strain{\bu_{n,h}} - \Pi_h(\strain{\bu_n}) + \Pi_h(\strain{\bu_n}) - \strain{\bu_n}$, we write
\begin{align*}
&\frac{1}{\| \bS_h \|_{L_{\frac n {n-1}}(\Omega)}} \int_\Omega (  \strain{\bu_{n,h}} - \Pi_h(\strain{\bu_n})) : \bS_h   \dd \bx 
\\
&\qquad = \frac{1}{\| \bS_h \|_{L_{\frac n {n-1}}(\Omega)}} \int_\Omega (  \strain{\bu_{n,h}} - \strain{\bu_n}) : \bS_h  \dd \bx
- \frac{1}{\| \bS_h \|_{L_{\frac n {n-1}}(\Omega)}} \int_\Omega (  \Pi_h (\strain{\bu_{n}}) -\strain{\bu_n}) : \bS_h  \dd \bx .
\end{align*}
Choosing $\bS_h = (  \strain{\bu_{n,h}} - \Pi_h(\strain{\bu_n}))  |  \strain{\bu_{n,h}} - \Pi_h(\strain{\bu_n})  |^{n-2} \in \mathbb M_{n,h}$ yields
$$
\|  \strain{\bu_{n,h}} - \Pi_h(\strain{\bu_n})) \|_{L_n(\Omega)} \rightarrow 0 \quad \textrm{as }h\to 0_+.
$$
It remains to employ a density argument to deduce the strong convergence result \eqref{e:u-H1-strong}.

The final claim in the statement of the lemma follows from the strong convergence results
\eqref{e:strongcon1}, \eqref{e:strongcon2}$_2$, \eqref{e:conv_strain}, \eqref{e:T-Lp-strong},
and \eqref{e:u-Lp-strong}$_2$, which together imply that, for any $\Omega_0 \subset \subset \Omega$,
\[ \lim_{n \rightarrow \infty} \lim_{h \rightarrow 0_+} \| \bT_{n,h} - \bT \|_{L_1(\Omega_0)} = 0,\]
\[ \lim_{n \rightarrow \infty} \lim_{h \rightarrow 0_+} \| \bu_{n,h} - \bu \|_{C(\overline\Omega)} = 0,\]
and
$$
\lim_{n \rightarrow \infty} \lim_{h \rightarrow 0_+} \| \strain{\bu_{n,h} - \bu}\|_{L_p(\Omega_0)} =0
\qquad \forall\, p \in [1,\infty).
$$

%Indeed, regarding the first limit, \eqref{e:strongcon1} holds on any proper subset $\Omega_0$ of $\Omega$, but since %$\Omega_0$ is arbitrary, \eqref{e:strongcon1} implies that  $\bT_{n}$ tends to $\bT$ almost everywhere in $\Omega$. As $\bT$ %is in $L_1(\Omega)^{d\times d}$, then $\bT_{n}$ tends to $\bT$ in $L_1(\Omega)^{d\times d}$.

%\label{e:conv_strain}
The assertions concerning the uniqueness of $\bu$ and $\bT$ follow from Theorem \ref{t:existence} (c).
\end{proof}

The hypotheses \eqref{e:cond_lambda2bis} and \eqref{e:cond_mu2} adopted in the statement
of Lemma \ref{l:strong} guarantee that the derivatives of the functions $s \in \mathbb{R}
\mapsto \lambda(s) s$ and $s \in \mathbb{R}_{\geq 0} \mapsto \mu(s) s$ are bounded below by $0$
on $\mathbb{R}$ and $\mathbb{R}_{>0}$, respectively.
These two functions are, in fact, Lipschitz-continuous on any compact subinterval of $\mathbb{R}$
and $\mathbb{R}_{\geq 0}$, respectively. If they are assumed to be globally H\"older-continuous on $\mathbb{R}$
and $\mathbb{R}_{\geq 0}$, respectively, with H\"older exponent $\beta \in (0,1]$, then an error inequality holds, for all $n \in \mathbb{N}$, in the limit of $h \rightarrow 0_+$, as we shall now show.

\begin{thm}
\label{thm:err.inequ}
In addition to the assumptions of Lemma \ref{l:strong}, let us also suppose that the functions $s \in \mathbb{R} \mapsto \lambda(s)s \in \mathbb{R}$
and $\bS \in \mathbb{R}^{d \times d}_{\rm sym} \mapsto \mu(|\bS|)\bS \in \mathbb{R}^{d \times d}_{\rm sym}$ are H\"older-continuous with exponent $\beta \in (0,1]$,
i.e.,  there exists a positive constant $\Lambda$ such that
\begin{equation}
\label{e:lipsch}
|\lambda(r)r-\lambda(s)s| \leq \Lambda |r-s|^\beta \quad\forall\, r,s \in \mathbb{R}, \quad |\mu(|\bR|)\bR-\mu(|\bS|)\bS| \leq \Lambda |\bR-\bS|^\beta\quad \forall\, \bR,\bS \in \mathbb{R}^{d \times d}_{\rm sym}.
\end{equation}
Then, assuming that $\bT_n \in L_\infty(\Omega)^{d \times d}_{\rm sym}$ for $n \geq 2$, the following error bound holds:
\begin{alignat}{2}
\label{e:e.inequ}
\begin{aligned}
& \int_\Omega \Phi_n(|\bT_{n,h} - \Pi_h \bT_n|) \dd \bx \\
&  \qquad \leq C(d,\Lambda,\beta,n,{\rm K}_n,{\rm K})\left(\inf_{\bv_h \in \mathbb{X}_{n,h}}\int_\Omega \Phi_n^*(|\strain{\bv_h-\bu_n}|)\dd \bx + \int_\Omega \Phi_n^*(|\bT_n - \Pi_h \bT_n|^{\min(\beta,\frac{1}{n})})\dd \bx\right).
\end{aligned}
\end{alignat}
Here,
\[ \Phi_n(s):= \frac{s^2}{(1+s)^{1-\frac{1}{n}}},\qquad s \in [0,\infty),\quad n \in \mathbb{N},\]
$\Phi_n^*$, defined by $\Phi_n^*(s):= \sup_{t \geq 0} (st - \Phi_n(t))$ for $s \in [0,\infty)$, is the convex conjugate of the function $\Phi_n$, ${\rm K}_n:=\max(1,\|\bT_n\|_{L_\infty(\Omega)})$, and ${\rm K}={\rm K}(n)$ is a positive constant that will be specified in the proof.
When $n=1$ the inequality \eqref{e:e.inequ} holds without the additional assumption that $\bT_n \in L_\infty(\Omega)^{d \times d}_{\rm sym}$.
\end{thm}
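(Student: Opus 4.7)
The strategy is a Cea--type consistency argument carried out in the Orlicz framework of the N-function pair $(\Phi_n,\Phi_n^*)$. I would first derive a Galerkin orthogonality identity, then bound the left-hand side from below by the monotonicity estimate already established in \eqref{e:strong_conv1bis}, and conclude with a Fenchel--Young inequality tailored to $\Phi_n$.

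Subtracting the discrete equation \eqref{e:discrete_reg}$_1$ from the continuous one \eqref{e:weak_reg}$_1$ and testing with $\bS_h=\bT_{n,h}-\Pi_h\bT_n\in\mathbb M_{n,h}$ gives
\begin{equation*}
\int_\Omega (\mathcal A_n(\bT_{n,h})-\mathcal A_n(\bT_n)):(\bT_{n,h}-\Pi_h\bT_n)\dd\bx = b(\bT_{n,h}-\Pi_h\bT_n,\bu_{n,h}-\bu_n).
\end{equation*}
The crucial observation is that on simplicial meshes $\strain{\bv_h}$ is piecewise constant for every $\bv_h\in\mathbb X_{n,h}$, so $b(\bT_n-\Pi_h\bT_n,\bv_h)=0$; combined with the discrete consistency $b(\bT_{n,h}-\bT_n,\bv_h)=0$ inherited from the two constraint equations \eqref{e:weak_reg}$_2$ and \eqref{e:discrete_reg}$_2$, this yields $b(\bT_{n,h}-\Pi_h\bT_n,\bv_h)=0$ for all $\bv_h\in\mathbb X_{n,h}$. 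Hence $\bu_{n,h}-\bu_n$ may be replaced by $\bv_h-\bu_n$ with arbitrary $\bv_h\in\mathbb X_{n,h}$, and a rearrangement leads to
\begin{equation*}
\int_\Omega (\mathcal A_n(\bT_{n,h})-\mathcal A_n(\Pi_h\bT_n)):(\bT_{n,h}-\Pi_h\bT_n)\dd\bx = b(\bT_{n,h}-\Pi_h\bT_n,\bv_h-\bu_n) + \int_\Omega (\mathcal A_n(\bT_n)-\mathcal A_n(\Pi_h\bT_n)):(\bT_{n,h}-\Pi_h\bT_n)\dd\bx.
\end{equation*}

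For the left-hand side I would apply the monotonicity inequality \eqref{e:strong_conv1bis}, which yields (after combining the trace and deviatoric contributions) a quantity of the form $cn^{-2}\int_\Omega |\bT_{n,h}-\Pi_h\bT_n|^2/(|\Pi_h\bT_n|+|\bT_{n,h}-\Pi_h\bT_n|)^{1-1/n}\dd\bx$. For $n\ge 2$ with $\bT_n\in L_\infty$, the pointwise stability of the local averaging operator gives $|\Pi_h\bT_n|\le\|\bT_n\|_{L_\infty}\le {\rm K}_n$, and the elementary estimate ${\rm K}_n+s\le {\rm K}_n(1+s)$ (valid because ${\rm K}_n\ge 1$) shows that the integrand dominates ${\rm K}_n^{1/n-1}\Phi_n(|\bT_{n,h}-\Pi_h\bT_n|)$. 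For $n=1$, the regularization terms of $\mathcal A_1$ are linear, $\Phi_1(s)=s^2$, and the monotonicity provides an unweighted $L_2$-lower bound without any $L_\infty$ assumption on $\bT_n$. The reciprocal of the resulting prefactor is what is absorbed into the constant ${\rm K}={\rm K}(n)$ in the statement.

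For the right-hand side, the first term is bounded by $\int_\Omega |\bT_{n,h}-\Pi_h\bT_n|\,|\strain{\bv_h-\bu_n}|\dd\bx$. For the second, the H\"older hypothesis \eqref{e:lipsch} handles the monotone part of $\mathcal A_n$ with exponent $\beta$, while \eqref{e:holder} handles the two regularization terms with exponent $1/n$; together they give the pointwise bound $|\mathcal A_n(\bT_n)-\mathcal A_n(\Pi_h\bT_n)|\le C(n,d,\Lambda)\,|\bT_n-\Pi_h\bT_n|^{\min(\beta,1/n)}$. An Orlicz Young inequality of the form $ab\le\epsilon\,\Phi_n(a)+C_\epsilon\,\Phi_n^*(b)$ --- valid because $\Phi_n$ is convex with $\Phi_n(\lambda s)\le\lambda\Phi_n(s)$ for $\lambda\in(0,1]$ and both $\Phi_n,\Phi_n^*$ satisfy a $\Delta_2$ condition --- applied to both right-hand side contributions, with $\epsilon$ small enough to absorb the resulting $\Phi_n(|\bT_{n,h}-\Pi_h\bT_n|)$ terms into the left-hand side, delivers \eqref{e:e.inequ} after taking the infimum over $\bv_h\in\mathbb X_{n,h}$.

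The main technical obstacle is the book-keeping in the Orlicz Young step: since $\Phi_n$ is not homogeneous, the familiar $ab\le\epsilon a^p+C_\epsilon b^q$ scaling is unavailable, and one must exploit the $\Delta_2$ condition for $\Phi_n^*$ both to absorb the H\"older constant $C(n,d,\Lambda)$ inside $\Phi_n^*(\cdot)$ and to land on the clean form $\Phi_n^*(|\bT_n-\Pi_h\bT_n|^{\min(\beta,1/n)})$ in the statement. The degeneration of the prefactor ${\rm K}_n^{1/n-1}$ as $\|\bT_n\|_{L_\infty}$ grows is precisely the reason \eqref{e:e.inequ} does not pass to the limit $n\to\infty$ uniformly.
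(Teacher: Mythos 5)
Your proposal is correct and follows essentially the same approach as the paper's proof: the same Galerkin identity tested with $\bS_h=\bT_{n,h}-\Pi_h\bT_n\in\mathbb V_{n,h}$, the same monotonicity lower bound modulated by the $L_\infty$ control on $\Pi_h\bT_n$, the same H\"older bound on $\mathcal A_n(\bT_n)-\mathcal A_n(\Pi_h\bT_n)$ producing the $\min(\beta,\tfrac1n)$ exponent, and the same Fenchel--Young step with the $\Delta_2$ property of $\Phi_n^*$ to absorb the H\"older constant inside $\Phi_n^*(\cdot)$. The only cosmetic difference is that the paper collects everything into a single auxiliary tensor $\bU_{n,h}:=\strain{\bv_h-\bu_n}+(\mathcal A_n(\bT_n)-\mathcal A_n(\Pi_h\bT_n))$ before splitting by trace/deviatoric parts, whereas you keep the two right-hand side contributions separate; the estimates are otherwise identical.
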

%\comment{VG: Should the special case $n=1$ be stated (because $T_1$ does not need to be bounded) ? This case is mentioned on top of p.26 (old numbers).}

\begin{proof}
We proceed similarly as in the proof of Lemma~\ref{l:strong}.
From the relations \eqref{e:weak_reg} and \eqref{e:discrete_reg} we have, for all $\bS_h \in \mathbb M_{n,h}$, that
\begin{equation*}
\begin{split}
\int_\Omega (\mathcal A_n(\bT_{n,h}) - \mathcal A_n(\Pi_h \bT_n)):\bS_h \dd \bx
%\\
%&\quad
&= \int_\Omega \strain{\bu_{n,h} - \bu_n}:\bS_h \dd \bx \\
&\quad + \int_\Omega (\mathcal A_n (\bT_n) - \mathcal A_n(\Pi_h \bT_n)):\bS_h \dd \bx.
 \end{split}
 \end{equation*}
The choice $\bS_h = \bT_{n,h} - \Pi_h \bT_n \in \mathbb V_{n,h}$ guarantees that
$$
\int_\Omega \strain{\bv_h}: \bS_h \dd \bx = 0 \qquad \forall\, \bv_h \in \mathbb X_{n,h}.
$$
Thus by defining, for any $\bv_h$ in $\mathbb X_{n,h}$,
\[ \bU_{n,h}:= \strain{\bv_h-\bu_n} + \big(\mathcal A_n (\bT_n) - \mathcal A_n(\Pi_h \bT_n)\big),\]
and proceeding similarly as in the proof of the inequality \eqref{e:strong_conv1bis}, we have that
\begin{equation}\label{e:T-bound}
\begin{split}
&\frac{1}{n^2}\int_\Omega \frac{|\tr{\bT_{n,h}} - \tr{\Pi_h \bT_n}|^2}
{(|\tr{\Pi_h \bT_n}| + |\tr{\bT_{n,h}} - \tr{\Pi_h \bT_n}|)^{1-\frac{1}{n}}} \dd \bx
\\
&\qquad \qquad+ \frac{1}{n^2}\int_\Omega \frac{|\bT_{n,h}^\bd - \Pi_h \bT_n^\bd|^2}
{(|\Pi_h \bT_n^\bd| + |\bT_{n,h}^\bd - \Pi_h \bT_n^\bd|)^{1-\frac{1}{n}}}
\dd \bx
\\
&  \qquad \qquad\qquad \qquad \leq  \int_\Omega \bU_{n,h}:(\bT_{n,h}-\Pi_h \bT_n) \dd \bx.
\end{split}
\end{equation}

\noindent
Thanks to the equality \eqref{e:normT0},
\[|\Pi_h \bT^{\bd}_n(\bx) |^2 +  \frac 1 d |\tr{\Pi_h\bT_n}(\bx)|^2 = | \Pi_h \bT_n(\bx) |^2 \leq {\|\Pi_h\bT_n\|^2_{L_\infty(\Omega)}} \leq \|\bT_n\|^2_{L_\infty(\Omega)}\qquad \mbox{for a.e. $\bx \in \Omega$}.\]
Thus, by denoting ${\rm K}_n:=\max(1,\|\bT_n\|_{L_\infty(\Omega)})$, it follows that
\[ \|\Pi_h \bT^{\bd}_n \|_{L_\infty(\Omega)} \leq {\rm K}_n\qquad\mbox{and}\qquad \|\tr{\Pi_h\bT_n}\|_{L_\infty(\Omega)}
\leq d^{\frac{1}{2}} {\rm K}_n.\]
Hence we have from the inequality \eqref{e:T-bound} that
\begin{align*}
&\frac{1}{n^2}\int_\Omega \frac{|\tr{\bT_{n,h}} - \tr{\Pi_h \bT_n}|^2}
{(d^{\frac{1}{2}}{\rm K}_n + |\tr{\bT_{n,h}} - \tr{\Pi_h \bT_n}|)^{1-\frac{1}{n}}} \dd \bx
+ \frac{1}{n^2}\int_\Omega \frac{|\bT_{n,h}^\bd - \Pi_h \bT_n^\bd|^2}
{({\rm K}_n  + |\bT_{n,h}^\bd - \Pi_h \bT_n^\bd|)^{1-\frac{1}{n}}}
\dd \bx
\\
&  \qquad \leq  \int_\Omega \bU_{n,h}:(\bT_{n,h}-\Pi_h \bT_n) \dd \bx.
\end{align*}

\noindent
Because ${\rm K}_n \geq 1$ and by noting the decomposition $\bT = \frac{1}{d} \tr{\bT} \bI + \bT^\bd$, the above inequality implies that
\begin{align}
\begin{aligned}
&\hspace{-4mm}\frac{1}{n^2}(d^{\frac{1}{2}}{\rm K}_n)^{\frac{1}{n}-1} \int_\Omega \frac{|\tr{\bT_{n,h}} - \tr{\Pi_h \bT_n}|^2}
{(1 + |\tr{\bT_{n,h}} - \tr{\Pi_h \bT_n}|)^{1-\frac{1}{n}}} \dd \bx
\\
& \qquad \qquad + \frac{1}{n^2}({\rm K}_n)^{\frac{1}{n}-1} \int_\Omega \frac{|\bT_{n,h}^\bd - \Pi_h \bT_n^\bd|^2}
{(1  + |\bT_{n,h}^\bd - \Pi_h \bT_n^\bd|)^{1-\frac{1}{n}}}
\dd \bx
\\
&  \quad \leq
 \frac{1}{d} \int_\Omega \tr{\bU_{n,h}}\,\tr{\bT_{n,h}-\Pi_h \bT_n} \dd \bx  +  \int_\Omega \bU_{n,h}^\bd :(\bT_{n,h}-\Pi_h \bT_n)^\bd \dd \bx \label{eq:1sterbd}\\
&  \quad \leq
 \frac{1}{d} \int_\Omega |\tr{\bU_{n,h}}|\,|\tr{\bT_{n,h}}-\tr{\Pi_h \bT_n}| \dd \bx
+ \int_\Omega |\bU_{n,h}^\bd|\,|\bT_{n,h}^\bd-\Pi_h \bT_n^\bd| \dd \bx.
\end{aligned}
\end{align}

Let us consider the function $\Phi_n\,: \mathbb{R} \rightarrow \mathbb{R}_{\geq 0}$ defined by
\begin{equation}
\label{eq:modular}
 \Phi_n(s) := \frac{s^2}{(1 + |s|)^{1-\frac{1}{n}}},\qquad n \in \mathbb{N}.
 \end{equation}
The values of $s$ of interest to us below will always be in the range $[0,\infty)$, and therefore the absolute
value sign appearing in the denominator of $\Phi_n(s)$ can be ignored for such $s$.

Clearly, $\Phi_n(0)=0$, $\Phi_n$ is even, continuous, strictly monotonic increasing for $s \geq 0$, and convex, with
\begin{align}\label{e:asymp0}
\Phi_n(s) \asymp s^2\quad \mbox{as $s \rightarrow 0_+$}\qquad\mbox{and}\qquad \Phi_n(s) \asymp s^{1+\frac{1}{n}}\quad
\mbox{as $s \rightarrow +\infty$}.
\end{align}
Here $A \asymp B$ means that there exist constants $c$ and $\tilde c$ independent of $A$ and $B$ such that $cB \leq A \leq \tilde c B$. 
Following Rao \& Ren \cite{RR}, a function $\Phi : \mathbb{R} \rightarrow \mathbb{R}_{\geq 0}$ is called an \textit{N-function} (nice Young function), if: (i) $\Phi$ is even and convex; (ii) $\Phi(s)=0$ if, and only if, $s=0$; and (iii) $\lim_{s \rightarrow 0} \Phi(s)/s = 0$ and $\lim_{s \rightarrow +\infty} \Phi(s)/s = +\infty$.

%\comment{AB: do we need to also mention that $\Phi_n(t)/t > \Phi_n(s)/s$ for $t>s$ ? (sorry, I did not have access to [10] and %looked at Adams / Fournier. Maybe [10] is more complete.}
Hence, $\Phi_n$ is an N-function.
Simple calculations show that
\begin{align}\label{e:P}
 \Phi_n(2s) \leq 4 \Phi_n(s)\quad \forall\, s \in [0,\infty)\quad\mbox{and}\quad
\frac{1}{2c} \Phi_n(cs) \geq \Phi_n(s)\quad \forall\, s \in [0,\infty), \quad \forall\, c \geq 2^n;
\end{align}
therefore $\Phi_n$ satisfies the $\Delta_2$ and $\nabla_2$ conditions on $[0,\infty)$ (cf. Definition 1 on p.2 of \cite{RR}). Now, let $\Phi_n^*$ denote the convex conjugate of the function $\Phi_n$.
Then, $(\Phi_n,\Phi_n^*)$ is a pair of complementary N-functions and, by Theorem 2 on p.3 in \cite{RR}, $\Phi_n^*$ also satisfies the $\Delta_2$ and $\nabla_2$ conditions on $[0,\infty)$; i.e., there exists
a constant ${\rm K}={\rm K}(n)>2$ such that
\begin{align}\label{e:Pstar}
\Phi_n^*(2s) \leq {\rm K} \Phi_n^*(s) \qquad \forall\, s \in [0,\infty),
\end{align}
and there exists a constant $c=c(n)>1$ such that

\[ \frac{1}{2c} \Phi_n^*(cs) \geq \Phi_n^*(s)\qquad \forall\, s \in [0,\infty).\]
More precisely, by the inequality \eqref{e:a-ineq},
\[2^{\frac{1}{n}-1}\min\big(s^2,s^{1+\frac{1}{n}}\big) \leq  \Phi_n(s) \leq \min\big(s^2,s(1+s)^{\frac{1}{n}}\big)\qquad \forall\, s \in [0,\infty).\]
By recalling that $\Phi_n^*(s):= \sup_{t \geq 0} (st - \Phi_n(t))$, we get from \eqref{e:asymp0} that
\begin{align}\label{e:asymp}
\Phi^*_n(s) \asymp \frac{1}{4} s^2\quad \mbox{as $s \rightarrow 0_+$}\qquad\mbox{and}\qquad \Phi^*_n(s) \asymp \frac{s^{n+1}}{n+1} \left(\frac{n}{n+1}\right)^n\quad
\mbox{as $s \rightarrow +\infty$}.
\end{align}
Therefore, there exist positive constants $c_{1,n}$ and $c_{2,n}$, with $c_{1,n} \leq c_{2,n}$, such that
\[0 \leq \Phi^*_n(s) \leq c_{1,n} s^2\qquad \mbox{for all $s \in [0,1]$}\]
and
\[c_{1,n} \leq \Phi^*_n(s) \leq c_{2,n} s^{n+1}\qquad \mbox{for all $s \in [1,\infty)$}.\]
Reverting to \eqref{eq:1sterbd}, by the Fenchel--Young inequality, for any real number $\delta>0$,
\begin{align*}
&\frac{1}{n^2}(d^{\frac{1}{2}}{\rm K}_n)^{\frac{1}{n}-1} \int_\Omega \Phi_n(|\tr{\bT_{n,h}} - \tr{\Pi_h \bT_n}|) \dd \bx
+ \frac{1}{n^2}({\rm K}_n)^{\frac{1}{n}-1} \int_\Omega \Phi_n(|\bT_{n,h}^\bd - \Pi_h \bT_n^\bd|)
\dd \bx
\\
&  \quad \leq   \frac{1}{d} \int_\Omega |\tr{\bU_{n,h}}|\,|\tr{\bT_{n,h}}-\tr{\Pi_h \bT_n}| \dd \bx +
\int_\Omega |\bU_{n,h}^\bd|\,|\bT_{n,h}^\bd-\Pi_h \bT_n^\bd| \dd \bx
\\
&  \quad =  \frac{1}{d\delta} \int_\Omega |\tr{\bU_{n,h}}|\,\delta\, |\tr{\bT_{n,h}}-\tr{\Pi_h \bT_n}| \dd \bx +
\frac{1}{\delta} \int_\Omega |\bU_{n,h}^\bd|\,\delta\,|\bT_{n,h}^\bd-\Pi_h \bT_n^\bd| \dd \bx
\\
&  \quad \leq \frac{1}{d\delta} \int_\Omega \Phi_n(\delta |\tr{\bT_{n,h}}-\tr{\Pi_h \bT_n}|) \dd \bx
+ \frac{1}{d\delta} \int_\Omega \Phi_n^*(|\tr{\bU_{n,h}}|) \dd \bx\\
& \qquad + \frac{1}{\delta} \int_\Omega \Phi_n(\delta |\bT_{n,h}^\bd-\Pi_h \bT_n^\bd|)  \dd \bx +
 \frac{1}{\delta} \int_\Omega \Phi_n^*(|\bU_{n,h}^\bd|) \dd \bx.
\end{align*}
Clearly, for any $a \in \mathbb{R}_{\geq 0}$ and $\delta \in (0,1]$, we have that
\[ \Phi_n(\delta a) = \frac{\delta^2 a^2}{(1 + \delta a)^{1-\frac{1}{n}}} \leq \delta^{1+ \frac{1}{n}}\Phi_n(a).\]
Hence,
\begin{align*}
&\frac{1}{n^2}(d^{\frac{1}{2}}{\rm K}_n)^{\frac{1}{n}-1} \int_\Omega \Phi_n(|\tr{\bT_{n,h}} - \tr{\Pi_h \bT_n}|) \dd \bx
+ \frac{1}{n^2}({\rm K}_n)^{\frac{1}{n}-1} \int_\Omega \Phi_n(|\bT_{n,h}^\bd - \Pi_h \bT_n^\bd|)
\dd \bx
\\
&  \quad \leq \frac{\delta^{\frac{1}{n}}}{d} \int_\Omega \Phi_n(|\tr{\bT_{n,h}}-\tr{\Pi_h \bT_n}|) \dd \bx
+ \frac{1}{d\delta} \int_\Omega \Phi_n^*(|\tr{\bU_{n,h}}|) \dd \bx\\
&\qquad + \delta^{\frac{1}{n}} \int_\Omega \Phi_n(|\bT_{n,h}^\bd-\Pi_h \bT_n^\bd|)  \dd \bx +
 \frac{1}{\delta} \int_\Omega \Phi_n^*(|\bU_{n,h}^\bd|) \dd \bx.
\end{align*}

\noindent
Let $\delta_1, \delta_2 > 0$ be such that
\[ \frac{d}{2n^2}(d^{\frac{1}{2}}{\rm K}_n)^{\frac{1}{n}-1} =  \delta_1^{\frac{1}{n}}\quad \mbox{and}\quad
\frac{1}{2n^2}({\rm K}_n)^{\frac{1}{n}-1} =  \delta_2^{\frac{1}{n}}
.\]
Thus, with $\delta:=\min(1,\delta_1,\delta_2)$, we have that
\begin{align}\label{e:delta}
\begin{aligned}
& \int_\Omega \Phi_n(|\tr{\bT_{n,h}} - \tr{\Pi_h \bT_n}|) \dd \bx
+  \int_\Omega \Phi_n(|\bT_{n,h}^\bd - \Pi_h \bT_n^\bd|)
\dd \bx
\\
&  \qquad \leq C(d,n,{\rm K}_n) \left(\int_\Omega \Phi_n^*(|\tr{\bU_{n,h}}|) \dd \bx +
\int_\Omega \Phi_n^*(|\bU_{n,h}^\bd|) \dd \bx\right).
\end{aligned}
\end{align}
Now, the assumption \eqref{e:lipsch} and \eqref{e:holder} yield
\begin{align*}
|\bU_{n,h}| &\leq |\strain{\bv_h-\bu_n}| + |\mathcal A_n (\bT_n) - \mathcal A_n(\Pi_h \bT_n)|\\
& \leq |\strain{\bv_h-\bu_n}| + C\left(|\bT_n - \Pi_h \bT_n|^{\frac{1}{n}} + |\bT_n - \Pi_h \bT_n|^{\beta}\right).
\end{align*}
As $\Phi_n^*$ is an N-function, it is strictly monotonic increasing (cf. the top of p.2 in \cite{RR}) and
convex, and therefore by \eqref{e:Pstar},
\begin{align}\label{e:Ubound}
\begin{aligned}
\Phi_n^*(|\bU_{n,h}|) &\leq \Phi_n^*(|\strain{\bv_h-\bu_n}| + |\mathcal A_n (\bT_n) - \mathcal A_n(\Pi_h \bT_n)|)
\\
& \leq \frac{1}{2}\bigg(\Phi_n^*(2|\strain{\bv_h-\bu_n}|) + \Phi_n^*(2|\mathcal A_n (\bT_n) - \mathcal A_n(\Pi_h \bT_n)|)\bigg)\\
& \leq \frac{{\rm K}}{2} \bigg(\Phi_n^*(|\strain{\bv_h-\bu_n}|) + \Phi_n^*(|\mathcal A_n (\bT_n) - \mathcal A_n(\Pi_h \bT_n)|)\bigg).
\end{aligned}
\end{align}
In order to proceed we need to bound the right-hand side of the last inequality and that involves comparing
\begin{equation*}
\mathcal A_{n}(\bT_n) := \lambda(\tr{\bT_n}) \tr{\bT_n} \bI + \mu( | \bT_n^{\bd}|) \bT_n^{\bd}
+ \frac{\tr{\bT_n} \bI}{n|\tr{\bT_n}|^{1-\frac{1}{n}}} + \frac{\bT_n^{\bd}}{n|\bT_n^\bd|^{1-\frac{1}{n}}}
\end{equation*}
with
\begin{equation*}
\mathcal A_{n}(\Pi_h\bT_n) := \lambda(\tr{\Pi_h\bT_n}) \tr{\Pi_h\bT_n} \bI + \mu( |\Pi_h \bT_n^{\bd}|) \Pi_h\bT_n^{\bd}
+ \frac{\tr{\Pi_h\bT_n} \bI}{n|\tr{\Pi_h\bT_n}|^{1-\frac{1}{n}}} + \frac{\Pi_h\bT_n^{\bd}}{n|\Pi_h\bT_n^\bd|^{1-\frac{1}{n}}}.
\end{equation*}
We have from inequalities \eqref{e:lipsch} and \eqref{e:holder} that
\begin{align*}
|\mathcal A_{n}(\bT_n) - \mathcal A_{n}(\Pi_h\bT_n)| &\leq d^{\frac{1}{2}}\Lambda |\tr{\bT_n} - \Pi_h \tr{\bT_n}|^\beta
+ \Lambda |\bT_n^\bd - \Pi_h \bT_n^\bd|^\beta\\
& \quad + \frac{d^{\frac{1}{2n}}}{n}\,2^{1-\frac{1}{n}} \, |\tr{\bT_n} - \tr{\Pi_h \bT_n}|^{\frac{1}{n}}\\
& \quad + \, C(d,n) |\bT_n^{\bd} - \Pi_h \bT_n^{\bd}|^{\frac{1}{n}}\\
& \leq C(d,\Lambda, \beta)\,|\bT_n - \Pi_h \bT_n|^\beta + C(d,n)\,|\bT_n - \Pi_h \bT_n|^{\frac{1}{n}}\\
& \leq C(d,\Lambda, \beta,n, {\rm K}_n)|\bT_n - \Pi_h \bT_n|^{\min(\beta,\frac{1}{n})}.
\end{align*}
%\comment{VG: Are you sure of the factor $d^{\frac{1}{2n}}$ in the second line?\\
%ES: Yes, it is coming from the $d^{\frac{1}{2n}}$ in \eqref{e:holder}, so I think that $d^{\frac{1}{2n}}$ is correct, although of course one can also bound that above further by $d^{\frac{1}{2}}$ for all $n \geq 1$.
%I have changed $d^{\frac{1}{2}}$ back to $d^{\frac{1}{2n}}$, for what it's worth. :-)}
By the inequality \eqref{e:Pstar}, $\Phi_n^*(2^\ell s) \leq {\rm K}^\ell \Phi_n^*(s)$ for all $s \in [0,\infty)$ and all $\ell \geq 1$. Hence, with
$$\ell:= [\log_2 C(d,\Lambda, \beta,n, {\rm K}_n)] + 1$$ we have that $C(d,\Lambda, \beta,n, {\rm K}_n)
\leq 2^\ell$, whereby
\begin{align*}
\Phi_n^*(|\mathcal A_{n}(\bT_n) - \mathcal A_{n}(\Pi_h\bT_n)|) &\leq
\Phi_n^*(C(d,\Lambda, \beta,n, {\rm K}_n)|\bT_n - \Pi_h \bT_n|^{\min(\beta,\frac{1}{n})})\\
&\leq \Phi_n^*(2^\ell|\bT_n - \Pi_h \bT_n|^{\min(\beta,\frac{1}{n})})\\
&\leq {\rm K}^\ell \Phi_n^*(|\bT_n - \Pi_h \bT_n|^{\min(\beta,\frac{1}{n})}).
\end{align*}
By substituting this into the inequality \eqref{e:Ubound} we deduce that
\begin{align*}
\Phi_n^*(|\bU_{n,h}|) &\leq \Phi_n^*(|\strain{\bv_h-\bu_n}| + |\mathcal A_n (\bT_n) - \mathcal A_n(\Pi_h \bT_n)|)
\\
& \leq \frac{1}{2} {\rm K} \Phi_n^*(|\strain{\bv_h-\bu_n}|) + \frac{1}{2}{\rm K}^{\ell+1} \Phi_n^*(|\bT_n - \Pi_h \bT_n|^{\min(\beta,\frac{1}{n})}).
\end{align*}
We then substitute this into the inequality \eqref{e:delta} and note, once again, the monotonicity of $\Phi_n^*$, which gives
\begin{align*}
& \int_\Omega \Phi_n(|\tr{\bT_{n,h}} - \tr{\Pi_h \bT_n}|) \dd \bx
+  \int_\Omega \Phi_n(|\bT_{n,h}^\bd - \Pi_h \bT_n^\bd|)
\dd \bx
\\
&  \qquad \leq C(d,\Lambda,\beta,n,{\rm K}_n,{\rm K})\left(\int_\Omega \Phi_n^*(|\strain{\bv_h-\bu_n}|)\dd \bx + \int_\Omega \Phi_n^*(|\bT_n - \Pi_h \bT_n|^{\min(\beta,\frac{1}{n})})\dd \bx\right).
\end{align*}
For any pair of numbers $a, b \in \mathbb{R}_{\geq 0}$, by \eqref{e:P} and convexity, we have $\Phi_n(a+b) \leq 2 \Phi_n(a) + 2 \Phi_n(b)$; hence, by the inequality \eqref{e:normT1},
\begin{align*}
& \int_\Omega \Phi_n(|\bT_{n,h} - \Pi_h \bT_n|) \dd \bx \\
&  \qquad \leq C(d,\Lambda,\beta,n,{\rm K}_n,{\rm K})\left(\int_\Omega \Phi_n^*(|\strain{\bv_h-\bu_n}|)\dd \bx + \int_\Omega \Phi_n^*(|\bT_n - \Pi_h \bT_n|^{\min(\beta,\frac{1}{n})})\dd \bx\right).
\end{align*}
As this inequality holds for all $\bv_h \in \mathbb{X}_{n,h}$, the bound \eqref{e:e.inequ} directly follows.
\end{proof}

The error bound \eqref{e:e.inequ} can be restated in the following equivalent form. Given an N-function $\Psi$, let
\[ \tilde{L}_{\Psi}(\Omega):= \bigg\{ \bS\,:\,\Omega \rightarrow \mathbb{R}^{d \times d}_{\rm sym}\quad \mbox{measurable, such that}\quad \rho_\Psi(\bS):=
\int_\Omega \Psi(|\bS(\bx)|) \dd \bx < \infty\bigg\};\]
the function $\rho_\Psi(\cdot)$ is called a \textit{modular}. In terms of the modulars $\rho_{\Phi_n}$ and $\rho_{\Phi_n^*}$ the error
bound \eqref{e:e.inequ} takes the form:
\begin{alignat}{2}
\label{e:e.inequ1}
\begin{aligned}
& \rho_{\Phi_n}(|\bT_{n,h} - \Pi_h \bT_n|) \\
&  \qquad \leq C(d,\Lambda,\beta,n,{\rm K}_n,{\rm K})\left(\inf_{\bv_h \in \mathbb{X}_{n,h}}\rho_{\Phi_n^*}(|\strain{\bv_h-\bu_n}|) + \rho_{\Phi_n^*}(|\bT_n - \Pi_h \bT_n|^{\min(\beta,\frac{1}{n})})\right).
\end{aligned}
\end{alignat}
Here, as before,
\[ \Phi_n(s):= \frac{s^2}{(1+s)^{1-\frac{1}{n}}},\qquad s \in [0,\infty),\quad n \in \mathbb{N},\]
and $\Phi_n^*$ is the convex conjugate of $\Phi_n$.

\smallskip

Under the above assumptions,  convergence rates can be derived by strengthening the regularity hypothesis
$\bT_n \in L_\infty(\Omega)^{d \times d}_{\rm sym}$ from Theorem \ref{thm:err.inequ}.
Thus, for example, suppose that
\[ \bT_n \in W^{r,q}(\Omega)^{d\times d}_{\rm sym} \qquad \mbox{with $1 \geq r > \frac{d}{q}$}\qquad \mbox{and}\qquad \bu_n \in W^{1+t,p}(\Omega)^d \cap W^{1,n+1}_0(\Omega)^d\qquad \mbox{with $1\geq t>\frac{d}{p}$},\]
and $q,p \in (1,\infty]$, which ensure, by Morrey's embedding theorem, that
\[ \bT_n \in W^{r,q}(\Omega)^{d\times d}_{\rm sym} \hookrightarrow \mathcal{C}^{0,\gamma}(\overline{\Omega})^{d \times d}_{\rm sym}
\hookrightarrow  W^{\gamma,\infty} (\Omega)^{d \times d}_{\rm sym} \qquad \mbox{with $\gamma := r- \frac{d}{q}$};\]
and
\[ \strain{\bu_n} \in W^{t,p}(\Omega)^{d\times d}_{\rm sym} \hookrightarrow \mathcal{C}^{0,\zeta}(\overline{\Omega})^{d \times d}_{\rm sym} \hookrightarrow  W^{\zeta,\infty}  (\Omega)^{d\times d}_{\rm sym}\qquad \mbox{with $\zeta := t - \frac{d}{p}$}.\]
With these stronger regularity hypotheses we then have that
\[|\bT_n(\bx) - \Pi_h \bT_n(\bx)|\leq C h_K^\gamma \|\bT_n\|_{W^{\gamma,\infty}(K)} \leq C h^\gamma \|\bT_n\|_{W^{\gamma,\infty}(\Omega)}\qquad \forall\, \bx \in K, \quad \forall\, K \in \mathcal{T}_h.\]
Thus, thanks to the fact that $\Phi_n^*$ is monotonic increasing, and by the first asymptotic property in \eqref{e:asymp},
\[ \rho_{\Phi_n^*}(|\bT_n - \Pi_h \bT_n|^{\min(\beta,\frac{1}{n})})  \leq \rho_{\Phi_n^*}(Ch^{\gamma\min(\beta,\frac{1}{n})}\|\bT_n\|_{W^{\gamma,\infty}(\Omega)}^{\min(\beta,\frac{1}{n})}) \asymp
Ch^{2\gamma\min(\beta,\frac{1}{n})}\|\bT_n\|_{W^{\gamma,\infty}(\Omega)}^{2\min(\beta,\frac{1}{n})}\]
as $h \rightarrow 0_+$. Analogously,
\[\inf_{\bv_h \in \mathbb{X}_{n,h}}\rho_{\Phi_n^*}(|\strain{\bv_h}-\strain{\bu_n}|) \leq
\rho_{\Phi_n^*}(Ch^{\zeta}\|\strain{\bu_n}\|_{W^{\zeta,\infty}(\Omega)}) \asymp
Ch^{2\zeta}\|\strain{\bu_n}\|_{W^{\zeta,\infty}(\Omega)}^2\qquad \mbox{as $h \rightarrow 0_+$}.
\]
By substituting these bounds into the error inequality \eqref{e:e.inequ1} we deduce that
\begin{alignat*}{2}
\rho_{\Phi_n}(|\bT_{n,h} - \Pi_h \bT_n|) \leq C\left(h^{2\zeta}\|\strain{\bu_n}\|_{W^{\zeta,\infty}(\Omega)}^2 +  h^{2\gamma\min(\beta,\frac{1}{n})}\|\bT_n\|_{W^{\gamma,\infty}(\Omega)}^{2\min(\beta,\frac{1}{n})}\right),
\end{alignat*}
as $h \rightarrow 0_+$. In particular, if $\beta = \frac{1}{n}$ and $\zeta = \frac{\gamma}{n}$,
\begin{alignat}{2}\label{e:modulbd}
\rho_{\Phi_n}(|\bT_{n,h} - \Pi_h \bT_n|) \leq Ch^{2\frac{\gamma}{n}}\left(\|\strain{\bu_n}\|_{W^{\frac{\gamma}{n},\infty}(\Omega)}^2 + \|\bT_n\|_{W^{\gamma,\infty}(\Omega)}^{\frac{2}{n}}\right),
\end{alignat}
as $h \rightarrow 0_+$, where $\gamma \in (0,1]$ and $n \in \mathbb{N}$.
The error bound \eqref{e:modulbd} on $\rho_{\Phi_n}(|\bT_{n,h} - \Pi_h \bT_n|)$ can be used to derive bounds on norms of
the error $|\bT_{n,h} - \Pi_h \bT_n|$. For example, in the special case when $n=1$, we have that $\Phi_n(s) = s^2$, and therefore
\begin{alignat*}{2}
\|\bT_{n,h} - \Pi_h \bT_n\|_{L_2(\Omega)} \leq Ch^{\gamma}\left(\|\strain{\bu_n}\|_{W^{\gamma,\infty}(\Omega)} + \|\bT_n\|_{W^{\gamma,\infty}(\Omega)}\right),
\end{alignat*}
as $h \rightarrow 0_+$, where $\gamma \in (0,1]$. In this special case, the regularity requirements on $\bu$ and $\bT$
can, in fact, be relaxed to $\bu_n \in W^{1+\gamma,2}(\Omega)^{d \times d}_{\rm sym} \cap W^{1,2}_0(\Omega)^{d \times d}_{\rm sym}$ and $\bT_n \in W^{\gamma,2}(\Omega)^{d \times d}_{\rm sym}$, $\gamma \in (0,1]$.

More generally, for $n \in \mathbb{N}$, we divide the inequality \eqref{e:modulbd} by $|\Omega|$, recall the definition of the modular $\rho_{\Phi_n}(\cdot)$, and apply Jensen's inequality on the left-hand side to deduce that
\[ \Phi_n\bigg(\Xint{-}_\Omega |\bT_{n,h} - \Pi_h \bT_n| \dd \bx\bigg) \leq Ch^{2\frac{\gamma}{n}}\left(\|\strain{\bu_n}\|_{W^{\frac{\gamma}{n},\infty}(\Omega)}^2 + \|\bT_n\|_{W^{\gamma,\infty}(\Omega)}^{\frac{2}{n}}\right),\]
as $h \rightarrow 0_+$, where $\gamma \in (0,1]$.
Because $\Phi_n^{-1}$, the inverse function of $\Phi_n$ (which is uniquely defined on $[0,\infty)$ thanks to the fact that
$\Phi_n$ is strictly monotonic increasing on $[0,\infty)$), is monotonic increasing,
we have that
\[ \Xint{-}_\Omega |\bT_{n,h} - \Pi_h \bT_n| \dd \bx \leq \Phi_n^{-1}\left(Ch^{2\frac{\gamma}{n}}\left(\|\strain{\bu_n}\|_{W^{\frac{\gamma}{n},\infty}(\Omega)}^2 + \|\bT_n\|_{W^{\gamma,\infty}(\Omega)}^{\frac{2}{n}}\right)\right),\]
as $h \rightarrow 0_+$, where $\gamma \in (0,1]$ and $n \in \mathbb{N}$. Since $\Phi_n(s) \asymp s^2$ as $s \rightarrow 0_+$,
it follows that $\Phi_n^{-1}(s) \asymp s^\frac{1}{2}$ as $s \rightarrow 0_+$, and therefore
\begin{equation}
\label{eq:finalerrbound}
 \|\bT_{n,h} - \Pi_h \bT_n\|_{L_1(\Omega)} \leq Ch^{\frac{\gamma}{n}}\left(\|\strain{\bu_n}\|_{W^{\frac{\gamma}{n},\infty}(\Omega)} + \|\bT_n\|_{W^{\gamma,\infty}(\Omega)}^{\frac{1}{n}}\right),
 \end{equation}
as $h \rightarrow 0_+$, where $\gamma \in (0,1]$, $n \in \mathbb{N}$, and $C=C(d,\Lambda, n, {\rm K}_n,{\rm K},\gamma, |\Omega|)$.

%---------------------

\subsection{Other elements that fit into the theory}
\label{subsec:otherfit}

 We shall comment here on some alternative choices of finite element spaces to which our analysis applies. Let $\mathcal{Q}^r_h$
denote the finite element space on quadrilateral or hexahedral meshes for $d=2$ or $d=3$, respectively, consisting of (possibly discontinuous) mapped piecewise $d$-variate functions that are polynomials of degree $r$ in each variable
over each element in the subdivision.   We consider the conforming finite element spaces
\begin{equation}
\label{e:pair_QQ}
\mathbb M_{n,h} :=  \left(\mathcal Q^{r}_h\right)^{d\times d}_{\rm sym}\subset \mathbb M_n,
\qquad \mathbb X_{n,h} := \left(\mathcal Q^{r}_h\right)^{d}  \cap \mathbb X_n \subset \mathbb X_n,
\end{equation}
for the approximation of $\bT_n$ and $\bu_n$, respectively. Clearly, $\strain{\mathbb X _{n,h}} \subset \mathbb M_{n,h}$ and
the $L_2(\Omega)^{d \times d}$ orthogonal projector $\Pi_h\,:\, \mathbb{M}_n \mapsto \mathbb{M}_{n,h}$ is stable in the $L_p(\Omega)^{d \times d}$ norm for all $p \in [1,\infty]$.\footnote{This stability result is a consequence of the stability  in the $L_p(-1,1)$ norm of the $L_2(-1,1)$ orthogonal projection onto the space of univariate polynomials of degree $r$ on the interval $(-1,1)$, for all $p \in [1,\infty]$, with a stability constant $C_{r,p} = C\cdot r^{\frac{1}{2}\left|1 - \frac{2}{p}\right|}$;
for $p=\infty$, see Gronwall \cite{Gron} eq. (29) on p.230; for $p=2$, $C_{r,2}=1$ for all $r \geq 1$; for $p \in (2,\infty)$, the form of $C_{r,p}$ follows by function space interpolation; and for $p \in [1,2)$ it follows from the result for $p=(2,\infty]$ by duality.}
Then, Lemma \ref{l:weak} can be shown to
hold by an identical argument;
if in addition it is assumed that $\bT_n \in L_\infty(\Omega)^{d \times d}_{\rm sym}$, then Lemma \ref{l:strong} and Theorem \ref{thm:err.inequ} also hold. We note that our proof of Lemma \ref{l:strong} in the special case of
\begin{equation}
\label{e:pair_PP}
\mathbb M_{n,h} = \left(\mathcal P^{0}_h\right)^{d \times d}_{\rm sym}\subset \mathbb M_n\qquad \mbox{and} \qquad \mathbb X_{n,h} = \left(\mathcal P^{1}_h\right)^{d}  \cap \mathbb X_n \subset \mathbb X_n
\end{equation}
did not require the additional assumption $\bT_n \in L_\infty(\Omega)^{d \times d}_{\rm sym}$, thanks to the connection
between the explicit formula for the projection onto piecewise constant functions and the Hardy--Littlewood maximal function.

%---------------------

\subsection{A simple quadrilateral/hexahedral element to which the theory does not apply}
\label{subsec:notheory}
The simplest extension to quadrilaterals or hexahedra of the spaces defined in \eqref{e:P0P1} is of course
\begin{equation}
\label{e:Q1--P0}
\mathbb M_{n,h} :=  \left(\mathcal Q^{0}_h\right)^{d\times d}_{\rm sym}\subset \mathbb M_n,
\qquad \mathbb X_{n,h} := \left(\mathcal Q^{1}_h\right)^{d}  \cap \mathbb X_n \subset \mathbb X_n,
\end{equation}
for the approximation of $\bT_n$ and $\bu_n$, respectively. Everything done previously applies to this pair of elements, {\em except the uniform discrete inf-sup condition}. Indeed the proof of Lemma \ref{l:discrete_infsup} does not carry over to this case because  $\strain{\mathbb X _{n,h}}$ is not contained in $\mathbb M_{n,h}$.

Let us look more closely at the greatest lower bound in \eqref{e:LBBh}, say $\beta_h$. First, for any given $\bv_{h}$, the choice in each element $K$ (which generalizes \eqref{e:PIh})
\begin{equation}
\label{e:bTh}
\bT_{h} = \frac{1}{|K|^n}\bigg(\int_K \strain{\bv_{h}}\dd\bx\bigg)\, \bigg|\int_K \strain{\bv_{h}}\dd\bx\bigg|^{n-1},
\end{equation}
shows that $\beta_h \ge 0$. The next lemma shows that on a structured mesh (i.e.,  a mesh with a Cartesian numbering), $\beta_h \ne 0$. To avoid excessive technicalities, it is stated for quadrilaterals, but it extends to structured hexahedral meshes.

\begin{prop}
\label{pro:beta>0}
Let $\mathcal T_h$ be a structured quadrilateral mesh. Then, the greatest lower bound $\beta_h$ in \eqref{e:LBBh} is strictly positive.
\end{prop}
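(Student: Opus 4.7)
The plan is to exploit that $\mathbb X_{n,h}$ and $\mathbb M_{n,h}$ are finite-dimensional. Korn's inequality \eqref{e:Korn1} applied inside $\mathbb X_{n,h}$ shows $\|\strain{\bv_h}\|_{L_{n+1}(\Omega)} > 0$ whenever $\bv_h \ne \mathbf{0}$, so the unit sphere $\{\bv_h \in \mathbb X_{n,h} : \|\strain{\bv_h}\|_{L_{n+1}(\Omega)} = 1\}$ is nonempty and compact. On this sphere, the map $\bv_h \mapsto \sup_{\bS_h \in \mathbb M_{n,h}\setminus\{\mathbf{0}\}} b(\bS_h,\bv_h)/\|\bS_h\|_{L_{1+\frac{1}{n}}(\Omega)}$ is continuous (it is the dual norm of the linear functional $\bS_h \mapsto b(\bS_h,\bv_h)$ on the finite-dimensional Banach space $(\mathbb M_{n,h},\|\cdot\|_{L_{1+\frac{1}{n}}(\Omega)})$), so the infimum $\beta_h$ in \eqref{e:LBBh} is attained. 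Thus $\beta_h = 0$ would produce a $\bv_h^\star \in \mathbb X_{n,h}\setminus\{\mathbf{0}\}$ with $b(\bS_h,\bv_h^\star) = 0$ for every $\bS_h \in \mathbb M_{n,h}$, and it is this case I would contradict.

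Next I would localize. Since $\mathbb M_{n,h}$ contains the indicator tensor $\chi_K \bE$ for every cell $K \in \mathcal T_h$ and every symmetric constant tensor $\bE \in \mathbb R^{d\times d}_{\rm sym}$, the global vanishing of $b(\cdot,\bv_h^\star)$ is equivalent to the cell-wise condition
\[
\int_K \strain{\bv_h^\star}\dd\bx = \mathbf{0} \qquad \forall\, K \in \mathcal T_h.
\]
The proposition therefore reduces to the purely algebraic claim that on a structured quadrilateral mesh this condition, together with the homogeneous Dirichlet boundary condition built into $\mathbb X_{n,h}$, forces $\bv_h^\star = \mathbf{0}$.

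For this algebraic claim it already suffices to use only the two diagonal entries, $\int_K \partial_x v_1^\star\dd\bx = 0$ and $\int_K \partial_y v_2^\star\dd\bx = 0$ for every $K$. On a rectangular structured grid with $K_{ij} = [x_{i-1},x_i]\times[y_{j-1},y_j]$, a direct expansion of the bilinear ansatz for $v_1^\star$ in terms of its nodal values $(v_1^\star)_{k,\ell} = v_1^\star(x_k,y_\ell)$ yields
\[
\frac{1}{|K_{ij}|}\int_{K_{ij}}\partial_x v_1^\star\dd\bx
=\frac{(v_1^\star)_{i,j}+(v_1^\star)_{i,j-1}-(v_1^\star)_{i-1,j}-(v_1^\star)_{i-1,j-1}}{2(x_i-x_{i-1})},
\]
so the constraint says that $(v_1^\star)_{i,j}+(v_1^\star)_{i,j-1}$ is independent of $i$. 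The Dirichlet boundary values $(v_1^\star)_{0,j} = 0$ pin this $i$-independent quantity to $0$, hence $(v_1^\star)_{i,j} = -(v_1^\star)_{i,j-1}$; combined with $(v_1^\star)_{i,0} = 0$, a trivial induction in $j$ gives $(v_1^\star)_{i,j}=0$ at every vertex, so $v_1^\star\equiv 0$. The same argument applied to the $(2,2)$-component forces $v_2^\star\equiv 0$, contradicting $\bv_h^\star \ne \mathbf{0}$.

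The main obstacle is extending the combinatorial propagation argument from axis-aligned rectangular grids to genuinely distorted structured quadrilateral meshes (those with Cartesian vertex topology but non-rectangular cells). In that case the cell average has to be computed by pullback to the reference square, or equivalently through the boundary-integral identity $\int_K \partial_x v_1\dd\bx = \int_{\partial K} v_1 n_x\dd s$, which for a bilinear function on a quadrilateral produces a linear combination of the four corner values weighted by the diagonal differences of the $y$-coordinates of opposite vertices. The resulting per-cell relation remains nondegenerate, but its mesh-dependent coefficients destroy the clean alternation structure used above, so the boundary-to-interior propagation would have to be reorganized to work with these weights.
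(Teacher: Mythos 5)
Your reduction step is correct and matches the paper's in substance: by finite-dimensional compactness the infimum is attained, and since $\mathbb M_{n,h}=(\mathcal Q^0_h)^{d\times d}_{\rm sym}$ the global vanishing of $b(\cdot,\bv_h^\star)$ is equivalent to $\int_K \strain{\bv_h^\star}\dd\bx=\mathbf 0$ on every cell (the paper instead substitutes the specific test tensor \eqref{e:bTh}, which produces the same conclusion). Your computation for axis-aligned rectangular grids, using only the two diagonal strain averages and propagating $(v_1^\star)_{i,j}=-(v_1^\star)_{i,j-1}$ from the boundary, is also correct.

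The gap is exactly the one you flagged at the end: the proposition concerns general structured quadrilateral meshes, i.e.\ Cartesian vertex topology with genuinely distorted cells, and your propagation argument is established only for rectangles. On a distorted cell, $\int_K\partial_x v_1\,\dd\bx$ involves both $\hat\partial_1$ and $\hat\partial_2$ derivatives with coefficients built from the physical vertex coordinates (the paper writes these out explicitly through the Piola-type chain rule for $\mathcal F_K$), so the clean ``$S_{i,j}$ is independent of $i$'' recursion is lost. The paper closes this gap differently: rather than using the two diagonal entries separately row by row, it exploits the \emph{full} condition $\big|\int_K\strain{\bv_h}\dd\bx\big|^2=0$ on a corner cell, where three of the four vertices already carry zero Dirichlet data, and shows this quadratic form in the two remaining unknowns is positive definite as long as the diagonal $|\ba_4-\ba_2|>0$; this forces the fourth vertex value to vanish, and the argument then advances element by element from corners along the boundary and inward. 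So the missing idea is precisely the corner-element positive-definiteness calculation that replaces your separable row/column recursion when the cells are not axis-aligned; without it, your proof covers only a strict subclass of the meshes in the statement.
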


\begin{proof}
We argue by contradiction. Suppose that $\beta_h =0$. Then there is a displacement $\bv_h$ in $\mathbb X _{n,h}$ such that
$$\sup_{\bS_{h} \in \mathbb M_{n,h}} b(\bS_{h},\bv_h) = 0.
$$
In particular $b(\bT_{h},\bv_h) = 0$ for $\bT_{h}$ defined by \eqref{e:bTh}. This implies that
\begin{equation}
\label{e:Thvh=0}
\bigg|\int_K \strain{\bv_{h}}\dd\bx \bigg| = 0\qquad \forall\, K \in \mathcal T_h.
\end{equation}
Let us examine the consequences of \eqref{e:Thvh=0} on specific elements $K$ of the mesh. Let $\hat K = [0,1]^2$ be the reference square with vertices $\hat\ba_1 = (0,0)$, $\hat\ba_2 = (1,0)$, $\hat\ba_3 = (1,1)$, $\hat\ba_4 = (0,1)$. Let $\ba_i, 1\le i \le 4$ denote the vertices of $K$ and $\mathcal F_K$ the bilinear mapping from $\hat K$ onto $K$ that maps $\hat\ba_i$ to $\ba_i$, $1\le i \le 4$. Since the mesh is assumed to be nondegenerate, $\mathcal F_K$ is invertible and the functions of $\mathcal Q^{1}_h$ are the images by $\mathcal F_K^{-1}$ of the functions of $\hat Q^{1}$ defined on $\hat K$. Their derivatives are transformed as follows:
\begin{equation}
\nonumber
\begin{split}
\frac{\partial v}{ \partial x_1}\circ {\mathcal F}_K =& \frac{1}{\mathcal J_K}\Big(\frac{\partial
\hat v}{\partial \hat x_1}\big(a^4_2-a^1_2+ \hat x_1 (a^3_2-a^2_2-a^4_2+a^1_2)\big)
-\frac{\partial \hat v}{\partial \hat x_2}\big(a^2_2-a^1_2+ \hat x_2 (a^3_2-a^2_2-a^4_2+a^1_2)\big)
\Big),\\
\frac{\partial v}{\partial x_2}\circ {\mathcal F}_K =& \frac{1}{\mathcal J_K}\Big(\frac{\partial
\hat v}{\partial \hat x_2}\big(a^2_1-a^1_1+ \hat x_2 (a^3_1-a^2_1-a^4_1+a^1_1)\big)
-\frac{\partial \hat v}{\partial \hat x_1}\big(a^4_1-a^1_1+ \hat x_1 (a^3_1-a^2_1-a^4_1+a^1_1)\big)
\Big),
\end{split}
\end{equation}
the subscript indicating the coordinate, and ${\mathcal J}_K$ the Jacobian of ${\mathcal F}_K$.

Now, let us start with a corner element; since the mesh is structured, all corner elements have at least two sides and three vertices on the boundary, say $\ba_1$, $\ba_2$, and $\ba_4$. As $\bv_h$ vanishes on $\partial \Omega$, this means that $\bv_h(\ba_1) = \bv_h(\ba_2)=\bv_h(\ba_4) = {\bf 0}$ and thus
\begin{equation}
\nonumber
\begin{split}
\Big(\int_K \strain{\bv_{h}}\dd\bx\Big) : \Big(\int_K \strain{\bv_{h}}\dd\bx\Big) =  \frac{1}{4}\Big[&(\hat v_1^3)^2 (a_2^4-a_2^2)^2 + (\hat v_2^3)^2 (a_1^4-a_1^2)^2\Big]\\
& + \frac{1}{4}\Big[\frac{1}{4}\big( \hat v_2^3 (a_2^4-a_2^2) + \hat v_1^3 (a_1^4-a_1^2)\big)^2\Big] =0.
\end{split}
\end{equation}
As $|\ba_4-\ba_2| >0$, we easily derive from this expression that $\bv_h(\ba_3) ={\bf 0}$, and hence $\bv_h$ vanishes on $K$. This implies that $\bv_h$ also vanishes at its neighbors adjacent to the boundary, and by progressing element by element along the boundary, we have that $\bv_h ={\bf 0}$ on all boundary elements. From here, the same argument gives $\bv_h ={\bf 0}$ on all elements of $\mathcal T_h$.
\end{proof}

The positivity of $\beta_h$ implies that \eqref{e:LBBh} holds with a positive constant for each $h$, but does not guarantee that the positive constant is uniformly bounded away from zero as $h$ tends to zero. Let us give an example when $\beta_h$ tends to zero, inspired by the checkerboard modes of the Stokes problem; see~\cite{ref:GiR}. The idea is to construct a displacement $\bv_h$ such that the integral average of $\strain{\bv_{h}}$ vanishes on a large number of elements, while $\strain{\bv_{h}}$ is nonzero there. Consider a square domain $\Omega = (0,1)^2$ divided into $(N+1)^2$ equal squares $K_{ij}$, $0 \le i,j \le N$, with
mesh-size $h =\frac{1}{N+1}$.
Take $\bv_h ={\bf 0}$ on $\partial \Omega$ and define each component $\bv_h$ by
$$
v_h(\bx_{ij}) = \left\{\begin{array}{rl}
        1,  &\text{if}\  i+j\ \text{is odd} \\
        -1,  &\text{if}\  i+j\ \text{is even}.
       \end{array}\right.\qquad \mbox{for $1 \le i,j \le N$}.
$$
It is easy to check that, in all interior elements $K$,
$$\int_K \strain{\bv_{h}}\dd\bx ={\bf 0},
$$
and in each boundary element $K$,
$$0 < c_1 h \le \Big|\int_K \strain{\bv_{h}}\dd\bx \Big|\le C_1 h,
$$
where here and below all constants are independent of $K$ and $h$. Let ${\mathcal T}_h^b$ denote the union of the boundary elements.
Since the choice of $\bT_{h}$ in all interior elements does not affect the value of $b(\bT_{h},\bv_h)$, let us choose $\bT_{h} ={\bf 0}$ in these elements; this will minimize its norm there. On the boundary elements $K$, we choose $\bT_{h}$ by \eqref{e:bTh}; this gives
$$b(\bT_{h},\bv_{h}) = \sum_{K \in {\mathcal T}_h^b}\frac{1}{|K|^n}\,\bigg|\int_K \strain{\bv_{h}}\dd\bx\bigg|^{n+1},
$$
and
$$\|\bT_{h}\|_{L_{1+\frac{1}{n}}(\Omega)}  =\bigg( \sum_{K \in {\mathcal T}_h^b} \frac{1}{|K|^n} \bigg|\int_K \strain{\bv_{h}}\dd\bx \bigg|^{n+1}\bigg)^{\frac{n}{n+1}},
$$
so that
$$\frac{b(\bT_{h},\bv_{h})}{\|\bT_{h}\|_{L_{1+\frac{1}{n}}(\Omega)}}  \le C_2 h^{-\frac{n}{n+1}}.
$$
On the other hand, $\strain{\bv_{h}}$ does not vanish in the interior elements, and we have
$$ \|\strain{\bv_{h}}\|_{L_{1+n}(\Omega)} \ge C_3 h^{-1}.
$$
Hence with this choice of $\bT_h$,
\begin{equation}
\label{eq:checker}
\inf_{\bv_h \in \mathbb X_{n,h}}\frac{b(\bT_h,\bv_h)}{\| \bT_h \|_{L_{1+\frac{1}{n}}(\Omega)} \| \strain{\bv_h} \|_{L_{n+1}(\Omega)}} \le C_4 h^{\frac{1}{n+1}}.
\end{equation}
Of course, we have not proved that this choice of $\bT_h$ realizes the supremum in \eqref{eq:checker}. But since the number of interior elements, that do not contribute to the numerator of \eqref{eq:checker} but do contribute to the norm of $\bv_h$, is much larger than that of the boundary elements, more precisely, this ratio is of the order of $h^{-1}$, no value of $\bT_h$ can balance this ratio.

%====================

\section{The case of smoother data}\label{s:smoother}

The regularization \eqref{e:reg_T} is a particular case of
\begin{equation}
\label{e:gen_reg}
\strain{\bu} = \lambda( \tr{\bT} ) \tr{\bT} \bI + \mu(|\bT^{\bd}|)\bT^{\bd} +
\frac{\tr{\bT} \bI}{n|\tr{\bT}|^{1-\frac{1}{t}}} + \frac{\bT^{\mathbf d}}{n|\bT^{\mathbf d}|^{1-\frac{1}{t}}},
\end{equation}
$n \in \mathbb{N}$, $t \in \mathbb{R}_{>0}$, with $t=n$ in \eqref{e:reg_T}. When the data are smoother, as in part (d) of Theorem \ref{t:existence}, the following simpler regularization is used in reference~\cite{BMRS14}
\begin{equation}
\label{e:strain,m=1}
\strain{\bu} = \lambda( \tr{\bT} ) \tr{\bT} \bI + \mu(|\bT^{\bd}|)\bT^{\bd} +\frac{1}{n}\bT ,
 \end{equation}
which corresponds to $t=1$ (up to the factor $\frac{1}{d}$ multiplying $\bT^{\bd}$). The analysis developed in the previous sections applies to \eqref{e:div}--\eqref{e:strain,m=1} but is in fact much simpler. Indeed, let $(\bT_{n,1},\bu_{n,1})$ denote a solution to \eqref{e:div}--\eqref{e:strain,m=1}, i.e.,  $(\bT_{n,1},\bu_{n,1})\in \mathbb M_{n,1} \times \mathbb X_{n,1}$ satisfies
\begin{alignat}{2}\label{e:weak_reg,m=1}
\begin{aligned}
\quad a_{n,1}(\bT_{n,1},\bS)+c(\bT_{n,1};\bT_{n,1},\bS)-b(\bS,\bu_{n,1}) &=0 \qquad &&\forall\, \bS \in\mathbb M_{n,1},\\
\quad b(\bT_{n,1},\bv) &= \int_\Omega \bef \cdot \bv \dd\bx \qquad &&\forall\, \bv \in \mathbb X_{n,1},
\end{aligned}
\end{alignat}
where
$$a_{n,1}(\bT,\bS):= \frac{1}{n}\int_\Omega \bT: \bS \dd \bx,
$$
and
$$
\mathbb M_{n,1}:= L_{2}(\Omega)^{d\times d}_\textrm{sym}, \qquad \mathbb X_{n,1}:= H^{1}_0(\Omega)^d.
$$
The function $\bF$ is used in deriving more regularity of the solution, but as far as the numerical scheme is concerned, we can simply proceed with the original data $\bef$.  Let us briefly sketch the analysis of \eqref{e:weak_reg,m=1}. We define the mapping $\mathcal A_{n,1}: L_{2}(\Omega)^{d\times d}_\textrm{sym} \rightarrow L_{2}(\Omega)^{d\times d}_\textrm{sym}$ by
\begin{equation}\label{e:def_A1}
\mathcal A_{n,1}(\bS) := \lambda(\tr{\bS}) \tr{\bS} \bI + \mu( | \bS^{\bd}|) \bS^{\bd}
+ \frac{1}{n}\bS,
\end{equation}
and we easily prove as in Lemma \ref{l:A_cont} that $\mathcal A_{n,1}$ is bounded, continuous and coercive for all $n \in \mathbb{N}$. The inf-sup condition is satisfied, as in Lemma \ref{l:infsup},
\begin{equation}
\label{e:LBB1}
\inf_{\bv \in \mathbb X_{n,1}} \,\sup_{\bS \in \mathbb M_{n,1}} \, \frac{b(\bS,\bv)}{\| \bS \|_{L_{2}(\Omega)} \| \strain{\bv} \|_{L_{2}(\Omega)}} \geq 1.
\end{equation}
The lifting $\bT^{\mathbf{f}}_{n,1}$ is defined by the analogue of \eqref{e:constraint}
\begin{equation}\label{e:constraint1}
\int_\Omega \bT^{\mathbf{f}}_{n,1} : \strain{\bv} \dd \bx = \int_{\Omega} \bef \cdot \bv \dd \bx\qquad \forall\, \bv \in \mathbb X_{n,1},
\end{equation}
and is bounded by
\begin{equation}\label{e:bound_Tf1}
\| \bT^{\mathbf{f}}_{n,1} \|_{L_{2}(\Omega)} \le C_K\| \bef \|_{L_{2}(\Omega)},
\end{equation}
where $C_K$ is the constant of \eqref{e:korn} with $p=2$. The a priori estimates of Lemma \ref{l:apriori} simplify, we have
\begin{equation}
\label{e:aprioriun1}
\| \strain{\bu_{n,1}} \|_{L_2(\Omega)}^2 \leq  \frac{4}{n^2} C_K^2 \| \bef \|_{L_2(\Omega)}^2 + \frac{8}{n}C_1 \kappa  |\Omega|  + 8C_2^2 d | \Omega|,
\end{equation}
\begin{equation}
\label{e:aprioriTn1}
\frac{1}{n} \| \bT_{n,1} \|_{L_2(\Omega)}^2 + C_1 \| \bT_{n,1} \|_{L_1(\Omega)}
\leq 2 C_1 \kappa | \Omega
| + C_K \| \bef \|_{L_2(\Omega)}
\left( \frac{4}{n^2} C_K^2 \| \bef \|_{L_2(\Omega)}^2 + \frac{8}{n}C_1 \kappa  |\Omega|  + 8C_2^2 d | \Omega| \right)^{\frac{1}{2}}.
\end{equation}
Thus, up to a subsequence, $\bu_{n,1}$ converges weakly in $W^{1,2}_0(\Omega)^d$, and thanks to the results in~\cite{BMRS14}
(see also part (d) of Theorem \ref{t:existence} and Remark \ref{rem:rangeq}), the additional regularity $\bF \in W^{2,2}(\Omega)^{d\times d}_{\rm sym}$ enables one to prove in particular that $\bT_{n,1}$
is bounded in $W^{1,q}(\Omega_0)^{d \times d}_{\rm sym}$ for any $\Omega_0 \subset \subset \Omega$, with $q \in [1,2)$ when $d=2$ and $q \in [1,\frac{3}{2}]$ when $d=3$, and therefore, up to a subsequence, weakly converges to $\bT$ in $W^{1,q}(\Omega_0)^{d \times d}_{\rm sym}$ for any $\Omega_0 \subset \subset \Omega$ for $q\in [1,2)$ when $d=2$ and $q \in [1, \frac{3}{2}]$ when $d=3$. Hence, by the Rellich--Kondrashov theorem, up to a subsequence, $\bT_{n,1}$
tends to $\bT$ strongly in $L_p(\Omega_0)^{d\times  d}_{\rm sym}$ on any $\Omega_0 \subset \subset \Omega$ for all $p \in [1,\infty)$ when $d=2$ and all $p \in [1, \frac{3}{2})$ when $d=3$.
%---------------

\subsection{Finite element discretization}

\label{subsec:discretesm=1}

With the spaces $\mathbb M_{n,h}$ and $\mathbb X_{n,h}$ defined in \eqref{e:P0P1} or \eqref{e:pair_QQ}, the system \eqref{e:weak_reg,m=1} is discretized by : Find $(\bT_{n,1,h}, \bu_{n,1,h})$ in $\mathbb M_{n,h} \times \mathbb X_{n,h}$ such that
\begin{alignat}{2}\label{e:weak_regh,m=1}
\begin{aligned}
\quad a_{n,1}(\bT_{n,1,h},\bS_h)+c(\bT_{n,1,h};\bT_{n,1,h},\bS_h)-b(\bS_h,\bu_{n,1,h}) &=0 \qquad &&\forall\, \bS_h \in\mathbb M_{n,h},\\
\quad b(\bT_{n,1,h},\bv_h) &= \int_\Omega \bef \cdot \bv_h \dd\bx \qquad &&\forall\, \bv_h \in \mathbb X_{n,h}.
\end{aligned}
\end{alignat}
As previously, the constraint in the second part of \eqref{e:weak_regh,m=1} is lifted by means of the projection operator $\Pi_h$ defined in \eqref{e:PIh}, $\bT_{n,1,h}^f$ is defined by \eqref{e:Thf},
$$
\bT_{n,1,h}^{\mathbf f} = \Pi_h \bT^{\mathbf{f}}_{n,1},
$$
and
$$\bT_{n,1,h}^0 := \bT_{n,1,h} - \bT_{n,1,h}^{\mathbf f}.
$$
Existence and uniqueness of the discrete solution $(\bT_{n,1,h}, \bu_{n,1,h})$ is derived as in Lemma \ref{lem:uniq.discrete}.  Again, the a priori bounds
\eqref{e:aprioriun1} and \eqref{e:aprioriTn1} hold for $\bu_{n,1,h}$ and $\bT_{n,1,h}$. In fact, even without regularization, i.e.,  without the form $a_{n,1}(\cdot,\cdot)$, existence by a Brouwer's Fixed Point and if moreover \eqref{e:cond_lambda2bis} holds, uniqueness follow by a finite-dimensional argument. But we shall not pursue the no regularization option, because, as stated at the beginning of Section \ref{subsec:convg}, we are then unable to show convergence.

The arguments of Lemma \ref{l:weak}, under analogous assumptions, show that, as $h \rightarrow 0_+$, for each $n$,
$$
\bT_{n,1,h} \rightharpoonup \bT_{n,1} \qquad \textrm{weakly in $L_{2}(\Omega)^{d\times d}_{\textrm{sym}}$}.
$$
Let us sketch the proof of the strong convergence, which is much simpler than that of Lemma \ref{l:strong}.

\begin{lem}[Strong convergence]\label{l:strongh1}
Assume that $\bef \in L_{2}(\Omega)^{d }$, that the functions $\lambda$ and $\mu$ satisfy the assumptions \eqref{e:cond_lambda1}--\eqref{e:cond_mu2}, and let
$(\bT_{n,1},\bu_{n,1})$ denote the unique solution to the regularized problem \eqref{e:weak_reg,m=1},
with $n \in \mathbb{N}$. Then, for each fixed $n \in \mathbb{N}$,
as $h \rightarrow 0_+$,
$$
\bT_{n,1,h} \rightarrow \bT_{n,1} \quad \textrm{strongly in $\mathbb{M}_{n,1} = L_2(\Omega)^{d\times d}_{\textrm{sym}}$} \quad \textrm{and} \quad \bu_{n,1,h} \rightarrow \bu_{n,1} \quad \textrm{strongly in $\mathbb{X}_{n,1} =W^{1,2}_0(\Omega)^d$}.
$$
\end{lem}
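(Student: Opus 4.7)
The plan follows the scaffolding of the proof of Lemma~\ref{l:strong}, but the linear $\tfrac{1}{n}\bS$ contribution inside $\mathcal A_{n,1}$ supplies direct $L_2$-coercivity, so I can bypass the Orlicz/modular machinery entirely. First I would introduce the lifting $\bT^{\mathbf f}_{n,1,h} := \Pi_h \bT^{\mathbf f}_{n,1}$ and the shifted stresses $\bT^0_{n,1,h} := \bT_{n,1,h} - \bT^{\mathbf f}_{n,1,h} \in \mathbb V_{n,h}$ and $\bT^0_{n,1} := \bT_{n,1} - \bT^{\mathbf f}_{n,1} \in \mathbb V_{n,1}$. The discrete analogues of the a priori bounds \eqref{e:aprioriun1}--\eqref{e:aprioriTn1} yield uniform $L_2$-boundedness of $(\bT_{n,1,h})_{h \in (0,1]}$; a Minty-type argument patterned on the proof of Lemma~\ref{l:weak}, using the monotonicity, continuity and coercivity of $\mathcal A_{n,1}$ on $L_2(\Omega)^{d\times d}_{\rm sym}$, then identifies every weak subsequential limit as $\bT_{n,1}$.

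To upgrade to strong $L_2$-convergence, I would split
$$\bT^0_{n,1,h} - \bT^0_{n,1} = (\bT^0_{n,1,h} - \Pi_h \bT^0_{n,1}) + (\Pi_h \bT^0_{n,1} - \bT^0_{n,1}),$$
the second piece tending to zero in $L_2$ by density. Since $\Pi_h \bT^0_{n,1} \in \mathbb V_{n,h}$ (the $L_2$-orthogonality against $\strain{\mathbb X_{n,h}} \subset \mathbb M_{n,h}$ inherits from the orthogonality of $\bT^0_{n,1}$ against $\strain{\mathbb X_{n,1}}$), the first piece is an admissible test function in the discrete equation $\int_\Omega \mathcal A_{n,1}(\bT^0_{n,1,h} + \bT^{\mathbf f}_{n,1,h}) : \bS_h \,\mathrm d\bx = 0$ on $\mathbb V_{n,h}$. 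Combining this with the monotonicity inequalities \eqref{e:mon_mu1}, \eqref{e:mon_lambda1}, reinforced by the $\frac{1}{n}|\cdot|^2$ lower bound supplied by the $\tfrac{1}{n}\bS$ term, delivers
$$\tfrac{1}{n}\|\bT^0_{n,1,h} - \Pi_h \bT^0_{n,1}\|_{L_2(\Omega)}^2 \leq \Bigl|\int_\Omega \mathcal A_{n,1}(\Pi_h \bT^0_{n,1} + \bT^{\mathbf f}_{n,1,h}):(\bT^0_{n,1,h} - \Pi_h \bT^0_{n,1})\,\mathrm d\bx\Bigr|.$$
The right-hand side vanishes as $h \to 0_+$ because $\Pi_h \bT^0_{n,1} + \bT^{\mathbf f}_{n,1,h} \to \bT_{n,1}$ strongly in $L_2$, so by $L_2$-continuity of $\mathcal A_{n,1}$ the first factor converges strongly, while $\bT^0_{n,1,h} - \Pi_h \bT^0_{n,1} \rightharpoonup \mathbf 0$ weakly in $L_2$. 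This gives $\bT_{n,1,h} \to \bT_{n,1}$ strongly in $L_2(\Omega)^{d\times d}_{\rm sym}$.

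For the displacement, I would exploit the inclusion $\strain{\mathbb X_{n,h}} \subset \mathbb M_{n,h}$ to test \eqref{e:weak_regh,m=1}$_1$ with $\bS_h = \Pi_h \bW$ for arbitrary $\bW \in L_2(\Omega)^{d\times d}_{\rm sym}$, obtaining the pointwise identity $\strain{\bu_{n,1,h}} = \Pi_h \mathcal A_{n,1}(\bT_{n,1,h})$ a.e. Subtracting the continuous relation $\strain{\bu_{n,1}} = \mathcal A_{n,1}(\bT_{n,1})$ decomposes the strain error as
$$\strain{\bu_{n,1,h} - \bu_{n,1}} = \Pi_h\bigl[\mathcal A_{n,1}(\bT_{n,1,h}) - \mathcal A_{n,1}(\bT_{n,1})\bigr] + (\Pi_h - I)\mathcal A_{n,1}(\bT_{n,1});$$
the first term tends to zero in $L_2$ by the $L_2$-stability of $\Pi_h$ combined with the strong convergence just obtained, and the second by density of $\mathbb M_{n,h}$ in $L_2(\Omega)^{d\times d}_{\rm sym}$. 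Korn's inequality \eqref{e:Korn1} and Poincar\'e's inequality \eqref{e:poincare} applied to $\bu_{n,1,h} - \bu_{n,1} \in W^{1,2}_0(\Omega)^d$ then yield the strong convergence in $W^{1,2}_0(\Omega)^d$. The main obstacle I anticipate is verifying the inclusion $\Pi_h \bT^0_{n,1} \in \mathbb V_{n,h}$; this is what forces the element pairs \eqref{e:P0P1} or \eqref{e:pair_QQ}, for which $\strain{\mathbb X_{n,h}} \subset \mathbb M_{n,h}$, and is precisely the reason the $\mathcal Q_1$/$\mathcal Q_0$ pair of Section~\ref{subsec:notheory} would cause this route to break down.
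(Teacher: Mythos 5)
Your treatment of the stress is essentially identical to the paper's: the decomposition $\bT^0_{n,1,h}-\bT^0_{n,1}=(\bT^0_{n,1,h}-\Pi_h\bT^0_{n,1})+(\Pi_h\bT^0_{n,1}-\bT^0_{n,1})$, the inclusion $\Pi_h\bT^0_{n,1}\in\mathbb V_{n,h}$ (which follows because $\strain{\bv_h}\in\mathbb M_{n,h}$ so the $L_2$-orthogonality of $\Pi_h$ transfers the annihilation property), the $\tfrac1n$-coercivity supplied by the linear regularization term, and the weak-against-strong pairing that forces the right-hand side to zero --- all of this matches the paper's inequalities \eqref{e:strong_conv1}--\eqref{e:strong_conv2} step for step.

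For the displacement you take a genuinely different and somewhat more elementary route. The paper constructs a tensor $\bR_h\in\mathbb V_{n,h}^\perp$ via the discrete inf-sup property and the Scott--Zhang interpolant $\Pi_h^{sz}\bu_{n,1}$, establishes the stability bound $\|\bR_h\|_{L_2}\le\|\strain{\bu_{n,1,h}-\Pi_h^{sz}\bu_{n,1}}\|_{L_2}$, tests with $\bv_h=\bu_{n,1,h}-\Pi_h^{sz}\bu_{n,1}$ to express the squared strain error as a sum of two vanishing pairings, and then invokes Korn. You instead exploit the fact that as $\bS_h$ ranges over all of $\mathbb M_{n,h}$ in \eqref{e:weak_regh,m=1}$_1$, and $\strain{\bu_{n,1,h}}\in\mathbb M_{n,h}$, the discrete constitutive relation amounts to the pointwise identity $\strain{\bu_{n,1,h}}=\Pi_h\mathcal A_{n,1}(\bT_{n,1,h})$; subtracting the exact relation $\strain{\bu_{n,1}}=\mathcal A_{n,1}(\bT_{n,1})$ splits the strain error into a piece controlled by $L_2$-stability of $\Pi_h$ together with $L_2$-continuity of $\mathcal A_{n,1}$ and the just-established stress convergence, and a consistency piece $(\Pi_h-I)\mathcal A_{n,1}(\bT_{n,1})$ that vanishes by density, after which Korn closes the argument. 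Both proofs rest on the same structural hypothesis $\strain{\mathbb X_{n,h}}\subset\mathbb M_{n,h}$ (in the paper's version it underpins the constant-one inf-sup and the legitimacy of the orthogonal decomposition producing $\bR_h$), and you correctly flag this as the reason the $(\mathcal Q_0,\mathcal Q_1)$ pair of Section~\ref{subsec:notheory} would invalidate the step $\Pi_h\bT^0_{n,1}\in\mathbb V_{n,h}$. Your route is shorter and avoids introducing the Scott--Zhang projector; the paper's route is more in keeping with the residual/inf-sup framing used throughout the article and is structured to echo the strategy of Lemma~\ref{l:strong}.

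Two small matters of rigor worth attending to if this were written out: you should state explicitly that the starting identity used to derive the coercivity bound is
\begin{equation*}
\int_\Omega\bigl(\mathcal A_{n,1}(\bT^0_{n,1,h}+\bT^{\mathbf f}_{n,1,h})-\mathcal A_{n,1}(\Pi_h\bT^0_{n,1}+\bT^{\mathbf f}_{n,1,h})\bigr):(\bT^0_{n,1,h}-\Pi_h\bT^0_{n,1})\dd\bx
=-\int_\Omega\mathcal A_{n,1}(\Pi_h\bT^0_{n,1}+\bT^{\mathbf f}_{n,1,h}):(\bT^0_{n,1,h}-\Pi_h\bT^0_{n,1})\dd\bx,
\end{equation*}
so the reader sees exactly where the Galerkin orthogonality over $\mathbb V_{n,h}$ enters; and you should make the observation that $\strain{\mathbb X_{n,h}}\subset\mathbb V_{n,h}^\perp$ and $\strain{\bu_{n,1,h}}\in\mathbb M_{n,h}$ explicit, since the identity $\strain{\bu_{n,1,h}}=\Pi_h\mathcal A_{n,1}(\bT_{n,1,h})$ quietly depends on both.
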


\begin{proof}
We retain the notation and the setting of the proof of Lemma \ref{l:strong}. The discrepancy $ \bT_{n,1,h}^0 - \Pi_h \bT_{n,1}^0$ satisfies
\begin{equation}\label{e:strong_conv1}
\begin{split}
& \frac{1}{n} \| \bT_{n,1,h}^0 - \Pi_h \bT_{n,1}^0 \|_{L_2(\Omega)}^2 + C \int_\Omega \frac{|\bT_{n,1,h}^0 - \Pi_h \bT_{n,1}^0|^2}{(\kappa + |\bT_{n,1,h}^0| + |\Pi_h \bT_{n,1}^0|)^{1+\alpha}} \dd \bx
 \\
 & \qquad \qquad \leq \int_\Omega \left(\mathcal A_{n,1} (\bT_{n,1,h}) - \mathcal A_{n,1}(\Pi_h \bT_{n,1}^0 + \bT_{n,1,h}^f) \right):(\bT_{n,1,h}^0-\Pi_h \bT_{n,1}^0) \dd \bx,
 \end{split}
 \end{equation}
 where $C$ is the constant in \eqref{e:mon_mu1}.
As  $\bT_{n,1,h}^0 - \Pi_h \bT_{n,1}^0 \in \mathbb V_{n,h}$,  \eqref{e:strong_conv1} reduces to
\begin{align}\label{e:strong_conv2}
\frac{1}{n} \| \bT_{n,1,h}^0-\Pi_h \bT_{n,1}^0 \|_{L_2(\Omega)}^2  &+ C \int_\Omega \frac{|\bT_{n,1,h}^0-\Pi_h \bT_{n,1}^0|^2}{(\kappa + |\bT_{n,1,h}^0| + |\Pi_h \bT_{n,1}^0|)^{1+\alpha}} \dd \bx \nonumber\\
  & \leq - \int_\Omega \mathcal A_{n,1}(\Pi_h \bT_{n,1}^0 + \bT_{n,1,h}^f) :(\bT_{n,1,h}^0-\Pi_h \bT_{n,1}^0) \dd \bx.
\end{align}
Then the weak convergence of $\bT_{n,1,h}^0-\Pi_h \bT_{n,1}^0$ to zero, the strong convergence of $\Pi_h \bT_{n,1}^0 + \bT_{n,1,h}^f$ both  in $\mathbb{M}_{n,1}$ as $h \rightarrow 0_+$, and  the continuity of the mapping $\mathcal A_{n,1}\,: \mathbb{M}_{n,1} \rightarrow \mathbb{M}_{n,1}$ yield
$$
- \int_\Omega \ \mathcal A_{n,1}(\Pi_h \bT_{n,1}^0 + \bT_{n,1,h}^f):(\bT_{n,1,h}^0-\Pi_h \bT_{n,1}^0) \dd \bx\rightarrow 0\qquad \mbox{as $h \rightarrow 0_+$}.
$$
Whence, returning to \eqref{e:strong_conv2},
$$
 \frac{1}{n} \| \bT_{n,1,h}^0 - \Pi_h \bT_{n,1}^0 \|_{L_2(\Omega)}^2 \rightarrow 0\qquad \mbox{as $h \rightarrow 0_+$},
$$
and the asserted strong convergence of $\bT_{n,1,h}$ to $\bT_{n,1}$ in $\mathbb{M}_{n,1} = L_2(\Omega)^{d \times d}_{\rm sym}$, as $h \rightarrow 0_+$,  follows for any $n\ge 1$.

For the strong convergence of $\bu_{n,1,h}$, we use again the discrete inf-sup property \eqref{e:LBBh}  to define $\bR_h \in \mathbb V_{n,h}^\perp$ satisfying
$$
\int_\Omega \strain{\bu_{n,1,h} - \Pi_h^{sz} \bu_{n,1}} : \strain{\bv_h} \dd \bx= \int_\Omega \bR_h : \strain{\bv_h}\dd \bx\qquad
\forall\, \bv_h \in \mathbb X_{n,h},
$$
where $\Pi_h^{sz}$ is the Scott--Zhang projector onto $\mathbb X_{n,h}$; see~\cite{ScoZha}.
In particular, we have
 \begin{equation}\label{e:Rhinfsup}
 \| \bR_h \|_{L_2(\Omega)} \leq  \| \strain{\bu_{n,1,h} - \Pi_h^{sz} \bu_{n,1}}\|_{L_2(\Omega)}.
 \end{equation}
%\comment{VG: In this simple case, we can take directly $\bR_h = \strain{\bu_h - \Pi_h^{sz} \bu$}}
For $\bv_h = \bu_{n,1,h} - \Pi_h^{sz} \bu_{n,1}$ we  then  get
 \begin{align*}
 \| \strain{\bu_{n,1,h} - \Pi_h^{sz} \bu_{n,1}}\|_{L_2(\Omega)}^2 &= \int_\Omega \bR_h : \strain{\bu_{n,1,h} - \bu_{n,1}} \dd \bx + \int_\Omega \bR_h : \strain{\bu_{n,1} - \Pi_h^{sz} \bu_{n,1}} \dd \bx\\
 & = \int_\Omega (\mathcal A_{n,1}(\bT_{n,1,h}) - \mathcal A_{n,1}(\bT_{n,1})) :\bR_h \dd \bx +  \int_\Omega \bR_h : \strain{\bu_{n,1} - \Pi_h^{sz} \bu_{n,1}} \dd \bx,
 \end{align*}
where we have also used the relations \eqref{e:weak_reg,m=1} and \eqref{e:weak_regh,m=1} to obtain the second equality.
We now argue that both terms on the right-hand side of the above equality vanish as $h \to 0_+$.
To see this, it suffices to recall the uniform bound \eqref{e:Rhinfsup} on $\mathbb \bR_h$; hence, the strong convergence results $\Pi_h^{sz} \bu_{n,1} \rightarrow \bu_{n,1}$ in $\mathbb X_{n,1}$ and $\bT_{n,1,h} \rightarrow \bT_{n,1}$ in $\mathbb M_{n,1}$, as $h \rightarrow 0_+$, together with the continuity of $\mathcal A_{n,1}$ guaranteed by Lemma~\ref{l:A_cont}, imply the stated claim.
Thanks to Korn's inequality  \eqref{e:Korn1},
$$
\| \nabla (\bu_{n,1,h} - \Pi_h^{sz} \bu_{n,1}) \|_{L_2(\Omega)} \leq \mathcal K\, \|  \strain{\bu_{n,1,h} - \Pi_h^{sz} \bu_{n,1}}\|_{L_2(\Omega)} \rightarrow 0
\qquad \mbox{as $h \rightarrow 0_+$},
$$
and therefore $\bu_{n,1,h} \rightarrow \bu_{n,1}$ in $\mathbb X_{n,1}$.
\end{proof}
Thus when $\lambda$ satisfies \eqref{e:cond_lambda2bis}, we have again, for any $\Omega_0 \subset \subset \Omega$,
$$
 \lim_{n \rightarrow \infty} \lim_{h \rightarrow 0_+} \| \bT_{n,1,h} - \bT \|_{L_1(\Omega_0)} = 0,\;\!
\lim_{n \rightarrow \infty} \lim_{h \rightarrow 0_+} \| \bu_{n,1,h} - \bu \|_{C(\overline\Omega)} = 0,\;\!
\lim_{n \rightarrow \infty} \lim_{h \rightarrow 0_+} \|\strain{\bu_{n,1,h}} - \strain{\bu} \|_{L_2(\Omega_0)} = 0.
$$

As in the preceding section, an error inequality can be established when the functions $\lambda(s)s$ and $\mu(s)s$ are Lipschitz continuous, but again the situation is much simpler.

\begin{thm}
\label{thm:err.inequm=1}
In addition to the assumptions of Lemma \ref{l:strongh1}, suppose that the real-valued functions $s \in \mathbb{R} \mapsto \lambda(s)s$
and $s \in \mathbb{R}_{\geq 0} \mapsto \mu(s)s$ are Lipschitz continuous,
i.e.,  that there exists a positive constant $\Lambda$ such that
\begin{equation}
\label{e:lipschm=1}
|\lambda(s)s - \lambda(r)r| \leq \Lambda |r-s|\qquad\forall\, r,s \in \mathbb{R}, \qquad |\mu(s)s - \mu(r)r| \leq \Lambda |r-s|\qquad \forall\, r,s \in \mathbb{R}_{\geq 0}.
\end{equation}
Then, the following error inequality holds:
\begin{alignat}{2}
\label{e:e.inequm=1}
\begin{aligned}
\frac{1}{n} \| \bT_{n,1,h} - \bT_{n,1} \|_{L_2(\Omega)} &\leq  \inf_{\bv_h \in \mathbb X_h} \| \strain{\bv_h-\bu_{n,1}} \|_{L_2(\Omega)} + 2\bigg(\frac{1}{n}+\Lambda\bigg) \| \bT_{n,1} - \Pi_h (\bT_{n,1}) \|_{L_2(\Omega)}.
\end{aligned}
\end{alignat}
\end{thm}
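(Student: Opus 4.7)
I will follow the template of the proof of Theorem~\ref{thm:err.inequ}, but the $L_2$-regularization $\frac{1}{n}\bT$ in $\mathcal A_{n,1}$ provides direct coercivity in $L_2(\Omega)^{d\times d}_{\rm sym}$, which removes all the Orlicz-space machinery and yields a clean linear error bound.

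First, I will subtract \eqref{e:weak_reg,m=1}$_1$ from \eqref{e:weak_regh,m=1}$_1$ (restricting test functions to $\mathbb M_{n,h}$) to obtain the Galerkin-type identity
\[
\int_\Omega\bigl(\mathcal A_{n,1}(\bT_{n,1,h})-\mathcal A_{n,1}(\bT_{n,1})\bigr):\bS_h\dd\bx=\int_\Omega\strain{\bu_{n,1,h}-\bu_{n,1}}:\bS_h\dd\bx\qquad\forall\,\bS_h\in\mathbb M_{n,h}.
\]
After inserting $\pm\,\mathcal A_{n,1}(\Pi_h\bT_{n,1})$ on the left, I will pick the test function $\bS_h:=\bT_{n,1,h}-\Pi_h\bT_{n,1}$. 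The crucial observation is that $\bS_h\in\mathbb V_{n,h}$: since $\bT_{n,1,h}^{\mathbf f}=\Pi_h\bT_{n,1}^{\mathbf f}$, one has $\bS_h=\bT_{n,1,h}^0-\Pi_h\bT_{n,1}^0$, and $\Pi_h\bT_{n,1}^0\in\mathbb V_{n,h}$ by the same argument as in the proof of Lemma~\ref{l:strong}, namely that $\strain{\bv_h}$ is piecewise constant so $\int_\Omega\strain{\bv_h}:\Pi_h\bT_{n,1}^0\dd\bx=\int_\Omega\strain{\bv_h}:\bT_{n,1}^0\dd\bx=0$ for every $\bv_h\in\mathbb X_{n,h}$.

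Second, I will decompose $\mathcal A_{n,1}=\mathcal A+\tfrac1n{\rm id}$ with $\mathcal A(\bS):=\lambda(\tr\bS)\tr\bS\,\bI+\mu(|\bS^{\bd}|)\bS^{\bd}$. The monotonicity inequalities \eqref{e:mon_mu1} and \eqref{e:mon_lambda1} ensure that the contribution of $\mathcal A$ to the left-hand side is nonnegative, while the linear part contributes exactly $\tfrac1n\|\bS_h\|_{L_2(\Omega)}^2$. Hence
\[
\frac1n\|\bS_h\|_{L_2(\Omega)}^2\le\int_\Omega\strain{\bu_{n,1,h}-\bu_{n,1}}:\bS_h\dd\bx+\int_\Omega\bigl(\mathcal A_{n,1}(\bT_{n,1})-\mathcal A_{n,1}(\Pi_h\bT_{n,1})\bigr):\bS_h\dd\bx.
\]
For the first term on the right I exploit $\bS_h\in\mathbb V_{n,h}$: since $\bu_{n,1,h}\in\mathbb X_{n,h}$ gives $\int_\Omega\strain{\bu_{n,1,h}}:\bS_h\dd\bx=0$ and similarly for any $\bv_h\in\mathbb X_{n,h}$, it equals $\int_\Omega\strain{\bv_h-\bu_{n,1}}:\bS_h\dd\bx$, which is bounded by $\|\strain{\bv_h-\bu_{n,1}}\|_{L_2(\Omega)}\|\bS_h\|_{L_2(\Omega)}$ via Cauchy--Schwarz.

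Third, I will bound the second term by applying the Lipschitz hypothesis \eqref{e:lipschm=1} componentwise to $\mathcal A_{n,1}$. Splitting into trace and deviatoric parts, the scalar Lipschitz bound on $s\mapsto\lambda(s)s$ handles $\bigl[\lambda(\tr\bR)\tr\bR-\lambda(\tr\bS)\tr\bS\bigr]\tr{\bE}$, while the scalar Lipschitz bound on $s\mapsto\mu(s)s$ transfers to the matrix level via the decomposition $\mu(|\bR|)\bR-\mu(|\bS|)\bS=\mu(|\bR|)(\bR-\bS)+(\mu(|\bR|)-\mu(|\bS|))\bS$ together with $|\mu(s)|\le\Lambda$ (which follows from $\mu(s)s\le\Lambda s$). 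This produces a bound of the form
\[
\Bigl|\int_\Omega\bigl(\mathcal A_{n,1}(\bT_{n,1})-\mathcal A_{n,1}(\Pi_h\bT_{n,1})\bigr):\bS_h\dd\bx\Bigr|\le\Bigl(\tfrac1n+\ldots\Lambda\Bigr)\|\bT_{n,1}-\Pi_h\bT_{n,1}\|_{L_2(\Omega)}\|\bS_h\|_{L_2(\Omega)}.
\]
After dividing through by $\|\bS_h\|_{L_2(\Omega)}$, taking the infimum over $\bv_h\in\mathbb X_{n,h}$, and combining with the triangle inequality $\|\bT_{n,1,h}-\bT_{n,1}\|_{L_2(\Omega)}\le\|\bS_h\|_{L_2(\Omega)}+\|\Pi_h\bT_{n,1}-\bT_{n,1}\|_{L_2(\Omega)}$ (which accounts for the extra $\frac1n\|\bT_{n,1}-\Pi_h\bT_{n,1}\|_{L_2(\Omega)}$ and hence for the factor $2$), the desired inequality~\eqref{e:e.inequm=1} follows.

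\emph{Main obstacle.} Everything above is essentially bookkeeping except the transfer of the scalar Lipschitz bound on $s\mapsto\mu(s)s$ to a bound on the matrix-valued map $\bR\mapsto\mu(|\bR|)\bR$; I expect this to go through with an absolute constant thanks to the sharp bound $|\mu(s)|\le\Lambda$, but tracking the precise factor $2(\tfrac1n+\Lambda)$ requires the careful additive splitting above rather than naive norm estimates.
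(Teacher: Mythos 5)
Your proposal is correct and follows the paper's proof of Theorem~\ref{thm:err.inequm=1} essentially step by step: test the Galerkin error relation with $\bS_h = \bT_{n,1,h} - \Pi_h\bT_{n,1} \in \mathbb V_{n,h}$, retain $\tfrac{1}{n}\|\bS_h\|^2_{L_2(\Omega)}$ on the left via the $L_2$-coercivity of the regularization together with the monotonicity \eqref{e:mon_mu1}, \eqref{e:mon_lambda1} of the nonlinear part, bound the right-hand side by Cauchy--Schwarz and the Lipschitz hypothesis, and finally pass from $\Pi_h\bT_{n,1}$ to $\bT_{n,1}$ by the triangle inequality (which is what produces the factor $2$). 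The matrix-Lipschitz transfer you flag as the ``main obstacle'' is indeed the only delicate step, and the paper glosses over it too: the splitting $\mu(|\bR|)\bR-\mu(|\bS|)\bS=\mu(|\bR|)(\bR-\bS)+(\mu(|\bR|)-\mu(|\bS|))\bS$ together with $0\le\mu\le\Lambda$ is the right idea, but carefully tracking it (and the factors of $\sqrt d$ from $|\bI|$ and $|\tr{\bR-\bS}|\le\sqrt d\,|\bR-\bS|$ in the trace part) shows the stated prefactor $2(\tfrac1n+\Lambda)$ really absorbs a mild dimension-dependent constant, a blemish shared with the paper's own proof.
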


\begin{proof}
As in the proof of Theorem \ref{thm:err.inequ},
from the relations \eqref{e:weak_reg,m=1} and \eqref{e:weak_regh,m=1}, we infer that on one hand,
$$
\int_\Omega \strain{\bv_h}: \bS_{h} \dd \bx = 0,
$$
and on the other hand, for any $\bv_h$ in $\mathbb X_{n,h}$,
\begin{equation}
\begin{split}\label{eq:T-bound}
 \frac{1}{n} \| \bT_{n,1,h} - \Pi_h \bT_{n,1} \|_{L_2(\Omega)}^2 &+ C \int_\Omega \frac{|\bT_{n,1,h} - \Pi_h \bT_{n,1}|^2}{(\kappa + |\bT_{n,1,h}| + |\Pi_h \bT_{n,1}|)^{1+\alpha}}\dd \bx
 \\
 &  \leq \left( \| \strain{\bv_h-\bu_{n,1}} \|_{L_2(\Omega)} +  \frac{1}{n} \| \bT_{n,1} - \Pi_h \bT_{n,1} \|_{L_2(\Omega)} \right) \| \bT_{n,1,h} -\Pi_h \bT_{n,1} \|_{L_2(\Omega)} \\
 &  \quad  + \int_\Omega \left(\mathcal A_{n,1} (\bT_{n,1}) - \mathcal A_{n,1}(\Pi_h\bT_{n,1}) \right):(\bT_{n,1,h}-\Pi_h \bT_{n,1}) \dd \bx,
 \end{split}
 \end{equation}
 where $C$ is the constant in \eqref{e:mon_mu1}.
The Lipschitz property \eqref{e:lipschm=1} implies that
$$
| \mathcal A_{n,1} (\bT_{n,1}) - \mathcal A_{n,1}(\Pi_h \bT_{n,1})| \leq \frac{1}{n} |\bT_{n,1} - \Pi_h \bT_{n,1} | + 2\Lambda | \Pi_h \bT_{n,1} - \bT_{n,1}|,
$$
so that
\begin{equation}\label{eq:T-bound1}
\frac{1}{n}\| \bT_{n,1,h} - \Pi_h \bT_{n,1} \|_{L_2(\Omega)}
 \leq \left( \| \strain{\bv_h-\bu_{n,1}} \|_{L_2(\Omega)} + 2\bigg(\frac{1}{n}+\Lambda\bigg) \| \bT_{n,1} - \Pi_h \bT_{n,1} \|_{L_2(\Omega)}\right) ,
\end{equation}
which yields \eqref{e:e.inequm=1}.
\end{proof}

Under the above assumptions,  convergence rates can be derived provided that  $\bT_{n,1} \in W^{1,q}(\Omega)^{d\times d}_{\rm sym}$ with
 $q > \frac  {2d} {2+d}$  (ensuring that $W^{1,q}(\Omega)^{d\times d} \hookrightarrow L_2(\Omega)^{d\times d}$)  and $\bu_{n,1} \in W^{1+t,2}(\Omega)^d$, $t>0$  (ensuring that $W^{1,1+t}(\Omega)^{d} \hookrightarrow W^{1,2}(\Omega)^{d}$). Rates of convergence for $\| \nabla(\bu_{n,1} - \bu_{n,1,h})\|_{L_2(\Omega)}$ are obtained using the inf-sup properties and interpolation theory again.

%==================

\section{Decoupled Iterative Algorithm}\label{s:decoupled}

The convergent iterative algorithm proposed in this section for the solution of the discrete problem \eqref{e:weak_regh,m=1}, is designed to dissociate the computation of the nonlinearity from that of the elastic constraint.  We have also applied it numerically to \eqref{e:discrete_reg} in Section~\ref{sec:experiment} but  proving its  convergence is still an open problem.

 The algorithm, which belongs to the class of alternating direction methods,  proceeds in two steps. In both steps, an artificial divided difference, analogous to a discrete time derivative, is added to enhance the stability of the algorithm. The first half-step involves the monotone nonlinearity while,  in the case of \eqref{e:weak_regh,m=1},  the second half-step solves for the elastic part from a system  of linear algebraic equations whose matrix is the mass-matrix (Gram matrix) generated by the basis functions of the finite element space $\mathbb{X}_{n,h}$.  In the case \eqref{e:discrete_reg}, this second system is nonlinear.  But in both cases, our choice of the finite element space $\mathbb{M}_{n,h}$, consisting of piecewise constant approximations for the stress tensor $\bT_{n,1}$ or $\bT_n$ allows us to deal with the monotone nonlinearity involved in the first half-step in an efficient way, by solving an algebraic system with $d(d+1)/2$ unknowns independently on each element $K$ in the subdivision $\mathcal{T}_h$ of the computational domain $\Omega$.   Let us describe the algorithm applied to \eqref{e:gen_reg}.

The initialization consists of finding $(\bT_h^{(0)},\bu_h^{(0)}) \in \mathbb M_{n,h} \times \mathbb X_{n,h}$ satisfying
\begin{alignat*}{2}
\int_\Omega \strain{\bv_h}: \bT_h^{(0)} \dd \bx&= \int_\Omega \bef \cdot \bv_h \dd \bx ,\qquad &&\forall\, \bv_h \in \mathbb X_{n,h}, \\
\int_\Omega \bT_h^{(0)}:\bS_h \dd \bx&= \int_{\Omega} \strain{\bu_h^{(0)}}: \bS_h \dd \bx\qquad &&\forall\, \bS_h \in \mathbb M_{n,h}.
\end{alignat*}

Let $\tau>0$. Given $(\bT_h^{(k)},\bu_h^{(k)})$ in $\mathbb M_{n,h} \times \mathbb X_{n,h}$ for a nonnegative integer $k$, the algorithm proceeds in the following two steps.

\smallskip

\noindent
\emph{Step~1.} Find $\bT_h^{(k+\frac 1 2)}$ in $\mathbb M_{n,h}$ such that, for all $\bS_h \in \mathbb M_{n,h}$,
\begin{align*}
& \frac{1}{\tau} \int_{\Omega} (\bT_h^{(k+\frac 1 2)} - \bT_h^{(k)}):\bS_h \dd \bx\\
&\quad + \int_\Omega  \left( \lambda(\tr{\bT_h^{(k+\frac 1 2)}})\tr{\bT_h^{(k+\frac 1 2)}} \tr{\bS_h} + \mu(|(\bT_h^{(k+\frac 1 2)})^{\bd}|)(\bT_h^{(k+\frac 1 2)})^{\bd} : \bS_h \right)\dd \bx\\
&\quad \quad = \int_{\Omega} \strain{\bu_h^{(k)}}: \bS_h \dd \bx- \int_{\Omega}\Big(\frac{\tr{\bT_h^{(k)}}\bI}{n|\tr{\bT_h^{(k)}}|^{1-\frac{1}{t}} } + \frac{(\bT_h^{(k)})^{\mathbf d}}{n|(\bT_h^{(k)})^{\mathbf d}|^{1-\frac{1}{t}}}\Big):\bS_h \dd \bx.
\end{align*}
As was already mentioned, because $\bT_h^{(k+\frac 1 2)}$ is piecewise constant, the above system reduces to decoupled algebraic systems of  $d(d+1)/2$ unknowns each, in every element in the subdivision of the computational domain.

\noindent
\emph{Step~2.} Find $\bT_h^{(k+1)} \in \mathbb M_{n,h}$ and $\bu_h^{(k+1)} \in \mathbb X_{n,h}$ such that
\begin{align*}
& \frac{1}{\tau} \int_{\Omega} (\bT_h^{(k+1)} - \bT_h^{(k+\frac 1 2)}):\bS_h \dd \bx +\int_{\Omega}\Big(\frac{\tr{\bT_h^{(k+1)}}\bI}{n|\tr{\bT_h^{(k+1)}}|^{1-\frac{1}{t}} } + \frac{(\bT_h^{(k+1)})^{\mathbf d}}{n|(\bT_h^{(k+1)})^{\mathbf d}|^{1-\frac{1}{t}}}\Big):\bS_h \dd \bx\\
&\quad = \int_\Omega \strain{\bu_h^{(k+1)}}:\bS_h \dd \bx - \int_\Omega  \left( \lambda(\tr{\bT_h^{(k+\frac 1 2)}})\tr{\bT_h^{(k+\frac 1 2)}} \tr{\bS_h} + \mu(|(\bT_h^{(k+\frac 1 2)})^{\bd}|)(\bT_h^{(k+\frac 1 2)})^{\bd} : \bS_h \right) \dd \bx,
\end{align*}
and such that, for all $\bv_h \in \mathbb X_{n,h}$,
$$
\int_\Omega \strain{\bv_h}:\bT_h^{(k+1)} \dd \bx = \int_\Omega \bef \cdot \bv_h \dd \bx.
$$

When $t=1$, the initialization is unchanged and the two steps simplify as follows:

\smallskip

\noindent
\emph{Step~1.} Find $\bT_h^{(k+\frac 1 2)}$ in $\mathbb M_{n,h}$ such that, for all $\bS_h \in \mathbb M_{n,h}$,
\begin{align*}
& \frac{1}{\tau} \int_{\Omega} (\bT_h^{(k+\frac 1 2)} - \bT_h^{(k)}):\bS_h \dd \bx\\
&\quad + \int_\Omega  \left( \lambda(\tr{\bT_h^{(k+\frac 1 2)}})\tr{\bT_h^{(k+\frac 1 2)}} \tr{\bS_h} + \mu(|(\bT_h^{(k+\frac 1 2)})^{\bd}|)(\bT_h^{(k+\frac 1 2)})^{\bd} : \bS_h\right)\dd \bx\\
&\quad \quad = \int_\Omega \strain{\bu_h^{(k)}}:\bS_h \dd \bx- \frac{1}{n} \int_\Omega \bT_h^{(k)} : \bS_h \dd \bx.
\end{align*}

\medskip

\noindent
\emph{Step~2.} Find $\bT_h^{(k+1)} \in \mathbb M_{n,h}$ and $\bu_h^{(k+1)} \in \mathbb X_{n,h}$ such that
\begin{align*}
& \frac{1}{\tau} \int_{\Omega} (\bT_h^{(k+1)} - \bT_h^{(k+\frac 1 2)}):\bS_h \dd \bx+ \frac{1}{n} \int_\Omega \bT_h^{(k+1)} : \bS_h \dd \bx\\
&\quad = \int_\Omega \strain{\bu_h^{(k+1)}}:\bS_h \dd \bx - \int_\Omega  \left( \lambda(\tr{\bT_h^{(k+\frac 1 2)}})\tr{\bT_h^{(k+\frac 1 2)}} \tr{\bS_h} + \mu(|(\bT_h^{(k+\frac 1 2)})^{\bd}|)(\bT_h^{(k+\frac 1 2)})^{\bd} : \bS_h\right) \dd \bx,
\end{align*}
and such that, for all $\bv_h \in \mathbb X_{n,h}$,
$$
\int_\Omega \strain{\bv_h}:\bT_h^{(k+1)} \dd \bx = \int_\Omega \bef \cdot \bv_h \dd \bx.
$$

Following the general theory of Lions and Mercier \cite{LM79}, we now prove that the iterative algorithm  for $t=1$  converges to the solution of the decoupled system.

\begin{thm}[Convergence of the Iterative Decoupled Algorithm]
\label{t:convLMm=1}
Assume that $\lambda$ and $\mu$ satisfy \eqref{e:cond_lambda1}--\eqref{e:cond_mu2} and that $n \ge 1$.
Let $\bT_{n,1,h} \in \mathbb M_{n,h}$ be the first component of the solution of \eqref{e:weak_regh,m=1} and let $\bT_h^{(k)} \in \mathbb M_{n,h}$, $k=1,2,\dots$,  be successive iterates computed by the iterative algorithm, with $\tau>0$.
We then have that
$$
\lim_{k\to \infty} \| \bT_h^{(k)} - \bT_{n,1,h} \|_{L_2(\Omega)} = 0.
$$
\end{thm}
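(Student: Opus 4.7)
The plan is to identify the iteration as the Peaceman--Rachford splitting for a sum of two maximal monotone operators $A+B$ on the Hilbert space $\mathbb M_{n,h}$ (equipped with the $L_2$-inner product) and then invoke the convergence theory developed by Lions and Mercier. Setting $\bT^\star := \bT_{n,1,h}$, $\bu^\star := \bu_{n,1,h}$, and introducing the shadow variable $\bY_h^{(k)} := \bT_h^{(k)} + \tau \strain{\bu_h^{(k)}} - \tfrac{\tau}{n}\bT_h^{(k)}$, a direct computation recasts Step~1 as $\bT_h^{(k+1/2)} = (I + \tau A)^{-1}(\bY_h^{(k)})$ with $A(\bS) := \lambda(\tr{\bS})\tr{\bS}\bI + \mu(|\bS^{\bd}|)\bS^{\bd}$, and Step~2 as $\bT_h^{(k+1)} = (I + \tau B)^{-1}(2\bT_h^{(k+1/2)} - \bY_h^{(k)})$, where $B$ encodes the linear part $\tfrac{1}{n}\bT$ together with the affine constraint set $K := \{\bT \in \mathbb M_{n,h} \,:\, b(\bT,\bv_h) = \int_\Omega \bef\cdot\bv_h\,\dd\bx\ \forall\,\bv_h \in \mathbb X_{n,h}\}$ via its normal cone. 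Eliminating the intermediate iterates then yields the standard Peaceman--Rachford update $\bY_h^{(k+1)} = (2J_B^\tau - I)(2J_A^\tau - I)\bY_h^{(k)}$, where $J_A^\tau$ and $J_B^\tau$ denote the resolvents.

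By Lemma~\ref{l:A_mono} the operator $A$ is maximal monotone, and $B$ is maximal monotone as the sum of a strongly monotone linear operator and the normal cone of a closed affine set; moreover, because any two points of $K$ differ by an element of $\mathbb V_{n,h}$ whereas the normal cone to $K$ lies in $\mathbb V_{n,h}^\perp$, $B$ is strongly monotone with constant $\tfrac{1}{n}$ on $K$. In this setting, the Lions--Mercier theorem~\cite{LM79} guarantees that $\bY_h^{(k)}$ converges (strongly, since we are in finite dimensions) to the unique fixed point $\bY^\star$, and consequently $\bT_h^{(k+1/2)} = J_A^\tau(\bY_h^{(k)}) \to J_A^\tau(\bY^\star)$. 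The fixed-point relation for the PR map is equivalent to $0 \in A(\bT^\star)+B(\bT^\star)$, which is precisely the weak formulation~\eqref{e:weak_regh,m=1}; hence $J_A^\tau(\bY^\star)=\bT^\star$, so both $\bT_h^{(k+1/2)}$ and $\bT_h^{(k)}$ converge to $\bT^\star$ in $L_2(\Omega)^{d\times d}_{\rm sym}$.

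The main obstacle is the careful operator identification --- particularly showing that $B$ (which carries the discrete divergence constraint through the Lagrange multiplier $\strain{\bu_h}$) is maximal and $\tfrac{1}{n}$-strongly monotone, and that its zero with $A$ corresponds exactly to the discrete solution $(\bT^\star,\bu^\star)$. A more elementary alternative, avoiding the abstract machinery, is to work directly from the error equations obtained by subtracting the fixed-point identities from Steps~1 and~2, observing that the constraint forces $e^{(k)} := \bT_h^{(k)}-\bT^\star\in\mathbb V_{n,h}$ at every integer step so that testing the sum of the two error equations against $e^{(k)}+e^{(k+1)}$ annihilates the Lagrange-multiplier contributions $\strain{\bu_h^{(k)}-\bu^\star}+\strain{\bu_h^{(k+1)}-\bu^\star}$. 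Combining the monotonicity of $\mathcal{M}$ at the half-step error $e^{(k+1/2)}$ with the identity $e^{(k+1)}-2e^{(k+1/2)}+e^{(k)} = \tau\strain{\bu_h^{(k+1)}-\bu_h^{(k)}} - \tfrac{\tau}{n}(e^{(k+1)}-e^{(k)})$ obtained by subtracting the error equations, and using Young's inequality carefully calibrated to exploit the strong coercivity term $\tfrac{1}{n}\|e^{(k)}+e^{(k+1)}\|^2$ and the quadratic differences $\|e^{(k+1)}-e^{(k+1/2)}\|^2$, $\|e^{(k+1/2)}-e^{(k)}\|^2$ extracted from separately testing (I) and (II) against these differences, delivers the non-expansivity $\|e^{(k+1)}\|_{L_2(\Omega)}^2 \le \|e^{(k)}\|_{L_2(\Omega)}^2 - c_1\|e^{(k)}+e^{(k+1)}\|_{L_2(\Omega)}^2 - c_2\|e^{(k+1)}-e^{(k)}\|_{L_2(\Omega)}^2$, from which $\|e^{(k)}\|_{L_2(\Omega)}\to 0$ follows by telescoping in $k$ and the parallelogram identity.
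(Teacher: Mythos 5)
Your first paragraph is essentially the paper's own argument: the paper explicitly "follows the general theory of Lions and Mercier," and its auxiliary tensors $\bTheta_h^{(k)}$ and $\bLambda_h^{(k)}$ are exactly the Peaceman--Rachford shadow variable $\bY_h^{(k)}$ (your notation) and its reflection $2\bT_h^{(k)}-\bY_h^{(k)}$. The difference is one of packaging only: you invoke the abstract Lions--Mercier convergence theorem as a black box (which does apply, since you correctly identify the operator $B = \tfrac{1}{n}I + N_K$ as maximal and $\tfrac{1}{n}$-strongly monotone on the constraint set), whereas the paper re-derives the needed inequalities by hand, namely the firm-nonexpansivity-type estimates \eqref{e:theta<lamb} and \eqref{e:tht-lam} and the polarization identity \eqref{e:b-bk}, and then telescopes.

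Your proposed ``elementary alternative'' in the second paragraph, however, is not substantiated and rests on an inequality that is likely false as stated. You assert $\|e^{(k+1)}\|_{L_2(\Omega)}^2 \le \|e^{(k)}\|_{L_2(\Omega)}^2 - c_1\|e^{(k)}+e^{(k+1)}\|_{L_2(\Omega)}^2 - c_2\|e^{(k+1)}-e^{(k)}\|_{L_2(\Omega)}^2$ with $e^{(k)}:=\bT_h^{(k)}-\bT_{n,1,h}$, but never derive it; combined with the parallelogram identity this would give $(1+2\min(c_1,c_2))\|e^{(k+1)}\|^2 \le (1-2\min(c_1,c_2))\|e^{(k)}\|^2$, i.e.\ \emph{geometric} contraction of the primal error --- far stronger than what the PR mechanism yields here (the normal-cone part of $B$ is not Lipschitz). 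In fact, the Fej\'er monotonicity the argument produces lives in the shadow variable, not in $\bT_h^{(k)}$: one gets $\|\bLambda_h^{(k+1)}-\bLambda_h\| \le \|\bTheta_h^{(k)}-\bTheta_h\| \le \|\bLambda_h^{(k)}-\bLambda_h\|$, and the primal error is recovered only through the identity \eqref{e:b-bk}, $\tfrac{1}{n}\|e^{(k)}\|_{L_2(\Omega)}^2 = \tfrac{1}{4\tau}\bigl(\|\bLambda_h^{(k)}-\bLambda_h\|_{L_2(\Omega)}^2 - \|\bTheta_h^{(k)}-\bTheta_h\|_{L_2(\Omega)}^2\bigr)$, whose right-hand side tends to zero because both norms converge (monotonically from above and squeezed from below) to the same limit. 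There is no a priori reason for $e^{(k)}$ itself to be monotone in $k$. If you want an elementary, self-contained argument, the paper's concrete route with $\bLambda_h^{(k)}$, $\bTheta_h^{(k)}$ is the one to carry out; your sketch would need the missing derivation and, as written, is not a correct shortcut.
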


\begin{proof}

The nonlinear part of the system  is represented by the following
operator, $\mathfrak A_h: \mathbb M_{n,h} \rightarrow \mathbb M_{n,h}$ defined by $\mathfrak A_h \bS_h = \bA_h$, where  for all $\bR_h \in \mathbb M_{n,h}$,
$$
\int_\Omega \bA_h  : \bR_h \dd \bx= \int_\Omega  \left( \lambda(\tr{\bS_h})\tr{\bS_h} \tr{\bR_h} + \mu(|(\bS_h^{\bd}|)\bS_h^{\bd} : \bR_h \right)\dd \bx,
$$
and the linear part,  excluding  the artificial time derivative, is represented by  the  function
$$\bB_h^{(k)} := \frac{1}{n} \bT_h^{(k)} - \strain{\bu_h^{(k)}}.$$
With these notations, the first step of the iterative algorithm reads
$$
(I+\tau \mathfrak A_h)\bT_h^{(k+\frac 1 2)} = \bT_h^{(k)} - \tau \bB_h^{(k)},
$$
or, equivalently,
$$
\bT_h^{(k+\frac 1 2)} =  (I+\tau \mathfrak A_h)^{-1} (\bT_h^{(k)} - \tau \bB_h^{(k)}).
$$
It is convenient to introduce  the following two auxiliary tensors:
\begin{equation}
\label{e:Lambd}
\bLambda_h^{(k)} := \bT_h^{(k)} + \tau \left(\frac{1}{n} \bT_h^{(k)} - \strain{\bu_h^{(k)}}\right) = \bT_h^{(k)} + \tau \bB_h^{(k)}
\end{equation}
and
$$
\bTheta_h^{(k)} := 2 \bT_h^{(k)} - \bLambda_h^{(k)},
$$
whereby
$$
\bT_h^{(k)} = \frac 1 2 ( \bTheta_h^{(k)} + \bLambda_h^{(k)} ).
$$
We shall see that the convergence of $\bT_h^{(k)}$ will result from  that of $\bLambda_h^{(k)}$ and $\bTheta_h^{(k)}$.  With these tensors,
the second step of the iterative algorithm reads
$$
\bLambda_h^{(k+1)} = (I-\tau \mathfrak A_h)\bT_h^{(k+\frac 1 2)} = (I-\tau \mathfrak A_h)(I+\tau \mathfrak A_h)^{-1} (\bT_h^{(k)} -  \tau \bB_h^{(k)}) .
$$
Notice that, from \eqref{e:Lambd}, $\bB_h^{(k)} = \frac 1 {2\tau} (\bLambda_h^{(k)} - \bTheta_h^{(k)})$, and we define $\bC_h^{(k)} := \frac 1 {2\tau}( \bTheta_h^{(k)} - \bLambda_h^{(k+1)})$.
In addition, we note for later that
$$
(I+\tau \mathfrak A_h)^{-1} \bTheta_h^{(k)} = \frac 1 2 (\bLambda_h^{(k+1)} + \bTheta_h^{(k)}),
$$
which implies \ that
$$
\mathfrak A_h\left( \frac{\bLambda_h^{(k+1)} + \bTheta_h^{(k)}}{2}\right) = \frac{1}{2\tau} (\bTheta_h^{(k)} - \bLambda_h^{(k+1)}) = \bC_h^{(k)}.
$$
We also define the analogous quantities
$$
\bB_h := \frac{1}{n} \bT_{n,1,h} - \strain{\bu_{n,1,h}}, \qquad \bLambda_h := \bT_{n,1,h} + \tau \bB_h, \qquad \bC_h:= \mathfrak A_h \bT_{n,1,h}, \qquad \bTheta_h = \bT_{n,1,h} + \tau \bC_h.
$$
With these notations, the first relation in \eqref{e:discrete_reg} reads
$$
\bC_h + \bB_h = \mathfrak A_h \bT_{n,1,h} + \frac{1}{n} \bT_{n,1,h} - \strain{\bu_{n,1,h}} = \mathbf 0,
$$
and so
$$
\bLambda_h + \bTheta_h = 2 \bT_{n,1,h} + \tau (\bB_h + \bC_h) = 2 \bT_{n,1,h},
$$
which in turn implies that
$$
\bC_h = \frac 1 \tau (\bTheta_h - \bT_{n,1,h}) = \frac 1 {2\tau} (\bTheta_h - \bLambda_h).
$$
Similarly, for $\bB_h$ we have the decomposition
$$
\bB_h = \frac{1}{2\tau} (\bLambda_h - \bTheta_h).
$$

We can now express the discrepancy between $\bT_{n,1,h}$ and $\bT_h^{(k)}$ as follows:
\begin{align*}
\frac{1}{n} \| \bT_h^{(k)} - \bT_{n,1,h} \|_{L_2(\Omega)}^2  &= \frac{1}{n} \int_\Omega (\bT_h^{(k)} - \bT_{n,1,h}): (\bT_h^{(k)} - \bT_{n,1,h}) \dd \bx\\
& = \int_\Omega (\bB_h^{(k)} - \bB_h): (\bT_h^{(k)} -\bT_{n,1,h}) \dd \bx
 +  \int_\Omega \strain{\bu_h^{(k)} - \bu_{n,1,h}} : (\bT_h^{(k)} - \bT_{n,1,h}) \dd \bx.
\end{align*}
Because, for all $\bv_h \in \mathbb X_{n,h}$,
$$
\int_\Omega \bT_h^{(k)} : \strain{\bv_h} \dd \bx = \int_\Omega \bef \cdot \bv_h =  \int_\Omega \bT_{n,1,h} : \strain{\bv_h} \dd \bx,
$$
we deduce that $\bT_h^{(k)} - \bT_{n,1,h} \in \mathbb V_{n,h}$, and therefore
$$
\frac{1}{n} \| \bT_h^{(k)} - \bT_{n,1,h} \|_{L_2(\Omega)}^2 = \int_\Omega (\bB_h^{(k)} - \bB_h): (\bT_h^{(k)} -\bT_{n,1,h}) \dd \bx.
$$
The relations
$$
\bB_h^{(k)} - \bB_h = \frac{1}{2\tau} \left( \bLambda_h^{(k)} - \bLambda_h - (\bTheta_h^{(k)} - \bTheta_h) \right)
$$
and
$$
\bT_h^{(k)} - \bT_{n,1,h} = \frac{1}{2} \left( \bLambda_h^{(k)} - \bLambda_h + (\bTheta_h^{(k)} - \bTheta_h) \right)
$$
further lead to
\begin{equation}\label{e:b-bk}
\frac{1}{n} \| \bT_h^{(k)} - \bT_{n,1,h} \|_{L_2(\Omega)}^2 = \frac{1}{4 \tau} \left(\| \bLambda_h^{(k)} - \bLambda_h \|_{L_2(\Omega)}^2 - \| \bTheta_h^{(k)} - \bTheta_h \|_{L_2(\Omega)}^2 \right).
\end{equation}
This, of course, implies that
\begin{equation}\label{e:theta<lamb}
 \| \bTheta_h^{(k)} - \bTheta_h \|_{L_2(\Omega)} \le  \| \bLambda_h^{(k)} - \bLambda_h \|_{L_2(\Omega)}.
\end{equation}
In addition, we have that
\begin{equation}\label{e:c_mon}
\begin{split}
& \int_\Omega (\bC_h^{(k)} -\bC_h) : \left( \frac{\bLambda_h^{(k+1)} + \bTheta_h^{(k)}}{2} - \bT_{n,1,h}\right) \dd \bx\\
& \qquad = \int_\Omega \left( \mathfrak A_h\left( \frac{\bLambda_h^{(k+1)} + \bTheta_h^{(k)}}{2}\right) - \mathfrak A_h \bT_{n,1,h} \right):  \left( \frac{\bLambda_h^{(k+1)} + \bTheta_h^{(k)}}{2} - \bT_{n,1,h}\right) \dd \bx \\
& \qquad  \geq 0,
\end{split}
\end{equation}
thanks to the monotonicity property of $\mathfrak A_h$ due to \eqref{e:mon_mu1} and \eqref{e:mon_lambda1}.
On the other hand, we compute
\begin{equation}\label{e:c-ck}
\int_\Omega (\bC_h^{(k)} -\bC_h) : \left( \frac{\bLambda_h^{(k+1)} + \bTheta_h^{(k)}}{2} - \bT_{n,1,h}\right) \dd \bx = \frac{1}{4\tau} \left( \| \bTheta_h^{(k)} - \bTheta_h \|_{L_2(\Omega)}^2 - \| \bLambda_h^{(k+1)} - \bLambda_h \|_{L_2(\Omega)}^2\right).
\end{equation}
Hence, we find that
\begin{equation}
\label{e:tht-lam}
\frac{1}{4\tau} \left( \| \bTheta_h^{(k)} - \bTheta_h \|_{L_2(\Omega)}^2 - \| \bLambda_h^{(k+1)} - \bLambda_h \|_{L_2(\Omega)}^2\right) \geq 0,
\end{equation}
and therefore, in view of \eqref{e:theta<lamb},
\begin{equation}
\label{e:tht-lam1}
\| \bLambda_h^{(k+1)} - \bLambda_h \|_{L_2(\Omega)} \leq   \| \bTheta_h^{(k)} - \bTheta_h \|_{L_2(\Omega)} \leq   \| \bLambda_h^{(k)} - \bLambda_h \|_{L_2(\Omega)}.
\end{equation}
This guarantees that the sequence $\| \bLambda_h^{(k)} - \bLambda_h \|_{L_2(\Omega)}$ of nonnegative real numbers is monotonic nonincreasing, and so converging.Furthermore, we have
$$
\lim_{k \to \infty} \| \bLambda_h^{k} - \bLambda_h \|_{L_2(\Omega)} = \lim_{k \to \infty} \| \bTheta_h^{k} - \bTheta_h \|_{L_2(\Omega)}.
$$
With these two limits, \eqref{e:b-bk} implies that
$$
\lim_{k \to \infty}  \frac{1}{n}\| \bT_h^{(k)} - \bT_{n,1,h} \|_{L_2(\Omega)} =0.
$$
That completes the proof.
\end{proof}

\begin{rem}[Post-processing]
\label{rm:convof uhk}
{\rm Since $\bT_h^{(k+\frac 1 2)}$ within the iterative algorithm does not satisfy the constraint,
it seems difficult to prove its convergence to $\bT_{n,1,h}$, and as a consequence the convergence of $\bu_h^{(k)}$ to $\bu_{n,1,h}$, as
$k \rightarrow \infty$.
Instead, given $\bT_h^{(k)}$, one can define $\widetilde{\bu}_h^{(k)} \in \mathbb X_{n,h}$ as the solution to the elasticity problem
$$
\int_\Omega \strain{\widetilde{\bu}_h^{(k)}}:\strain{\bv_h} \dd \bx = \frac{1}{n} \int_\Omega \bT_h^{(k)} : \strain{\bv_h} \dd \bx + \int_\Omega \mathfrak A_h(\bT_h^{(k)}) :
\strain{\bv_h}\dd \bx\qquad  \forall\, \bv_h \in \mathbb X_{n,h}.
$$
The convergence of $\widetilde{\bu}_h^{(k)}$ towards $\bu_{n,1,h}$ follows from the convergence of $\bT_h^{(k)}$ towards $\bT_{n,1,h}$, as $k \rightarrow \infty$.}
\end{rem}

%--------------------

%\Aggie{
\section{Numerical Experiments}
\label{sec:experiment}

We now illustrate the performance of the decoupled algorithm in several situations.
We start with a setting where the exact solution is accessible, in order to demonstrate the asymptotic behavior of the algorithm and to determine adequate values for the numerical parameters to be used in other situations.
We then challenge our algorithm in the two-dimensional case of a crack.

The numerical results presented below are obtained using the \emph{deal.ii} library \cite{dealII85}.
The subdivisions of $\Omega$ consist of quadrilaterals/hexahedra.
Unless stated otherwise, the stress tensor $\bT$ is approximated using piecewise constant polynomials while the displacement $\bu$ is approximated by piecewise polynomials of degree one in each co-ordinate direction; see Section~\ref{subsec:otherfit}.

%-----------------------

\subsection{Details of the Implementation}
\label{subsec:implement}
For a given tolerance parameter \textrm{TOL}$>0$, the decoupled iterative algorithm described in Section~\ref{s:decoupled} is terminated once the relative tolerance on the increment
\begin{equation}\label{e:tolerance}
\frac{\| \bT_{h}^{(k+1)} - \bT_{h}^{(k)} \|_{L_p(\Omega)} + \| \nabla  (\bu_{h}^{(k+1)} - \bu_{h}^{(k)}) \|_{L_{2}(\Omega)}}{\| \bT_{h}^{(k)}\|_{L_p(\Omega)} + \| \nabla \bu_{h}^{(k)}\|_{L_2(\Omega)}} \leq \textrm{TOL}
\end{equation}
is satisfied, where $p=2$ when $t=1$ and $p=1$ otherwise.

Each step of the decoupled algorithm requires subiterations (only step 1 when $t=1$), which are terminated once the relative tolerance on the increments is smaller than $ \textrm{TOL/5}$.

%----------------
\subsection{Validation on Smooth Solutions}
\label{subsec:validation}

We illustrate the performance of the decoupled algorithm introduced in Section~\ref{s:decoupled} on the discretization of
the regularized system
\begin{alignat}{2}\label{e:weak_reg_mod}
%\begin{split}
\begin{aligned}
\quad a_n(\bT,\bS)+c(\bT;\bT,\bS)-b(\bS,\bu) &= \int_{\Omega} \bG:\bS \dd\bx \qquad &&\forall\, \bS \in\mathbb M,\\
\quad b(\bT,\bv) &= \int_\Omega \bef \cdot \bv \dd\bx \qquad &&\forall\, \bv \in \mathbb X.
%\end{split}
\end{aligned}
\end{alignat}
The presence of the given tensor $\bG:\Omega \rightarrow \mathbb R^{d\times d}_\textrm{sym}$ on
the right-hand side of the first equation allows us to exhibit an exact solution in closed form;
compare with \eqref{e:weak_reg}. In fact, we let
$\lambda(s) = \mu(s) = (1+s^2)^{-\frac{1}{2}}$, $\Omega = (0,1)^2$ and, given $n \geq 1$,
we define $\bef$ and $\bG$ so that
\begin{equation}\label{e:exact_closed}
\bu(x,y) = \begin{pmatrix} y(1-y) \\ 0 \end{pmatrix}, \qquad \bT(x,y) = \begin{pmatrix} {\rm e}^x & 0 \\ 0 & \cos y  \end{pmatrix}
\end{equation}
solves \eqref{e:weak_reg_mod}.

Regarding the numerical parameters, we fix the pseudo-time increment parameter $\tau = 0.01$ and perform simulations for several values of the regularization parameter $n$ and for $t=1$ (linear regularization) and $t=n$.
The computational domain $\Omega$ is subdivided by using a sequence of uniform partitions consisting of squares of diameter $h=2^{-i}$, $i=0,\dots,7$.
The target tolerance for the iterative algorithm is set to $\textrm{TOL} = 10^{-5}$.

\subsubsection*{Convergence as $h \to 0$}
We provide in Table~\ref{table:conv_smooth} the corresponding errors $e_\bu:= \| \nabla (\bu_n - \bu_{n,h})\|_{L_2(\Omega)}$ and $e_\bT:= \| \bT_n - \bT_{n,h}\|_{L_p(\Omega)}$.
Theorem~\ref{thm:err.inequ} predicts a rate of convergence of $O(h^{\frac{1}{t}})$ for both quantities  which seems to be pessimistic (in this model problem
with a smooth solution) since convergence of order $O(h)$ is observed for $t=1$ and $t=n$.
In fact, we also ran tests with other values of $t>1$ and observed the same order $O(h)$.

\begin{table}[ht!]
\begin{tabular}{c?c|c?c|c|c|c?}
$h$   & \multicolumn{2}{c?}{$n = 1$}  & \multicolumn{4}{c?}{$n=2$}  \\
  \Xhline{2\arrayrulewidth}
  &      \multicolumn{2}{c?}{$t=1$} &   \multicolumn{2}{c|}{$t=1$} &   \multicolumn{2}{c?}{$t=2$} \\
%\cline{2-7}
   &     $e_\bu$ &  $e_\bT$ &   $e_\bu$ &  $e_\bT$   &  $e_\bu$ &  $e_\bT$  \\
\cline{2-7}
 $2^{-2}$ &0.14438 & 0.03946	& 0.14436 &  0.05453 & 0.14434 &  0.05182  \\
 $2^{-3}$ &0.07217 & 0.01973	& 0.07217 &  0.02725 & 0.07217  & 0.02486 \\
 $2^{-4}$ &0.03609 & 0.00986	& 0.03609 &  0.01363 & 0.03609  & 0.01224 \\
 $2^{-5}$ &0.01804 & 0.00493	& 0.01804 &  0.00681 & 0.01804  & 0.00625 \\
 $2^{-6}$ &0.00902 & 0.00247 	& 0.00902 &  0.00341 & 0.00902  & 0.00327 \\
 $2^{-7}$ &0.00451 & 0.00124 	& 0.00451 &  0.00171 & 0.00451  & 0.00177 \\
\end{tabular}

\bigskip

\caption{Asymptotic behaviour of $e_\bu$ and $e_\bT$ for $n=t=1$ and $n=2$ with $t=1$ or $t=2$.
The method exhibits convergence of order one in all cases.
This is in accordance with Theorem~\ref{thm:err.inequ} when $t=1$ but better than predicted for $t>1$.
}\label{table:conv_smooth}
\end{table}

\subsubsection*{Convergence as $n \to \infty$}
We now turn our attention to the convergence of the algorithm when $n \to \infty$ for a fixed subdivision corresponding to $h=2^{-7}$.
Again, we consider two cases: $t=1$ (linear regularization) and $t=n$.
The data $\bef$ and $\bG$ are modified so that $(\bu,\bT)$ given by \eqref{e:exact_closed} solves \eqref{e:weak_reg_mod} without regularization, i.e., without the bilinear form $a_n(\cdot,\cdot)$.
The results are reported in Table~\ref{tab:conv_delta}; they indicate that in this smooth setting, $e_\bu+e_\bT \to 0$ as $n \to \infty$.
\begin{table}[ht!]
\begin{tabular}{c|cccc}
\multicolumn{4}{c}{$t=1$}\\
              & $n = 1.0$ &$n = 500$  & $n=1000$  \\
              \hline
$e_\bu$ &  0.80168   & 0.00927 & 0.00617   \\
$e_\bT$ & 1.53397   & 0.06777 &  0.03583
\end{tabular}\qquad
\begin{tabular}{c|cccc}
\multicolumn{4}{c}{$t=n$}\\
              & $n = 1.0$ & $n=500$ & $n=1000$  \\
              \hline
$e_\bu$ &  0.80167   & 0.00519  & 0.00470  \\
$e_\bT$ & 2.18173   &  0.04052 & 0.02234
\end{tabular}

\bigskip

\caption{Convergence of the decoupled algorithm when $n \to \infty$ for a fixed spatial
resolution ($h=2^{-7}$) using linear ($t=1$) and nonlinear ($t=n$) regularization.
In the nonlinear regularization case, the error in the stress is always measured in $L_1(\Omega)$ (instead of $L_2(\Omega)$ when $t=1$).
The two algorithms yield similar results.
}\label{tab:conv_delta}
\end{table}

\subsection{Inf-Sup condition}

We conclude the section containing our numerical experiments with an observation on the inf-sup condition
when using quadrilaterals. We consider the discretization of the linear problem, for which the solution
$(\bu,\bT) \in \mathbb X \times \mathbb M$ is defined as the one satisfying
$$
\int_\Omega \bT:\bS - \int_\Omega \strain{\bu}:\bS + \int_\Omega \strain{\bv}:\bT= \int_\Omega \bef \cdot \bv \qquad \forall\, (\bv,\bS) \in \mathbb X \times  \mathbb M.
$$

In view of the discussion in Section~\ref{subsec:otherfit}, any pair of discrete spaces satisfying $\strain{\mathbb X _{n,h}} \subset \mathbb M_{n,h}$, such as in \eqref{e:pair_QQ} or in \eqref{e:pair_PP}, yields an inf-sup stable scheme.
In contrast, unstable modes (that violate the discrete inf-sup condition with an $h$-independent positive inf-sup constant) can be proved to exist when using the pair in \eqref{e:Q1--P0}.
However, for the exact (smooth) solution
\begin{equation*}
\bu(x,y) = \begin{pmatrix} x\,{\rm e}^y \\ \sin x \end{pmatrix}, \qquad \bT(x,y) = \strain{\bu(x,y)}
\end{equation*}
on a square domain $\Omega=(0,1)^2$, the finite element approximations  using this unstable pair showed no signs
of instability in our numerical experiments.
In fact a linear rate of convergence for $\| \nabla(\bu-\bu_h) \|_{L_2(\Omega)}$ and $\| \bT-\bT_h\|_{L_1(\Omega)}$ was observed in the limit of $h \rightarrow 0$; see Figure~\ref{f:infup}.

\begin{figure}
\includegraphics[scale=0.6]{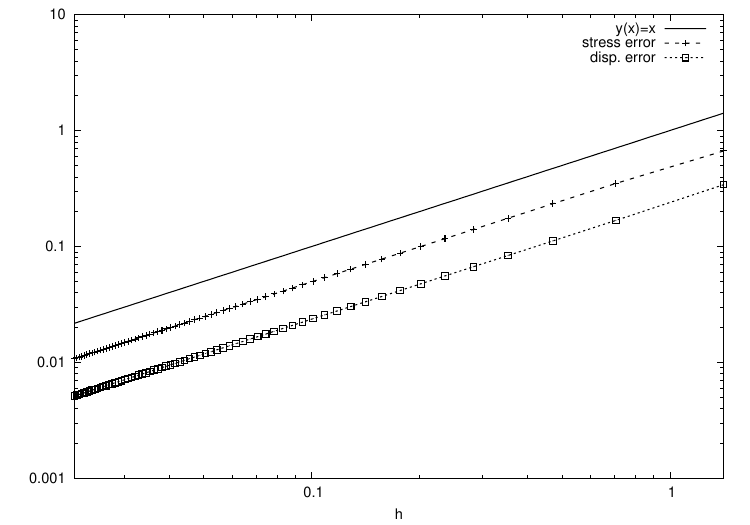}

\caption{Decay of $\| \bu-\bu_h \|_{L_2(\Omega)}$ and $\| \bT-\bT_h\|_{L_1(\Omega)}$ as a function of the mesh-size $h$ using the unstable pair in  \eqref{e:Q1--P0}. Both quantities decay linearly.}\label{f:infup}
\end{figure}

It is worth mentioning that, when using $(\mathcal Q^1_h)^{d \times d}_{\rm sym}$ instead of
$(\mathcal Q^0_h)^{d \times d}_{\rm sym}$  for $\bT_h$, the approximation of $\bu_h$ remains
exactly the same while the approximation of $\bT_h$ is more accurate on any given subdivision,
but it still only exhibits first-order convergence as $h \rightarrow 0$.
The intriguing fact that, for the exact solution $(\bu, \bT)$ considered above, the scheme
exhibits the  optimal rate of convergence dictated by interpolation theory, even though an
inf-sup unstable finite element pair is being used, will be the subject to future work.

\subsection{Crack problem}
\label{subsec:crackpb}

We consider the ``crack problem'' described in Figure~\ref{s:num:crack_setting}.
A horizontal force of magnitude $f $  is applied to the right face of the domain (III),
while the left faces (I and II) are free to deform (i.e.,  no external force is being applied there).
The  top and bottom (IV) are fixed with $\bu = \mathbf 0$.

\begin{figure}
\begin{picture}(0,0)%
\includegraphics{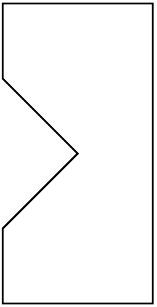}%
\end{picture}%
\setlength{\unitlength}{3947sp}%
\begingroup\makeatletter\ifx\SetFigFont\undefined%
\gdef\SetFigFont#1#2#3#4#5{%
  \reset@font\fontsize{#1}{#2pt}%
  \fontfamily{#3}\fontseries{#4}\fontshape{#5}%
  \selectfont}%
\fi\endgroup%
\begin{picture}(1244,2444)(2379,-2783)
\put(2200,-300){$(0,2)$}
\put(3500,-300){$(1,2)$}
\put(2000,-1100){$(0,\frac 3 2)$}
\put(2000,-2100){$(0,\frac 1 2)$}
\put(3100,-1600){$\!\!(\frac 1 2,1)$}
\put(2000,-2800){$(0,0)$}
\put(3650,-2800){$(1,0)$}
\put(2200,-700){$I$}
\put(2200,-2500){$I$}
\put(2700,-1600){$II$}
\put(3700,-1600){$III$}
\put(2900,-600){$IV$}
\put(2900,-2700){$IV$}
\end{picture}%

\caption{Crack problem. A horizontal compressive force $\bT \bn = (f,0)^{\rm T}$ for $f>0$ is applied on the side $III$, while no force (i.e.,  $\bT \bn = \mathbf 0$)
is imposed on the side marked by $I$ and $II$. The top and bottom sides are fixed, i.e.,  $\bu = \mathbf 0$.} \label{s:num:crack_setting}
\end{figure}

We set $\lambda(s)=\mu(s) = (1+s^2)^{-\frac 1 2}$. In view of the performance observed in Section~\ref{subsec:validation}, we set the numerical parameters at $\tau=2$, $n=100$, and $t=1$.
The domain is partitioned into 16384 quadrilaterals  of minimal diameter $h=0.011$.
The stress is approximated in $(\mathcal Q^0_h)^{d \times d}_{\rm sym}$ and the displacement in $(\mathcal Q^1_h)^{d} \cap \mathbb X_n$.
In Figure~\ref{fig:uVSf}, we provide the deformed domain predicted by the algorithm for different values of $f$.
We also report in Table~\ref{tab:uTvsf} the evolution of $\| \nabla \bu_h \|_{L_\infty(\Omega)}$ and $\| \bT_h \|_{L_\infty(\Omega)}$ as the
magnitude of the force increases.
The influence of the latter is severe on $\| \bT_h \|_{L_\infty(\Omega)}$ while relatively moderate on $\| \strain{\bu_h} \|_{L_\infty(\Omega)} \leq \| \nabla \bu_h \|_{L_\infty(\Omega)}$. This is in accordance with the properties of the strain-limiting model considered.

\begin{figure}[ht!]
\includegraphics[width=0.9\textwidth]{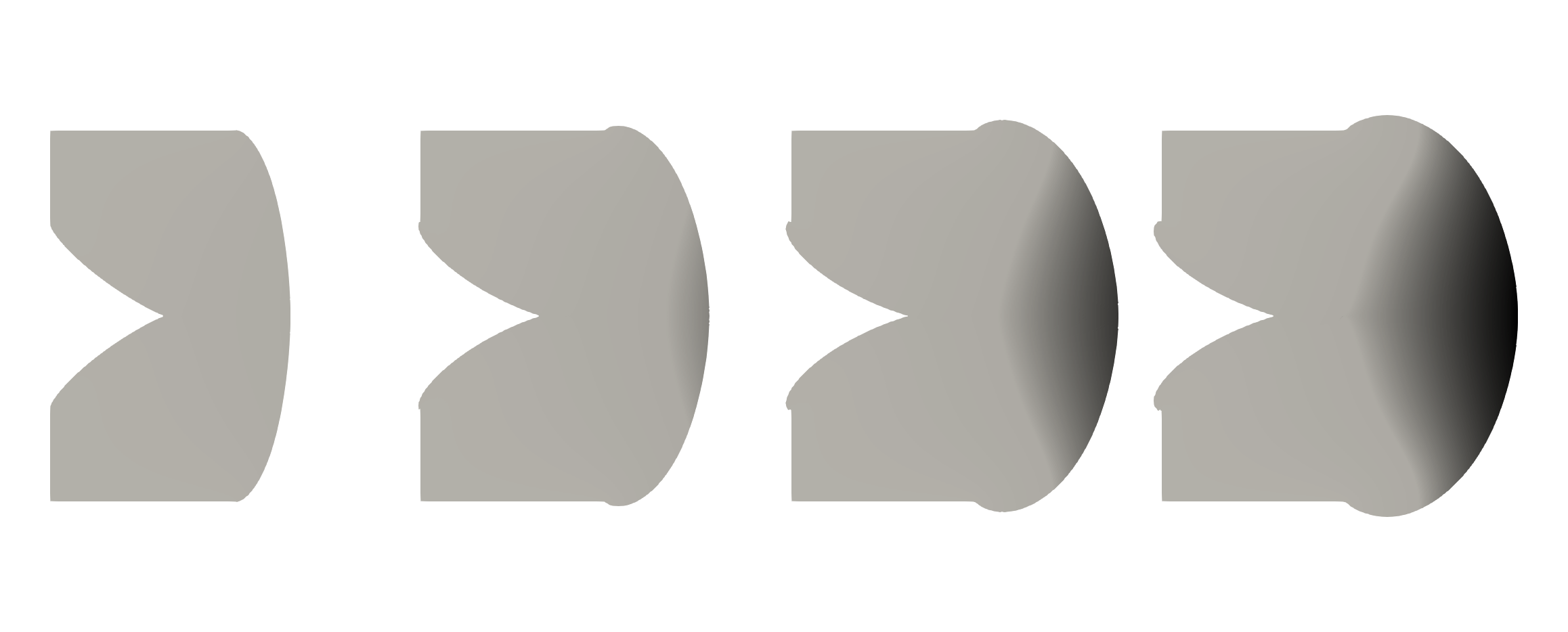}
\caption{Crack problem. The deformed domain for different force-magnitudes $f=0.25,0.5,0.75,1$ (from left to right) pulling the right face of the computational domain.
The gray scale describes the magnitude of the displacement $| \bu|$, where white corresponds to $0$ and black to $0.92$.}\label{fig:uVSf}
\end{figure}
\begin{table}[ht!]
\begin{tabular}{c|c|c|c|c|c|c}
   & $f=0.25$ & $f=0.5$ & $f=0.75$ & $f=1$ & $f=1.25$ & $f=1.5$ \\
\hline
$\| \nabla \bu_{n,1,h} \|_{L_\infty(\Omega)}$ &  1.0656 & 2.2510  & 3.5032 & 5.2703 & 7.0492  & 8.8003 \\
$\| \bT_{n,1,h} \|_{L_\infty(\Omega)}$    & 0.92231 & 5.3090  &  18.17  & 46.5215  & 95.3902   & 166.335
\end{tabular}

\bigskip

\caption{Evolutions of $\| \nabla \bu_h \|_{L_\infty(\Omega)}$ and $\| \bT_h \|_{L_\infty(\Omega)}$
as functions of the force-magnitude $f$ pulling the right face of the domain.
The influence of increasing the magnitude of the force is severe on the stress while relatively moderate on the strain. This is in accordance with the properties of the
strain-limiting model considered.}\label{tab:uTvsf}
\end{table}


\begin{thebibliography}{10}

\bibitem{AcostaDuranMuschietti2006}
{\sc G.~Acosta, R.~G. Dur{\'a}n, and M.~A. Muschietti}, {\em Solutions of the
  divergence operator on {J}ohn domains}, Adv. Math., 206 (2006), pp.~373--401.

\bibitem{dealII85}
{\sc D.~Arndt, W.~Bangerth, D.~Davydov, T.~Heister, L.~Heltai, M.~Kronbichler,
  M.~Maier, J.-P. Pelteret, B.~Turcksin, and D.~Wells}, {\em The
  \texttt{deal.II} library, version 8.5}, Journal of Numerical Mathematics, 25
  (2017), pp.~137--146.

\bibitem{ref:BBMS}
{\sc L.~Beck, M.~Bul\'{\i}\v{c}ek, J.~M\'{a}lek, and E.~S\"{u}li}, {\em On the
  existence of integrable solutions to nonlinear elliptic systems and
  variational problems with linear growth}, Archive for Rational Mechanics and
  Analysis, 225 (2017), pp.~717--769.

\bibitem{BMRS14}
{\sc M.~{Bul\'\i\v cek}, J.~M\'alek, K.~Rajagopal, and E.~S\"uli}, {\em On
  elastic solids with limiting small strain: modelling and analysis}, EMS Surv.
  Math. Sci., 1 (2014), pp.~283--332.

\bibitem{Cia}
{\sc P.~G. Ciarlet}, {\em Basic error estimates for elliptic problems}, in
  Handbook of {N}umerical {A}nalysis, {V}ol.\ {II}, Handb. Numer. Anal., II,
  North-Holland, Amsterdam, 1991, pp.~17--351.

\bibitem{ref:GiR}
{\sc V.~Girault and P.-A. Raviart}, {\em Finite {E}lement {M}ethods for
  {N}avier-{S}tokes {E}quations: {T}heory and {A}lgorithms}, vol.~5 of Springer
  Series in Computational Mathematics, Springer-Verlag, Berlin, 1986.

\bibitem{ref:Grafakos}
{\sc L.~Grafakos}, {\em Classical {F}ourier {A}nalysis}, vol.~249 of Graduate
  Texts in Mathematics, Springer, New York, third~ed., 2014.

\bibitem{Gron}
{\sc T.~H. Gronwall}, {\em \"{U}ber die {L}aplacesche {R}eihe}, Math. Ann., 74
  (1913), pp.~213--270.

\bibitem{Jiang}
{\sc R.~Jiang and A.~Kauranen}, {\em Korn's inequality and {J}ohn domains},
  Calc. Var. Partial Differential Equations, 56 (2017), pp.~Art. 109, 18.

\bibitem{ref:Lions}
{\sc J.-L. Lions}, {\em Quelques {M}\'ethodes de {R}\'esolution des
  {P}robl\`emes aux {L}imites {N}on {L}in\'eaires}, Dunod, Paris, France, 1969.

\bibitem{LM79}
{\sc P.-L. Lions and B.~Mercier}, {\em Splitting algorithms for the sum of two
  nonlinear operators}, SIAM J. Numer. Anal., 16 (1979), pp.~964--979.

\bibitem{MNRR96}
{\sc J.~M\'alek, J.~{Ne\v cas}, M.~Rokyta, and M.~{Ru\v zi\v cka}}, {\em Weak
  and {M}easure-valued {S}olutions to {E}volutionary {PDE}s}, vol.~13 of
  Applied Mathematics and Mathematical Computation, Chapman \& Hall, London,
  1996.

\bibitem{Raj2003}
{\sc K.~R. Rajagopal}, {\em On implicit constitutive theories}, Applications of
  Mathematics, 48 (2003), pp.~279--319.

\bibitem{Raj2007}
\leavevmode\vrule height 2pt depth -1.6pt width 23pt, {\em The elasticity of
  elasticity}, Zeitschrift angew. Math. Phys., 58 (2007), pp.~309--317.

\bibitem{RR}
{\sc M.~M. Rao and Z.~D. Ren}, {\em Applications of {O}rlicz {S}paces},
  vol.~250 of Monographs and Textbooks in Pure and Applied Mathematics, Marcel
  Dekker, Inc., New York, 2002.

\bibitem{ScoZha}
{\sc L.~R. Scott and S.~Zhang}, {\em Finite element interpolation of nonsmooth
  functions satisfying boundary conditions}, Mathematics of Computation, 54
  (1990), pp.~483--493.

\bibitem{ref:Sch}
{\sc R.~E. Showalter}, {\em Monotone {O}perators in {B}anach {S}paces and
  {N}onlinear {P}artial {D}ifferential {}Equations}, vol.~49 of Math. Surveys
  and Monographs, AMS, Providence, R.I., 1997.

\end{thebibliography}
\end{document}